\newtheorem{theorem}{Theorem}[section]
\newtheorem{corollary}[theorem]{Corollary}
\newtheorem{lemma}[theorem]{Lemma}
\newtheorem{proposition}[theorem]{Proposition}
\newtheorem{problem}[theorem]{Problem}
\newtheorem{example}[theorem]{Example}
\newtheorem{remark}[theorem]{Remark}
\newtheorem{defi}[theorem]{Definition}
\begin{document}

\title[Gordian complexes of knots and virtual knots]{Gordian complexes of knots and virtual knots given by region crossing changes and arc shift moves}

\author{AMRENDRA~GILL}
\address{Department of Mathematics, Indian Institute of Technology Ropar, India} 
\email{amrendra.gill@iitrpr.ac.in}

\author{MADETI~PRABHAKAR}
\address{Department of Mathematics, Indian Institute of Technology Ropar, India} 
\email{prabhakar@iitrpr.ac.in}

\author{ANDREI~VESNIN}
\address{Tomsk State University, Tomsk, 634050, Russia \\ and Sobolev Institute of Mathematics, Novosibirsk, 630090, Russia} 
\email{vesnin@math.nsc.ru} 

\keywords{Gordian complex, virtual knot, local moves }

\subjclass[2010]{57M25, 57M27}

\thanks{The first and second authors were supported by DST-RSF Pro\-ject DST/INT/RUS/RSF/P-02.
The third author was supported in part by the Russian Scientific Foundation (grant number RSF-19-41-02005). }

\begin{abstract}
Gordian complex of knots was defined by Hirasawa and Uchida as the simplicial complex whose vertices are knot isotopy classes in $\mathbb{S}^3$. Later Horiuchi and Ohyama defined Gordian complex of virtual knots using $v$-move and forbidden moves. In this paper we discuss Gordian complex of knots by region crossing change and Gordian complex of virtual knots by arc shift move. Arc shift move is a local move in the virtual knot diagram which results in reversing orientation locally between two consecutive crossings. We show the existence of an arbitrarily high dimensional simplex in both the Gordian complexes, i.e., by region crossing change and by the arc shift move. For any given knot (respectively, virtual knot) diagram we construct an infinite family of knots (respectively, virtual knots) such that any two distinct members of the family have distance one by region crossing change (respectively, arc shift move). We show that that the constructed virtual knots have the same affine index polynomial. 
\end{abstract}

\maketitle

\section{Introduction}
Gordian complex of knots was defined by M.~Hirasawa and Y.~Uchida~\cite{a} as the simplicial complex whose vertices are knot isotopy classes in $\mathbb{S}^3$. 
A natural question arises that, can in some meaningful way notion of an $n$-simplex spanned by $n+1$ knots $K_0, K_1,\ldots, K_n$ be defined? It is here that the idea of unknotting operations in knots is well utilized to define notion of $n$-simplex spanned by knots $K_0, K_1,\ldots, K_n$. 
An unknotting operation is a local change in knot diagram which can be repeatedly used to convert any knot diagram into a trivial knot diagram. For any two knots $K$ and $K'$, it is always possible to find diagrams $D, D'$ of $K, K'$ respectively (we write $D \sim K$ and $D' \sim K'$) such that $D$ can be converted into $D'$ using finite number of unknotting operations or their inverses. 

For an unknotting operation, say $\gamma$, and any two knots $K, K'$ consider the collection of pairs of knots $\Lambda= \{(D_\alpha, D_\beta)\vert D_\alpha\sim K, D_\beta \sim K' \text{ and } D_\alpha$  can be converted into  $D_\beta$  using finitely many  operations $\gamma$ or its inverse$\}$. For any pair of knots $(D_\alpha, D_\beta)$ in $\Lambda$, let $d_{\gamma}(D_\alpha, D_\beta)$ denotes the minimal number of $\gamma$ operations required to convert $D_\alpha$ into $D_\beta$.  The \emph{$\gamma$-distance} between two knots $K $ and $K'$, denoted by $d_{\gamma}(K,K')$, is defined as minimum of all $d_{\gamma}(D_\alpha, D_\beta)$ such that $(D_\alpha, D_\beta)\in \Lambda$. Crossing change, which involves changing the crossing information at a crossing from over/under to under/over is well known to be an unknotting operation for classical knots. H. Murakami \cite{k} defined Gordian distance $d_{G}(K,K')$ between a pair of knots $K$ and $K'$ as the minimal number of crossing changes required to transform $K$ into $K'$. Further, using specific pairs of knots having Gordian distance one, notion of Gordian complex for knots was defined by M.~Hirasawa and Y.~Uchida \cite{a} as follows:

\begin{defi} \cite{a}
The \emph{Gordian complex} $\mathcal{G}$ of knots is a simplicial complex defined by the following:
\begin{enumerate}
\item The vertex set of $\mathcal{G}$ consists of all the isotopy classes of oriented knots in $\mathbb{S}^3$, and
\item A family of n+1 vertices $\{K_0 ,K_1, \ldots, K_n\}$ spans an n-simplex if and only if the Gordian distance $d_{G}(K_i,K_j)=1$ for $0\leq i, j \leq n$, $i \neq j$.
\end{enumerate}
\end{defi}

Moreover, they studied the structure of simplexes in $\mathcal{G}$ and proved following results.

\begin{theorem} \cite{a} 
For any $1$-simplex $e$ of the Gordian complex, there exists an infinitely high dimensional simplex $\sigma$ such that $e$ is contained in $\sigma$.
\end{theorem}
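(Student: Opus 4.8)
The plan is to exhibit $\sigma$ concretely as the simplex spanned by an infinite family of pairwise distinct knots $K_0,K_1,K_2,\dots$ in which $K_0,K_1$ are the two endpoints of the given $1$-simplex $e$ and $d_G(K_i,K_j)=1$ for all $i\neq j$; so the task is to manufacture $K_2,K_3,\dots$. One should first observe that a connected-sum family $K_0\#J_n$ with $u(J_n)=1$ will not do: it contains $K_0$ but has no reason to contain the prescribed partner $K_1$, since $K_1$ need not have $K_0$ as a connected summand. The construction must therefore be performed locally, at the crossing where $K_0$ and $K_1$ already differ.

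To do this I would fix diagrams $D_0\sim K_0$, $D_1\sim K_1$ and a disk $B$ with $D_0=D_1$ outside $B$ and differing by a single crossing change at a crossing $p$ inside $B$. Enlarge $B$ slightly to a disk $B'$ meeting the knot in two short arcs crossing once at $p$, and for $n\ge 2$ replace the tangle inside $B'$ by a new $2$-string tangle $T_n$: the crossing $p$ decorated with a clasped, knotted band carrying a knot $J_n$ drawn from a family $\{J_n\}_{n\ge 2}$ chosen below (with $T_0,T_1$ the two undecorated single-crossing tangles that produce $K_0,K_1$). I would arrange the band so that three single crossing changes are available inside $B'$: changing the clasp crossing retracts the band and leaves the crossing $p$, recovering $K_0$; changing $p$ itself both switches it and frees the band to retract, recovering $K_1$; and changing one crossing inside the knotted part of the band turns $J_n$ into $J_m$. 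Each such move is supported in $B'$ and disjoint from the rest of the diagram, so it realizes $d_G(K_i,K_j)\le 1$ for the corresponding pair; hence, as soon as the family $\{J_n\}$ itself satisfies $d_G(J_i,J_j)=1$ for all $i\neq j$, all of $K_0,K_1,K_2,\dots$ are pairwise at Gordian distance at most one.

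Two things then remain. First, one must produce one infinite family $\{J_n\}$ of distinct knots pairwise at Gordian distance one; this has to be argued directly, not via the theorem, and it is the technical heart. The key point is that a single crossing change must be able to act with unbounded effect, so a plain twist region (where one crossing change alters the twisting by a single unit and only ``consecutive'' relations arise) will not serve; instead one exhibits an explicit family — for instance knots assembled from $n$ clasps arranged so that one well-placed crossing change collapses an arbitrarily long initial block of them at once — and verifies on the diagrams that any two members are one crossing change apart. Second, to upgrade ``at most one'' to ``exactly one'', one shows the $K_n$ are pairwise distinct by tracking an invariant the construction inflates in a controlled way, say the Seifert genus or the breadth of the Alexander polynomial, with $\{J_n\}$ chosen so that this quantity is strictly increasing in $n$; since the inserted band contributes to it additively up to a bounded error, the $K_n$ acquire pairwise distinct invariants and are in particular distinct knots.

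The hard part will be exactly this pair of tasks, together with the fine structure of $T_n$: one must realize all three reductions above — in particular that switching the original crossing $p$ simultaneously frees the band to retract, which pins down precisely how the band has to be clasped onto the under-arc of $p$ — and one must find an explicit pairwise-distance-one family and carry out its verification. The remaining ingredients are routine: a local tangle replacement never increases Gordian distance, and the invariant computation separating the $K_n$ is a direct calculation. (This is, in essence, the argument of \cite{a}.)
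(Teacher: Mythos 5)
You are asked to prove a statement that this paper does not actually prove: it is quoted from Hirasawa--Uchida~\cite{a}, and the present paper only cites it. The closest arguments in the paper are the proofs of Theorem~\ref{thm1} and Theorem~\ref{1}, whose blueprint is instructive for comparison: a single explicit one-parameter family of diagrams built from $m$ repeated blocks, arranged so that one local move performed in block $i$ collapses everything to one side and yields the $(i-1)$-st member of the same family (so all pairwise distance bounds come from one diagram family, with no auxiliary grafting), followed by an explicit skein or tangle recursion (Kawauchi's $c_0$, respectively the Kauffman bracket with the matrix $\mathcal{A}$) showing that the maximal degree of a polynomial invariant grows strictly with $m$, which gives distinctness. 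Your outline has the right general shape for the $1$-simplex statement, and your opening observation is correct and important: a connected-sum family cannot contain a prescribed second vertex $K_1$, so the construction must be made locally at the crossing where $K_0$ and $K_1$ differ; this is indeed the idea behind the cited result.

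However, as written the proposal defers exactly the substantive content, so there is a genuine gap rather than a proof. First, the infinite family $\{J_n\}$ with $d_G(J_i,J_j)=1$ for all $i\neq j$ is only gestured at (``$n$ clasps collapsed by one well-placed crossing change''); producing and verifying such a family is essentially the Corollary of the very theorem being proved, and you acknowledge it must be built directly --- but you do not build it, so the heart of the argument is a placeholder. Second, the tangle $T_n$ is specified only by the properties you need it to have; in particular the claim that switching the original crossing $p$ simultaneously reverses it and frees the clasped band to retract (recovering $K_1$) is the one genuinely delicate piece of diagrammatic design, and it is asserted, not exhibited. Third, the distinctness step is unsupported: ``the inserted band contributes additively up to a bounded error'' to Seifert genus or Alexander breadth is not a general fact for this kind of band/tangle insertion, and without an explicit computation you cannot rule out coincidences among the $K_n$, or between some $K_n$ and $K_0$ or $K_1$. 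The paper's analogous proofs show what is required here: an explicit recursive computation forcing a degree (of $c_0$ in Theorem~\ref{thm1}, of the $f$-polynomial in Theorem~\ref{1}) to increase strictly in $n$. Until the family $\{J_n\}$, the precise tangles $T_n$, and the invariant computation are actually carried out, what you have is a credible plan for the Hirasawa--Uchida argument, not a proof of the statement.
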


\begin{corollary} \cite{a} 
For any knot $K_0$, there exists an infinite family of knots $K_0, K_1, K_2, \ldots$ such that the Gordian distance $d_{G}(K_i,K_j)=1$, for any $i\neq j$.
\end{corollary}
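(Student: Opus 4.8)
The plan is to obtain the statement as a direct consequence of the preceding theorem, so essentially all the work is to exhibit a single $1$-simplex of $\mathcal{G}$ having $K_0$ as a vertex.

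First I would produce a knot $K_1$ with $d_G(K_0,K_1)=1$ and $K_1\ne K_0$. A convenient choice is $K_1 := K_0 \,\#\, 3_1$, the connected sum of $K_0$ with a trefoil: form a diagram $D_1$ of $K_1$ by placing a diagram of $K_0$ beside the standard three-crossing trefoil diagram and joining them with a band, and then change the crossing of $D_1$ that realizes the unknotting number $1$ of the trefoil. This turns the trefoil summand into an unknot, hence gives a diagram of $K_0\,\#\,(\text{unknot})=K_0$, so $d_G(K_0,K_1)\le 1$. Since $K_1\ne K_0$ — for instance $\Delta_{K_1}(t)=\Delta_{K_0}(t)\,(t-1+t^{-1})\ne\Delta_{K_0}(t)$, or one may compare Seifert genera or bridge numbers — we conclude $d_G(K_0,K_1)=1$. (Any knot differing from $K_0$ by a single nontrivial crossing change would serve equally well.)

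Next I would note that, by the definition of $\mathcal{G}$, the vertices $K_0$ and $K_1$ span a $1$-simplex $e$, and apply the preceding theorem to $e$ to obtain an infinitely high dimensional simplex $\sigma$ of $\mathcal{G}$ with $e\subseteq\sigma$. Then $K_0$ is one of the infinitely many, pairwise distinct vertices of $\sigma$; enumerating the others as $K_1,K_2,K_3,\dots$ and using the defining property that any two distinct vertices of a simplex in $\mathcal{G}$ lie at Gordian distance $1$, I would read off $d_G(K_i,K_j)=1$ for all $i\ne j$, which is exactly the claimed infinite family.

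Since the preceding theorem is taken as given, this deduction is short; the only part that genuinely needs an argument is that $K_0$ lies on some $1$-simplex, and within that the sole (entirely routine) obstacle is checking $K_1\ne K_0$, which the Alexander polynomial identity above settles.
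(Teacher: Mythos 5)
Your proposal is correct and follows exactly the intended route: the paper states this corollary (quoted from Hirasawa--Uchida) without proof, and the standard derivation is precisely what you do --- exhibit one $1$-simplex at $K_0$, e.g.\ $\{K_0, K_0\,\#\,3_1\}$, where the single crossing change in the trefoil summand gives $d_G\le 1$ and the Alexander polynomial (or additivity of genus) gives $K_0\,\#\,3_1\neq K_0$, and then apply the preceding theorem to that edge. No gaps; your parenthetical remark that any single crossing change would do should be read with the proviso that the resulting knot must actually differ from $K_0$, which your explicit construction guarantees.
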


J.~Hoste, Y.~Nakanishi and K.~Taniyama~\cite{l} defined $H(n)$-\emph{move} presented in Fig.~\ref{fig1} and proved that for any integer $n \geq 2$  every knot diagram can be converted into trivial knot diagram using finite number of $H(n)$-moves. It was also proved in \cite{l} that each $H(n)$-move can be realized using one $H(n+1)$-move and few Reidemeister moves.
\begin{figure}[h]
\centerline{\includegraphics[width=0.8\linewidth]{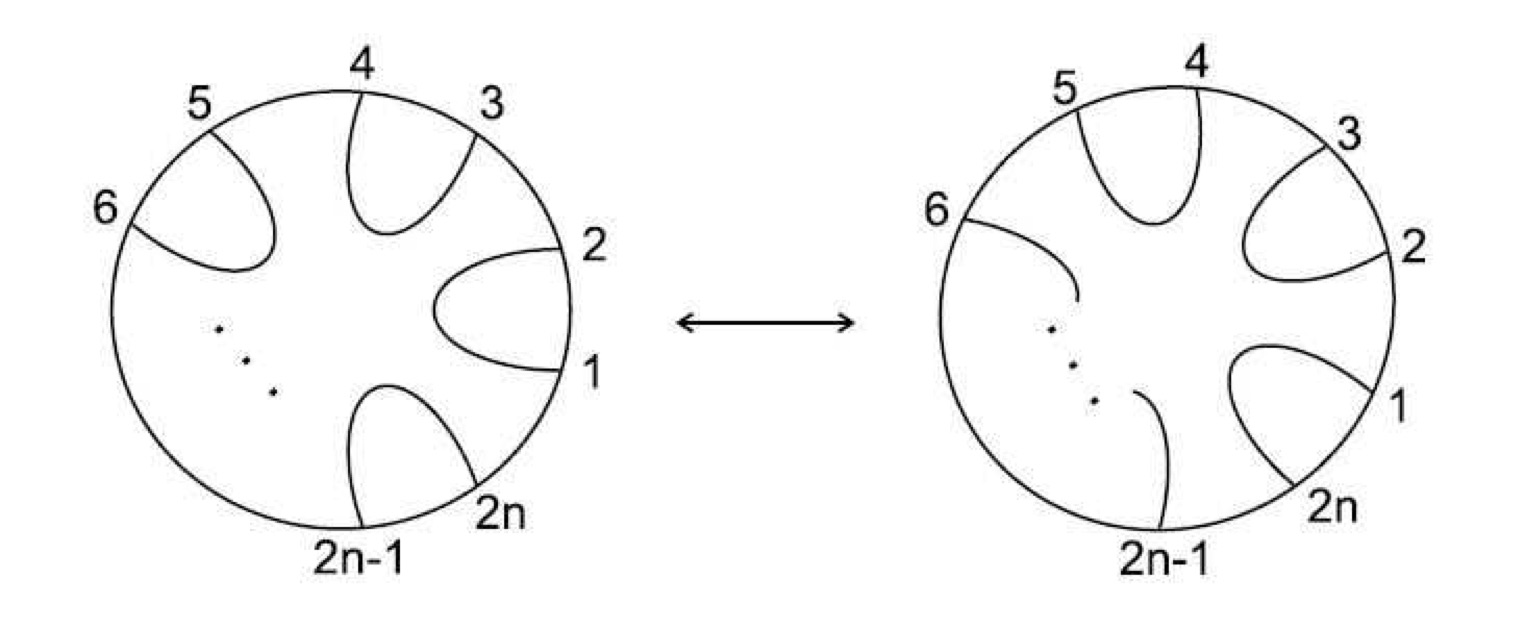}} 
\caption{$H(n)$-move.} \label{fig1}
\end{figure}

K.~Zhang, Z.~Yang and F.~Lei~\cite{f} defined Gordian complex $\mathcal{G}_{H(n)}$ of knots by $H(n)$-move in a similar way by taking all knot isotopy classes as vertex set and defining a set of $n+1$ knots $\{K_0, K_1, \ldots, K_n\}$ as an $n$-simplex if $d_{H(n)}(K_i,K_j)=1$ for distinct $i$ and $j$. It follows that any simplex in $\mathcal{G}_{H(n)}$ also forms a simplex in $\mathcal{G}_{H(n+1)}$ as a $H(n)$-move can be realized using one $H(n+1)$-move. Further, following results were proved for $\mathcal{G}_{H(n)}$.

\begin{theorem} \cite{f}
For any $0$-simplex p of the $H(n)$-Gordian complex, there exists an arbitrarily high dimensional simplex $\sigma$ of $\mathcal{G}_{H(n)}$ such that p is a subcomplex of $\sigma$ for all $n \geq 2$.
\end{theorem}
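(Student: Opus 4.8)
\medskip
\noindent\emph{Sketch of a proof.}
The plan is to reproduce, in the $H(n)$-setting, the scheme by which Hirasawa and Uchida (the Theorem and Corollary quoted above) built an infinite simplex around an arbitrary vertex. First I would reduce to the case $n=2$. Indeed, suppose $\{K_m\}_{m\ge 0}$ is a family of pairwise non-isotopic knots with $d_{H(2)}(K_i,K_j)=1$ for all $i\ne j$. Since one $H(k)$-move can be realized by a single $H(k+1)$-move together with Reidemeister moves, induction on $k$ shows $d_{H(n)}(K_i,K_j)\le 1$ for every $n\ge 2$; as distinct knots cannot have $H(n)$-distance $0$, in fact $d_{H(n)}(K_i,K_j)=1$. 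Hence the same family spans an infinite-dimensional (in particular, arbitrarily high dimensional) simplex in every $\mathcal{G}_{H(n)}$, $n\ge 2$, and taking $K_0=p$ puts the $0$-simplex $p$ in it. So it suffices to produce such a family for $n=2$.

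Next I would set up the family by local modification of a fixed diagram. Fix a diagram $D_0$ of $p=K_0$ and, after a planar isotopy, a disk $B_0$ meeting $D_0$ in a trivial two-string tangle; such a $B_0$ exists alongside any edge of $D_0$. For $m\ge 0$ let $K_m$ be the knot obtained from $D_0$ by substituting a tangle $\tau_m$ into $B_0$, with $\tau_0$ the trivial tangle so that $K_0$ is recovered. Two things must then be arranged: (I) the knots $K_m$ are pairwise non-isotopic; and (II) for every pair $i\ne j$ there is a diagram of $K_i$ in which a \emph{single} $H(2)$-move — performed on a trivial two-string tangle, replacing it by the clasped two-string tangle of Figure~\ref{fig1} with $n=2$ — yields a diagram of $K_j$.

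The heart of the argument, and the step I expect to be the main obstacle, is (II): one must choose the tangles $\tau_m$ so that \emph{any} two of them, not merely consecutive ones, are interchanged by one $H(2)$-move — a naive ``twist-region'' family relates only consecutive members and so produces mere $1$-simplices. The Hirasawa--Uchida device for the crossing-change complex is, in effect, to build each $\tau_m$ around one ``active'' site together with a bookkeeping strand carrying the index $m$, arranged so that a single move at the active site can overwrite the recorded index from $i$ to any $j$ while the remaining discrepancy is absorbed by Reidemeister moves; here the $H(2)$-move (trivial two-tangle $\leftrightarrow$ clasped two-tangle) plays the role formerly played by the crossing change, and transplanting the device across should give (II). For (I) I would engineer the bookkeeping so that $K_m = K_0 \mathbin{\#} L_m$ for an explicit family $L_m$ and separate the $L_m$ — hence the $K_m$ — by an invariant well behaved under connected sum, for instance the signature (additive) or the Alexander polynomial (multiplicative), or, when the relevant link complement is hyperbolic, by hyperbolic volume via Thurston's Dehn surgery theorem. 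With (I) and (II) in hand, $\{K_m\}$ is the desired arbitrarily high dimensional simplex containing $p$, and by the first paragraph the statement follows for all $n\ge 2$.
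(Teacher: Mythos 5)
There is a genuine gap. The statement you are proving is quoted in this paper from Zhang--Yang--Lei and is not proved here, but its entire mathematical content is the explicit construction you label (II) and then defer: a family $\{K_m\}_{m\ge 0}$, containing the prescribed knot $K_0=p$, in which \emph{any} two members are related by a single $H(2)$-move. Your reduction to $n=2$ via ``one $H(k)$-move is realized by one $H(k+1)$-move plus Reidemeister moves'' is correct and is exactly the remark made after Fig.~\ref{fig1}, and your observation that pairwise non-isotopy upgrades $d_{H(n)}\le 1$ to $=1$ is fine. But after that you only describe a plan: you acknowledge that a naive twist-region family gives only $1$-simplices, and you propose to ``transplant the Hirasawa--Uchida device,'' without exhibiting the tangles $\tau_m$, without verifying that the local modification used really is a single $H(2)$-move (trivial $2$-string tangle replaced by the tangle of Fig.~\ref{fig1}), and without verifying that the leftover discrepancy between the diagrams of $K_i$ and $K_j$ is absorbed by Reidemeister moves alone. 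That transplantation is not automatic: an $H(2)$-move is not a crossing change, and clasp-type moves can behave very differently from crossing changes (compare the $\sharp$-move, where the Arf-invariant parity obstruction kills all simplices of dimension $\ge 2$, and the pass-move, which is not even an unknotting operation). So the heart of the theorem is precisely the step you flag as ``the main obstacle'' and leave unproved.

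For comparison, the successful constructions in this circle of results -- including this paper's own Theorem~\ref{thm1} for region crossing change and Theorem~\ref{1} for the arc shift move, following Hirasawa--Uchida -- do not try to ``overwrite an index $i$ by $j$'' inside one fixed disk. Instead one builds $K_m$ as a serial chain of $m$ identical blocks arranged so that performing the move once in block $i$ of $K_m$ makes everything to one side of that block unravel by Reidemeister (or generalized Reidemeister) moves, producing $K_{i-1}$; this is what yields distance one between \emph{arbitrary} pairs, not just consecutive ones. Distinctness is then proved by an explicit invariant computation on the whole family (Kawauchi's coefficient polynomial in Theorem~\ref{thm1}, the maximal degree of the Kauffman $f$-polynomial in Theorem~\ref{1}; in the $H(n)$ setting one similarly computes a classical invariant), rather than by the unverified assertion that $K_m=K_0\,\sharp\,L_m$ with separable $L_m$ -- a decomposition your sketch also does not establish. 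Until you write down the blocks (or tangles), check that the required transition really is one $H(2)$-move followed by Reidemeister moves, and carry out an invariant computation distinguishing the resulting knots, the proposal is an outline of the known strategy rather than a proof.
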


\begin{corollary} \cite{f} 
For any knot $K_0$, there exists an infinite family of knots $K_0, K_1, K_2, \ldots$ such that the $H(n)$-Gordian distance $d_{H(n)}(Ki,Kj) = 1$ for any $i \neq j $ and all $n \geq 2$.
\end{corollary}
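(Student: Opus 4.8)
The plan is to deduce the corollary from the preceding theorem on arbitrarily high dimensional simplices of $\mathcal{G}_{H(n)}$, after two simplifications. First, since an $H(n)$-move can be realized by a single $H(n+1)$-move together with Reidemeister moves, the equality $d_{H(n)}(K_i,K_j)=1$ for all $n\ge 2$ follows once it is established for $n=2$ (the bound $d_{H(n)}\ge 1$ being automatic from $K_i\ne K_j$), so I may work with $H(2)$-moves throughout. Second, the theorem as stated provides, for each $N$, an $N$-simplex $\sigma_N$ of $\mathcal{G}_{H(2)}$ containing the given vertex $p=K_0$, whereas the corollary asks for one infinite family $K_0,K_1,K_2,\dots$ with all pairwise distances equal to $1$. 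To bridge this I would not use the theorem as a black box but inspect its proof: the construction is local (a single ball in a fixed diagram of $K_0$ is modified), hence one may arrange $\sigma_N$ to be a face of $\sigma_{N+1}$, and then the vertex set of $\bigcup_N\sigma_N$ is the desired infinite family.

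Concretely, starting from a diagram $D_0$ of $K_0$, I would use Reidemeister moves to produce a small ball $B$ meeting $D_0$ in two coherently oriented parallel arcs, with $D_0\setminus B$ fixed once and for all. For $i\ge 1$ define $K_i$ by replacing the trivial $2$-tangle in $B$ by a tangle $R_i$ --- built from clasps or bands on the two strands and encoding the integer $i$ --- chosen so that $R_0$ is the trivial tangle (so $K_0$ is recovered) and, crucially, so that for every pair $i\ne j$ a single $H(2)$-move supported in $B$ transforms the picture of $K_i$ into that of $K_j$. Nestedness $\{K_0,\dots,K_N\}\subset\{K_0,\dots,K_{N+1}\}$ is then immediate, giving $d_{H(2)}(K_i,K_j)\le 1$ for all $i\ne j$. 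For the reverse inequality I would exhibit a classical invariant (Alexander polynomial, signature, or a Fox/quandle coloring count) whose value on $K_i$ is computed from the local replacement $R_i$ through a connected-sum-type formula, so that it genuinely varies with $i$ no matter which knot $K_0$ was fixed; this forces $K_i\ne K_j$ and hence $d_{H(2)}(K_i,K_j)=1$.

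The main obstacle is ensuring that \emph{every} pair $R_i,R_j$ --- not merely consecutive ones --- is related by a single $H(2)$-move: this is precisely the Hirasawa--Uchida-type phenomenon, and it forces the local picture to be arranged so that one $H(2)$-move can simultaneously erase the $i$-encoding and install the $j$-encoding, which pins down the tangles $R_i$ rather rigidly. A secondary difficulty is keeping the members pairwise distinct \emph{uniformly} in the arbitrary base knot $K_0$: the separating invariant must be chosen so that the contribution of $R_i$ is not absorbed by that of $K_0$, which is why a multiplicative or additive invariant evaluated on the local tangle is the natural tool. Once both points are handled, escalating from $n=2$ to all $n\ge 2$ and passing to the union of the nested simplices completes the argument.
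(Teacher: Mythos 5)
Your two preliminary reductions are correct: since an $H(2)$-move can be realized by a single $H(n)$-move (for each $n\geq 2$) up to Reidemeister moves, $d_{H(n)}(K_i,K_j)\leq d_{H(2)}(K_i,K_j)$, and distinctness of the knots supplies the lower bound; and you are right that the corollary is not a formal consequence of the theorem's statement but must come from the explicit family in its proof. Bear in mind, though, that this statement is quoted in the paper from \cite{f} without proof, so the relevant comparison is with the construction used there and imitated in this paper's own results (Theorem~\ref{thm1}, Corollary~\ref{corollary2-4}, Theorem~\ref{1}).

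The genuine gap is that everything of substance is deferred: you never exhibit the tangles $R_i$, never verify that \emph{every} pair $K_i,K_j$ (not merely consecutive ones) is related by a single $H(2)$-move supported in the ball $B$, and never carry out the invariant computation --- indeed you label exactly this ``the main obstacle'' and leave it unresolved, so what you have is a programme rather than a proof. In the established arguments this step is done concretely: one builds a block (staircase) diagram $K_m$ with $m$ identical blocks arranged so that performing the move inside the $i$-th block of $K_m$ collapses the diagram to $K_{i-1}$; this is precisely the mechanism by which one move ``erases the encoding,'' and it gives $d(K_i,K_j)\leq 1$ for all $i\neq j$ at once. Pairwise distinctness is then obtained by computing a specific invariant whose degree grows strictly with $m$ (Kawauchi's coefficient polynomial $c_0$ in the region-crossing-change case of Theorem~\ref{thm1}, Kauffman's $f$-polynomial in the virtual case of Theorem~\ref{1}); ``Alexander polynomial, signature, or a coloring count'' chosen after the fact is not an argument. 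Finally, your insistence that the invariant's local contribution ``not be absorbed'' by an arbitrary $K_0$ is an unjustified complication: the standard route, used in this paper, is to prove the unknot case first and pass to general $K_0$ by connected sum, where the same single move converts $K_0\sharp K_i$ into $K_0\sharp K_j$ and distinctness of the summands plus unique prime decomposition gives distinctness of the family, with no uniformity issue to manage; note also that for a genuine tangle replacement inside a diagram of $K_0$ (rather than a connected sum) the ``connected-sum-type formula'' you invoke for the invariant is not automatic.
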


Pass-moves and $\overline{\sharp}$-move are another set of local moves are defined in an oriented  knot diagram. L.~Kauffman \cite{n} studied pass-moves presented in Fig.~\ref{fig2} and proved that every knot can be transformed into either trefoil knot or trivial knot diagram using pass-moves. As a result pass-move fails to become an unknotting operation for knots.
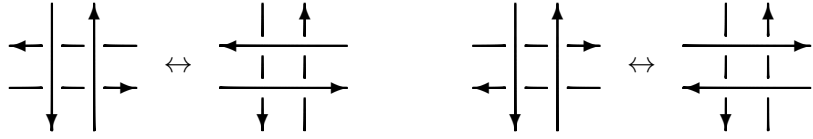
\begin{figure}
\unitlength=.28mm
\begin{center} 
\begin{picture}(0,70)(0,0)
\put(-110,0){\begin{picture}(0,60)
\thicklines
\qbezier(-60,0)(-60,0)(-60,60)
\qbezier(-40,0)(-40,0)(-40,60)
\qbezier(-80,40)(-65,40)(-65,40)
\qbezier(-55,40)(-55,40)(-45,40)
\qbezier(-35,40)(-35,40)(-20,40)
\qbezier(-80,20)(-65,20)(-65,20)
\qbezier(-55,20)(-55,20)(-45,20)
\qbezier(-35,20)(-35,20)(-20,20)
\put(-60,5){\vector(0,-1){5}}
\put(-40,55){\vector(0,1){5}}
\put(-75,40){\vector(-1,0){5}}
\put(-25,20){\vector(1,0){5}}
\put(0,30){\makebox(0,0)[c,c]{$\leftrightarrow$}}
\qbezier(20,40)(20,40)(80,40)
\qbezier(20,20)(20,20)(80,20)
\qbezier(40,0)(40,0)(40,15)
\qbezier(40,25)(40,25)(40,35)
\qbezier(40,45)(40,45)(40,60)
\qbezier(60,0)(60,0)(60,15)
\qbezier(60,25)(60,25)(60,35)
\qbezier(60,45)(60,45)(60,60)
\put(40,5){\vector(0,-1){5}}
\put(60,55){\vector(0,1){5}}
\put(25,40){\vector(-1,0){5}}
\put(75,20){\vector(1,0){5}}
\end{picture}}
\put(110,0){\begin{picture}(0,60)
\thicklines
\qbezier(-60,0)(-60,0)(-60,60)
\qbezier(-40,0)(-40,0)(-40,60)
\qbezier(-80,40)(-65,40)(-65,40)
\qbezier(-55,40)(-55,40)(-45,40)
\qbezier(-35,40)(-35,40)(-20,40)
\qbezier(-80,20)(-65,20)(-65,20)
\qbezier(-55,20)(-55,20)(-45,20)
\qbezier(-35,20)(-35,20)(-20,20)
\put(-60,5){\vector(0,-1){5}}
\put(-40,55){\vector(0,1){5}}
\put(-25,40){\vector(1,0){5}}
\put(-75,20){\vector(-1,0){5}}
\put(0,30){\makebox(0,0)[c,c]{$\leftrightarrow$}}
\qbezier(20,40)(20,40)(80,40)
\qbezier(20,20)(20,20)(80,20)
\qbezier(40,0)(40,0)(40,15)
\qbezier(40,25)(40,25)(40,35)
\qbezier(40,45)(40,45)(40,60)
\qbezier(60,0)(60,0)(60,15)
\qbezier(60,25)(60,25)(60,35)
\qbezier(60,45)(60,45)(60,60)
\put(40,5){\vector(0,-1){5}}
\put(60,55){\vector(0,1){5}}
\put(75,40){\vector(1,0){5}}
\put(25,20){\vector(-1,0){5}}
\end{picture}}
\end{picture}
\end{center}
\caption{Pass-moves} \label{fig2} 
\end{figure}

K.~Zhang and Z.~Yang \cite{m} defined the Gordian complexes $\mathcal{G}_{P}$ and $\mathcal{G}_{\overline{\sharp}}$ of knots by pass-moves and $\overline{\sharp}$-moves presented in Fig.~\ref{fig3}. Further, it was proved that one $\overline{\sharp}$-move can be realized using two of the pass-moves along with some Reidemeister moves and vice versa. Both the Gordian complexes $\mathcal{G}_{P}$ and $\mathcal{G}_{\overline{\sharp}}$ are not connected complexes and have exactly two components corresponding to trefoil knot and unknot. Following results were proved for $\mathcal{G}_{P}$ and $\mathcal{G}_{\overline{\sharp}}$.
\begin{figure}
\unitlength=.28mm
\begin{center} 
\begin{picture}(0,70)(0,0)
\put(-110,0){\begin{picture}(0,60)
\thicklines
\qbezier(-60,0)(-60,0)(-60,60)
\qbezier(-40,0)(-40,0)(-40,60)
\qbezier(-80,40)(-65,40)(-65,40)
\qbezier(-55,40)(-55,40)(-45,40)
\qbezier(-35,40)(-35,40)(-20,40)
\qbezier(-80,20)(-65,20)(-65,20)
\qbezier(-55,20)(-55,20)(-45,20)
\qbezier(-35,20)(-35,20)(-20,20)
\put(-60,5){\vector(0,-1){5}}
\put(-40,55){\vector(0,1){5}}
\put(-25,40){\vector(1,0){5}}
\put(-25,20){\vector(1,0){5}}
\put(0,30){\makebox(0,0)[c,c]{$\leftrightarrow$}}
\qbezier(20,40)(20,40)(80,40)
\qbezier(20,20)(20,20)(80,20)
\qbezier(40,0)(40,0)(40,15)
\qbezier(40,25)(40,25)(40,35)
\qbezier(40,45)(40,45)(40,60)
\qbezier(60,0)(60,0)(60,15)
\qbezier(60,25)(60,25)(60,35)
\qbezier(60,45)(60,45)(60,60)
\put(40,5){\vector(0,-1){5}}
\put(60,55){\vector(0,1){5}}
\put(75,40){\vector(1,0){5}}
\put(75,20){\vector(1,0){5}}
\end{picture}}
\put(110,0){\begin{picture}(0,60)
\thicklines
\qbezier(-60,0)(-60,0)(-60,60)
\qbezier(-40,0)(-40,0)(-40,60)
\qbezier(-80,40)(-65,40)(-65,40)
\qbezier(-55,40)(-55,40)(-45,40)
\qbezier(-35,40)(-35,40)(-20,40)
\qbezier(-80,20)(-65,20)(-65,20)
\qbezier(-55,20)(-55,20)(-45,20)
\qbezier(-35,20)(-35,20)(-20,20)
\put(-60,55){\vector(0,1){5}}
\put(-40,5){\vector(0,-1){5}}
\put(-25,40){\vector(1,0){5}}
\put(-25,20){\vector(1,0){5}}
\put(0,30){\makebox(0,0)[c,c]{$\leftrightarrow$}}
\qbezier(20,40)(20,40)(80,40)
\qbezier(20,20)(20,20)(80,20)
\qbezier(40,0)(40,0)(40,15)
\qbezier(40,25)(40,25)(40,35)
\qbezier(40,45)(40,45)(40,60)
\qbezier(60,0)(60,0)(60,15)
\qbezier(60,25)(60,25)(60,35)
\qbezier(60,45)(60,45)(60,60)
\put(40,55){\vector(0,1){5}}
\put(60,5){\vector(0,-1){5}}
\put(75,40){\vector(1,0){5}}
\put(75,20){\vector(1,0){5}}
\end{picture}}
\end{picture}
\end{center}
\caption{$\overline{\#}$-Moves} \label{fig3} 
\end{figure}
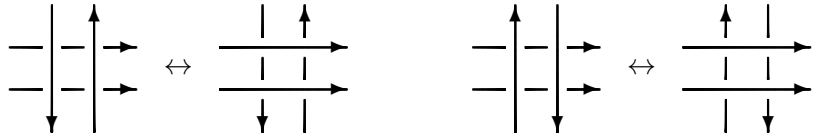

\begin{theorem} \cite{m} 
For any knot $K_0$, there exists an infinite family of knots $\{K_0, K_1, \ldots \}$ such that the Gordian distance $d_{G}(K_i,K_j) = 1$ and pass-move Gordian distance $d_P(K_i,K_j) = 1$ for any $i \neq j$.
\end{theorem}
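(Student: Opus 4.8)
\emph{Idea of proof.} The plan is to build a single infinite family $\{K_n\}_{n\ge 0}$, with $K_0$ the given knot, realizing both distances simultaneously, by adapting the twisting construction behind the Hirasawa--Uchida corollary quoted above. Fix a diagram $D_0$ of $K_0$. Using a Reidemeister~II move, finger an arc of $D_0$ so that it runs parallel to a nearby arc, thereby creating a small tangle ball $B$ in which $D_0$ appears as a trivial $2$-string tangle $R_0$. Inside $B$ one inserts a family of ``clasp-with-$n$-twists'' tangles $R_n$ and sets $K_n$ to be the knot of the resulting diagram $D_n$, so that $n=0$ returns $K_0$. The tangles are to be engineered, exactly as in the Hirasawa--Uchida construction, so that: (i) for every pair $n\ne m$ the knot $K_n$ admits a diagram containing a distinguished clasp whose single-crossing switch, after Reidemeister moves, produces $K_m$ --- the point being that $K_n$ has a second description in which the accumulated twisting is ``released'' and can then be re-absorbed to the value $m$, and this holds for all targets $m$ at once --- and (ii) this distinguished clasp lies between two antiparallel bands, so that switching its crossing is equivalently the pass-move of Figure~\ref{fig2}. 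Granting (i) and (ii), $d_G(K_n,K_m)\le 1$ and $d_P(K_n,K_m)\le 1$ for all $n\ne m$.

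There remains the distinctness of the $K_n$, which upgrades both inequalities to equalities (every such distance between non-isotopic knots is at least $1$, and $0$ otherwise). The Arf invariant is of no help here: a pass-move preserves it, so $d_P(K_n,K_m)=1$ already forces $\mathrm{Arf}(K_n)=\mathrm{Arf}(K_m)$, and all $K_n$ lie in the same component of $\mathcal{G}_P$. One must therefore separate them with a finer invariant. For a twisting gadget a standard one will do --- the determinant or a Tristram--Levine signature of $K_n$ grows without bound in $n$ --- or, choosing the unknotted surgery curve $C$ of the gadget so that $K_0\cup C$ is hyperbolic, Thurston's hyperbolic Dehn surgery theorem shows that $\mathrm{vol}(S^3\setminus K_n)$ is strictly monotone for $|n|$ large and so takes infinitely many values. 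In any case, after passing to a subsequence one obtains an infinite family $K_0,K_1,K_2,\dots$ with $d_G(K_i,K_j)=d_P(K_i,K_j)=1$ for all $i\ne j$, i.e.\ an infinite-dimensional simplex lying in both $\mathcal{G}$ and $\mathcal{G}_P$.

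The main obstacle is item (i): a bare twist region relates $K_n$ only to $K_{n\pm 1}$ (or $K_{n\pm 2}$), and crossing switches of any one fixed diagram relate it to at most one other knot, so the construction must supply a genuinely different diagram of $K_i$ for each target $K_j$. This is precisely the crux of the Hirasawa--Uchida method: designing the clasp gadget so that a single clasp switch completely releases the twisting, which may then be re-inserted with any prescribed number of twists. The second, smaller, difficulty is matching orientations in (ii) --- the pass-move of Figure~\ref{fig2} requires a specific antiparallel pattern on the two bands at the move site, so the distinguished clasp must be arranged (replacing it by an orientation-compatible but otherwise equivalent gadget if necessary, while preserving (i) and the distinctness of the $K_n$) with its strands oriented accordingly. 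Checking that such a variant exists, together with the hyperbolicity input used for distinctness of $K_n$ when $K_0$ is arbitrary, are the remaining technical points.
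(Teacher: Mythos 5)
This statement is quoted in the paper from Zhang--Yang~\cite{m} and is not proved there, so there is no internal proof to compare against; judged on its own terms, your text is a strategy outline rather than a proof, and the missing pieces are exactly the substance of the theorem. Your item (i) --- producing, for \emph{every} pair $n\neq m$, a diagram of one of the two knots in which a \emph{single} crossing change yields the other --- is the whole content of the Hirasawa--Uchida-type construction, and you explicitly defer it (``the main obstacle,'' ``remaining technical points''). Nothing in the proposal specifies the clasp/twist gadget, so none of the subsequent claims can be checked: in particular the assertion that a bare twist region can be ``released'' by one clasp switch and ``re-absorbed to the value $m$'' for all $m$ simultaneously is precisely what must be exhibited, not assumed. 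Item (ii) is also not a routine orientation check: a single crossing change is not in general realizable as a pass-move (pass-moves preserve the Arf invariant, crossing changes need not), so the statement that the distinguished clasp switch ``is equivalently the pass-move'' needs a separate, explicit diagrammatic argument --- for each pair one must produce a (possibly different) diagram in which one pass-move alone transforms $K_n$ into $K_m$. Without the concrete family, the distinctness step is likewise unsupported: you cannot verify that the determinant or a Tristram--Levine signature is unbounded, nor that the surgery curve of an unspecified gadget can be chosen so that $K_0\cup C$ is hyperbolic for an arbitrary $K_0$, and passing to a subsequence is only legitimate once some invariant has been shown to take infinitely many values on the family.

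For comparison, the known proofs (Hirasawa--Uchida~\cite{a} for the crossing-change statement, adapted by Zhang--Yang~\cite{m} to pass-moves, and closely analogous to this paper's own Theorem~\ref{thm1}) proceed by writing down an explicit family in which the diagram of $K_m$ contains, for each $i<m$, a designated local site where one crossing change --- arranged on antiparallel bands so that one pass-move has the same effect up to Reidemeister moves --- produces $K_i$; for the pair $(K_i,K_j)$ with $i<j$ one then works in the diagram of $K_j$, so no ``re-absorption'' trick is needed. Distinctness is established by a computable polynomial invariant (coefficients of the Conway/HOMFLY polynomial, as with Kawauchi's $c_0$ in Theorem~\ref{thm1}), which also respects the Arf constraint you correctly identify. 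To repair your argument you would need to exhibit such a family explicitly, verify the pass-move realization at each designated site, and compute an invariant on it; as written, the proposal records the right plan but leaves each of these steps open.
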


\begin{theorem} \cite{m} 
For any knot $K_0$, there exists an infinite family of knots $\{K_0, K_1, \ldots \}$ such that $d_{\overline{\sharp}}(K_i,K_j) = 1$ and $d_{H(n)}(K_i,K_j) = 1$ for any $i \neq j$ and $n \geq 2$.
\end{theorem}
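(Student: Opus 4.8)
The plan is to produce the family as connected sums $K_m=K_0\# J_m$, $m\ge 0$, realised diagrammatically by fixing a diagram $D_0$ of $K_0$, selecting an edge of it, and tying into that edge a long knot $J_m$ drawn from a one-parameter family (with $J_0$ the unknot, so that $K_0$ is the member of index $0$) that lives inside a single small ``gadget'' tangle. The properties I would arrange are: (a) for every $i\neq j$ the long knot $J_i$ is carried to a diagram of $J_j$ by one $\overline{\sharp}$-move performed inside the gadget ball, and likewise by one $H(2)$-move performed there; (b) the $J_m$, hence the $K_m$, are pairwise non-isotopic; (c) every $J_m$ has trivial Arf invariant, so that each $K_m$ lies in the same component of $\mathcal G_{\overline{\sharp}}$ (and of $\mathcal G_P$) as $K_0$ --- this is automatic once (a) holds, since a $\overline{\sharp}$-move is two pass-moves and hence preserves the Arf invariant, but it constrains which families $\{J_m\}$ are admissible.

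Granting (a)--(c), the theorem follows by bookkeeping. A move supported inside the gadget ball is equally a local move on the diagram $D_0\#J_i$ of $K_i$, and by (a) it turns that diagram into a diagram of $K_j$ after some Reidemeister moves; together with (b) this gives $d_{\overline{\sharp}}(K_i,K_j)=1$ and $d_{H(2)}(K_i,K_j)=1$ for all $i\neq j$. To reach arbitrary $n\ge 2$ I would invoke the result of \cite{l} that one $H(n)$-move can be reproduced by one $H(n+1)$-move together with finitely many Reidemeister moves; iterating, one $H(2)$-move can be reproduced by a single $H(n)$-move and Reidemeister moves, so $d_{H(2)}(K_i,K_j)=1$ forces $d_{H(n)}(K_i,K_j)\le 1$, while $d_{H(n)}(K_i,K_j)\ge 1$ because $K_i\neq K_j$. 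Hence $d_{H(n)}(K_i,K_j)=1$ for all $n\ge 2$ and all $i\neq j$. For (b), the cleanest route is a multiplicative classical invariant and its connected-sum formula,
\[
\Delta_{K_m}(t)=\Delta_{K_0}(t)\,\Delta_{J_m}(t),\qquad \det(K_m)=\det(K_0)\,\det(J_m),
\]
choosing the $J_m$ so that their Alexander polynomials (or their determinants) are pairwise distinct; then the $K_m$ are pairwise distinct, in particular each distinct from $K_0$, which is exactly what the lower bounds above require.

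The real content, and the step I expect to be the main obstacle, is (a): building a gadget in which a single move of each prescribed type moves the parameter from $i$ to an \emph{arbitrary} $j$, not merely to a neighbouring value. The idea is to organise the gadget around one ``clasp/band'' site with the property that a single $\overline{\sharp}$-move there (respectively a single $H(2)$-move there) simultaneously destroys the whole pattern the gadget currently carries and installs a new one, using that both $\overline{\sharp}$- and $H(2)$-moves are four-valent, band-type modifications and so are flexible enough to effect such an exchange; the part of the gadget that records the index $m$ is left untouched by the move but is laid out so that, after the move and a standard sequence of Reidemeister moves, the diagram is precisely the chosen diagram of $J_j$. Verifying this uniformly over all ordered pairs $(i,j)$ while keeping the $J_m$ mutually distinct and Arf-trivial is the technical core; everything else is assembly.
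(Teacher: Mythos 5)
Your overall shell is sound: reducing $d_{H(n)}=1$ to $d_{H(2)}=1$ via the result of \cite{l} that one $H(n)$-move is realized by one $H(n+1)$-move, noting that $K_i\neq K_j$ gives the lower bound $1$, observing that a $\overline{\sharp}$-move (being two pass-moves) preserves the Arf invariant so the family must stay in one pass-equivalence class, and using multiplicativity of $\Delta$ or $\det$ under connected sum to distinguish $K_m=K_0\sharp J_m$ once the $J_m$ are distinguished. But the proof has a genuine gap exactly where you say you expect one: item (a), the existence of a family $\{J_m\}$ in which a single $\overline{\sharp}$-move and also a single $H(2)$-move carries $J_i$ to $J_j$ for \emph{every} pair $i\neq j$, is never constructed. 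The appeal to both moves being ``four-valent, band-type modifications and so flexible enough'' is not an argument; since this is the entire content of the theorem, what remains is a plan rather than a proof. Moreover, items (b) and (c) are not free once (a) is imposed: the $J_m$ are whatever the gadget produces, so you cannot simply ``choose'' them to have pairwise distinct Alexander polynomials or determinants; distinctness has to be verified for the concrete family, typically by an explicit invariant computation (compare how the present paper verifies distinctness for its own families via Kawauchi's coefficient polynomial in Theorem~\ref{thm1} and via the $f$-polynomial in Theorem~\ref{1}).

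A second, architectural point: your gadget requirement is stronger than what the statement needs, which is part of why it looks so hard. The definition of $\gamma$-distance allows you to choose, for each ordered pair $(i,j)$, a convenient diagram of one of the two knots; the standard constructions (Hirasawa--Uchida \cite{a}, and the analogues proved in this paper for region crossing change and for arc shift) build a diagram of $K_m$ out of $m$ repeated blocks so that performing the move at the $i$-th block of that diagram collapses the tail and yields $K_{i-1}$ after Reidemeister moves. Thus one never needs a single fixed small tangle that can be ``re-programmed'' from $i$ to an arbitrary $j$ by one move; one only needs, for each pair, some diagram of the larger-index knot in which one move at a suitable site produces the smaller-index knot, with the same sites simultaneously admitting an interpretation as a $\overline{\sharp}$-move and as an $H(2)$-move. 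Recasting your construction in that block form, and then actually exhibiting the blocks and computing an invariant (e.g.\ Alexander polynomials of the resulting $J_m$, all chosen Arf-trivial) to certify pairwise distinctness, is what would turn the proposal into a proof.
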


It can be observed from the case of Gordian complexes $\mathcal{G}_{P}$ and $\mathcal{G}_{\overline{\sharp}}$, it is the local move which decides the properties like connectedness of a Gordian complex. Also, whether given $n+1$ knots $K_0, K_1,...,K_n$ will form an $n$-simplex or not entirely depends on the local move used to define the Gordian complex. For example, Y. Ohyama \cite{e} defined the Gordian complex of knots by $C_k$-move (see Fig.~\ref{fig4}) for $k\in \mathbb{N}$ and proved that there exists no $n$-simplex for $n\geq2$ in the $C_2$-Gordian complex (follows from~\cite[Proposition~2.3]{e}).
\begin{figure}[th] 
\centerline{\includegraphics[width=0.9\linewidth]{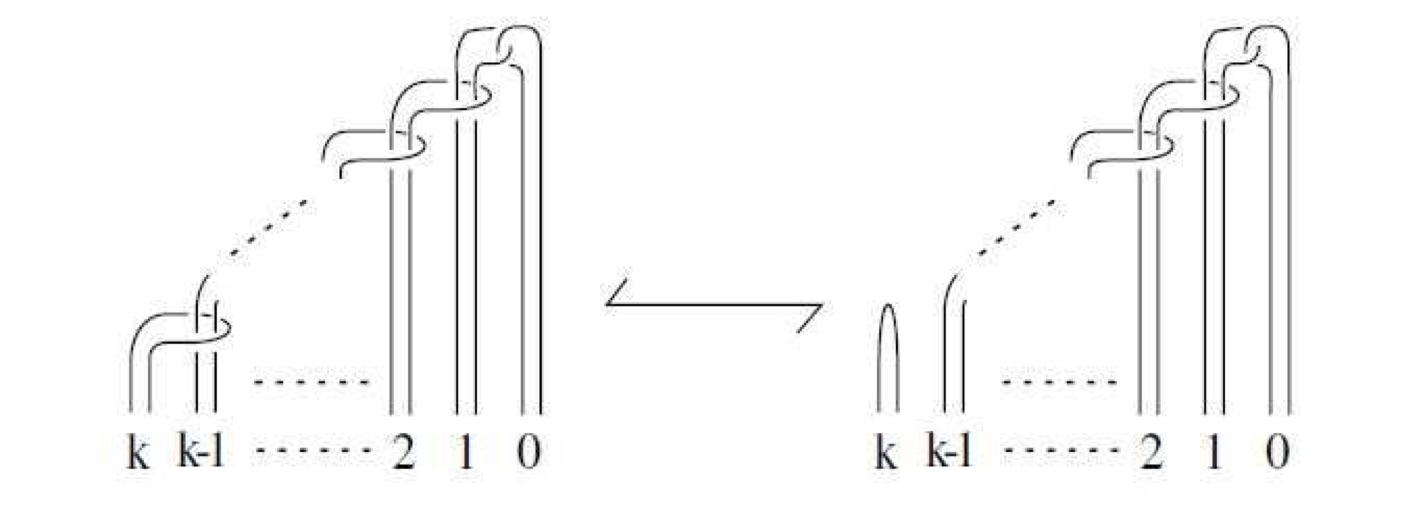}} 
\caption{$C_{k}$-Move.} \label{fig4}
\end{figure} 

The Fig.~\ref{fig5} presents $\sharp$-\emph{move}, an another local move defined in knot diagrams. H.~Murakami~\cite{k} proved that $\sharp$-move is an unknotting operation and defined distance $d_\sharp(K,K')$ between knots by $\sharp$-move. 
\begin{figure}
\unitlength=.28mm
\begin{center} 
\begin{picture}(0,70)(0,0)
\put(0,0){\begin{picture}(0,60)
\thicklines
\qbezier(-60,0)(-60,0)(-60,60)
\qbezier(-40,0)(-40,0)(-40,60)
\qbezier(-80,40)(-65,40)(-65,40)
\qbezier(-55,40)(-55,40)(-45,40)
\qbezier(-35,40)(-35,40)(-20,40)
\qbezier(-80,20)(-65,20)(-65,20)
\qbezier(-55,20)(-55,20)(-45,20)
\qbezier(-35,20)(-35,20)(-20,20)
\put(-60,55){\vector(0,1){5}}
\put(-40,55){\vector(0,1){5}}
\put(-25,40){\vector(1,0){5}}
\put(-25,20){\vector(1,0){5}}
\put(0,30){\makebox(0,0)[c,c]{$\leftrightarrow$}}
\qbezier(20,40)(20,40)(80,40)
\qbezier(20,20)(20,20)(80,20)
\qbezier(40,0)(40,0)(40,15)
\qbezier(40,25)(40,25)(40,35)
\qbezier(40,45)(40,45)(40,60)
\qbezier(60,0)(60,0)(60,15)
\qbezier(60,25)(60,25)(60,35)
\qbezier(60,45)(60,45)(60,60)
\put(40,55){\vector(0,1){5}}
\put(60,55){\vector(0,1){5}}
\put(75,40){\vector(1,0){5}}
\put(75,20){\vector(1,0){5}}
\end{picture}}
\end{picture}
\end{center}
\caption{$\#$-Move.} \label{fig5} 
\end{figure}
It was shown in \cite{k} that for two knots $K$ and $K'$ we have $d_\sharp(K,K') \equiv a_0(K)+a_0(K') (mod \,2)$, where $a_0(K)$ denotes Arf invariant of knot $K$. Therefore, for two knots $K$ and $K'$ having $d_\sharp(K,K')= 1$, Arf invariants $a_0(K)$ and $a_0(K')$ have different parity modulo 2. As a consequence there can exist no 2-simplex in Gordian complex by $\sharp$-move and in general it follows that there does not exist simplexes of dimension $n\geq2$ in the $\sharp$-move Gordian complex.

Virtual knot theory introduced by L.~Kauffman \cite{d} is an extension of classical knots to study knots embedded in thickened  surfaces. Forbidden moves and $v$-move are well known unknotting operations for virtual knots. S.~Horiuchi, K.~Komura, Y.~Ohyama and M.~Shimozawa \cite{b} extended the concept of Gordian complex to virtual knots using $v$-move which changes a classical crossing into virtual crossing. Later S.~Horiuchi and Y.~Ohyama \cite{c} defined Gordian complex of virtual knots using forbidden moves. In both Gordian complexes of virtual knots \cite{b,c}, it was proved that every $0$-simplex $\{K_0\}$ is always contained in a $n$-simplex for each positive integer $n$.

In the present paper, we firstly study Gordian complex of knots by the local move called \emph{region crossing change} often abbreviated as r.c.c.. A. Shimizu \cite{h} proved that every knot diagram can be converted into trivial knot diagram using finite number of r.c.c. operations. Local moves like $\sharp$-move, pass-move, $\overline{\sharp} $-move and $n$-gon move are in fact examples of r.c.c. on specific regions, so r.c.c. is a more general local move.  In Section~2 we survey known results and proof properties of the Gordian complex of a region crossing change in Theorem~\ref{thm1} and Corollary~\ref{corollary2-4}. The proof of the theorem is bases on calculations of Kawauchi's invariant of knots. 
In Section~3 we define the Gordian complex of virtual knots by \emph{arc shift move} and describe its properties in Theorem~\ref{1} and Corollary~\ref{2}. Proof of these statements are presented in Section~4. In Section~5 we calculate Kauffman's affine index polynomials for the  family of virtual knots used in the proof of Theorem~\ref{1} and show that our virtual knots have the same polynomial, see Proposition~\ref{prop51}. 

\section{Gordian complex by region crossing change}

A region in the knot diagram is a connected component of the complement of the projection of knot diagram in $\mathbb{R}^2$ obtained by replacing each crossing with a vertex. \emph{Region crossing change} (r.c.c.) at a region $R$ is defined as changing all the crossings lying at the boundary $\partial R$ of region $R$ (see Fig.~\ref{fig6}). It is a simple observation that r.c.c. applied twice at the same region $R$ in a knot diagram $D$ results back the diagram $D$ itself. We remark here that local moves like $\sharp$-move, pass-move, $\overline{\sharp}$-move are examples of region crossing change at specific regions.
\begin{figure}[h] 
\centerline{\includegraphics[width=0.6\linewidth]{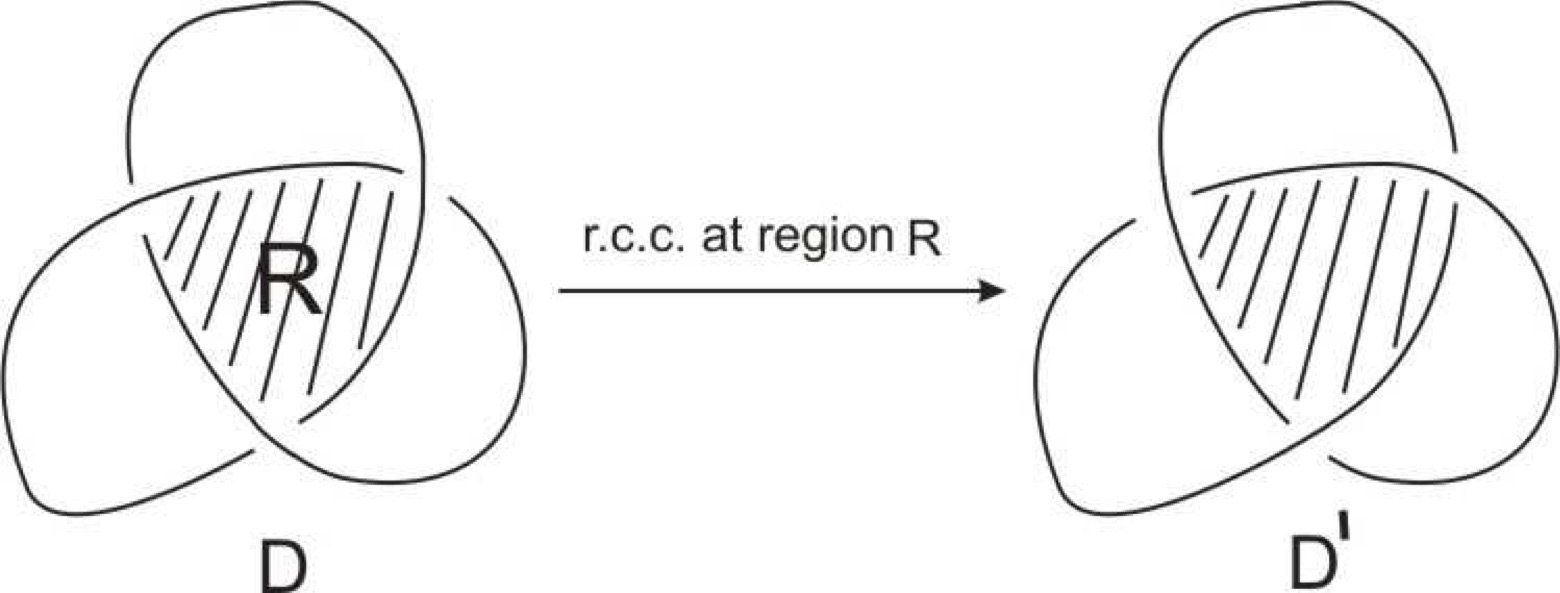}} 
\caption{$D'$ obtained from $D$ by r.c.c. at region $R$.} \label{fig6}
\end{figure} 

A.~Shimizu \cite[Theorem~1.1]{h} proved that a crossing change at any crossing $c$ in a knot diagram $D$ can be realized by applying r.c.c. at finite number of regions in $D$. Therefore, r.c.c. is an unknotting operation for knots. From a result of H.~Aida \cite[Theorem~2.5]{i} it follows that for every knot $K$, there exist a diagram $D$ such that one r.c.c. in $D$ results in the trivial knot. However, such a diagram $D$  may not always be a minimal diagram of $K$. Therefore, it may not be always possible to find two minimal diagrams $D$, $D'$ of the knots $K$, $K'$ such that $D$ can be converted into $D'$ using finite number of r.c.c.. Since r.c.c. is an unknotting operation, so for any two knots say $K$, $K'$ we have regions $\{R_i\}_{i=1}^{n}$ in $K$ and regions $\{R'_i\}_{i=1}^{m}$ in $K'$ such that $K(R_1...(R_n))$ and $K'(R'_1...(R'_m))$ are trivial knot diagrams. Here $K(R_1 \ldots (R_n))$ and $K'(R'_1 \ldots (R'_m))$ denotes respectively the knot diagrams obtained by applying r.c.c. at regions $\{R_i\}_{i=1}^{n}$ in $K$ and $\{R'_i\}_{i=1}^{m}$ in $K'$. Now consider the equivalent knot diagrams $D$ of $K$ and $D'$ of $K'$ as shown in the Fig.~\ref{fig7}.
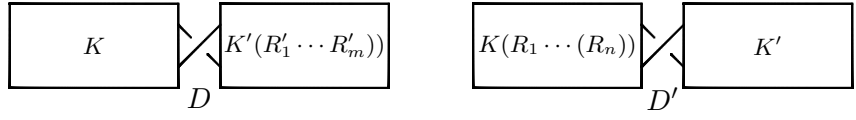
\begin{figure}
\unitlength=.28mm
\begin{center} 
\begin{picture}(0,50)(0,0)
\put(-110,0){\begin{picture}(0,40)
\thicklines
\qbezier(-90,0)(-90,0)(-90,40)
\qbezier(-90,0)(-90,0)(-10,0)
\qbezier(-90,40)(-90,40)(-10,40)
\qbezier(-10,0)(-10,0)(-10,40)
\put(-50,20){\makebox(0,0)[cc]{\footnotesize $K$}}
\qbezier(-10,10)(-10,10)(10,30)
\qbezier(-10,30)(-10,30)(-4,24)
\qbezier(4,16)(4,16)(10,10)
\qbezier(10,0)(10,0)(90,0)
\qbezier(10,0)(10,0)(10,40)
\qbezier(10,40)(10,40)(90,40)
\qbezier(90,0)(90,0)(90,40)
\put(50,20){\makebox(0,0)[cc]{\footnotesize $K' (R'_{1} \cdots R'_{m}))$}}
\put(0,-5){\makebox(0,0)[cc]{$D$}}
\end{picture}}
\put(110,0){\begin{picture}(0,60)
\thicklines
\qbezier(-90,0)(-90,0)(-90,40)
\qbezier(-90,0)(-90,0)(-10,0)
\qbezier(-90,40)(-90,40)(-10,40)
\qbezier(-10,0)(-10,0)(-10,40)
\put(-50,20){\makebox(0,0)[cc]{\footnotesize $K(R_{1} \cdots (R_{n}))$}}
\qbezier(-10,10)(-10,10)(10,30)
\qbezier(-10,30)(-10,30)(-4,24)
\qbezier(4,16)(4,16)(10,10)
\qbezier(10,0)(10,0)(90,0)
\qbezier(10,0)(10,0)(10,40)
\qbezier(10,40)(10,40)(90,40)
\qbezier(90,0)(90,0)(90,40)
\put(50,20){\makebox(0,0)[cc]{\footnotesize $K'$}}
\put(0,-5){\makebox(0,0)[cc]{$D'$}}
\end{picture}}
\end{picture}
\end{center}
\caption{Equivalent diagrams of $D$ and $D'$.} \label{fig7} 
\end{figure}
Observe that r.c.c. at regions $\{R_i\}_{i=1}^{n} \cup \{R'_i\}_{i=1}^{m}$ in $D$ results in $D'$ and r.c.c. at same set of regions in $D'$ results in $D$. Thus for any two knots $K$ and $K'$, it is always possible to find equivalent diagrams $D$ of $K$ and $D'$ of $K'$ such that $D$ can be converted into $D'$ via finite number of region crossing changes. So we define for any two knots $K$, $K'$ distance between $K, K'$ by r.c.c as, $d_R(K,K')$ = Minimum no. of r.c.c. required to convert all such diagrams $D$ into $D'$.

Gordian complex of knots by region crossing change, $\mathcal{G}_R$, is similarly defined as in the case of other local moves, i.e., \emph{vertex set} of $\mathcal{G}_R$ is the set of all knot isotopy classes and, \emph{n-simplex} in $\mathcal{G}_R$ is the collection of knots $\{K_0,...,K_n\}$ such that $d_R(K_i,K_j)=1$ for all $i\neq j \in \{0,1,\ldots,n\}$. Denote unknot by $U$. 

\begin{example}
All pairs of knots $\{U, K\}$ for non trivial knot $K$ have $d_R(U,K)=1$ and hence form 1-simplex in $\mathcal{G}_R$.
\end{example}

\begin{remark}
For any two distinct knots $K$ and $K'$, we have $d_R(K,K') \leq d_R(K,U)+d_R(U,K') = 2$, thus $\mathcal{G}_R$ is bounded complex in the sense that any two vertices can be joined by a path having at most two edges. 
\end{remark}

Now we describe properties of complex $\mathcal{G}_R$. 

\begin{theorem}\label{thm1}
For any knot $K_{0}$ and any integer $n \geq 1$ the exists an  $n$-simplex in $\mathcal G_{R}$ with a vertex $K_{0}$. 
\end{theorem}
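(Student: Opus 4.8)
The plan is to build, starting from any knot $K_0$, an infinite family $\{K_0, K_1, K_2, \ldots\}$ of pairwise distinct knots with $d_R(K_i, K_j) = 1$ for all $i \neq j$, which immediately yields an $n$-simplex on $K_0$ for every $n \geq 1$. The key structural observation I would exploit is the one already recorded in the excerpt: by Aida's theorem there is a diagram $D_0$ of $K_0$ together with a region $R$ such that a single r.c.c. at $R$ unknots $D_0$; equivalently, there is a diagram $\widehat{U}$ of the unknot and a region $R$ in it such that one r.c.c. at $R$ produces $K_0$. So the genuine task is to produce, from a single unknot diagram, many knots that are all pairwise r.c.c.-distance one. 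The natural device is a ``twist-region'' construction: take a standard diagram, insert into one strand a long sequence of clasps/twists indexed by a parameter $k$, and arrange that r.c.c. at a single fixed region $R_k$ toggles all of these twists simultaneously (this is exactly the phenomenon that makes r.c.c. a strictly ``bigger'' move than a crossing change — one region can flip many crossings at once). I would set $K_k$ to be the knot obtained by performing this twisted construction so that $K_k$ and $K_0$ differ by one r.c.c., and additionally so that $K_i$ and $K_j$ differ by one r.c.c. at a single region for every pair $i \neq j$.

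Concretely, I would first reduce to the case $K_0 = U$ by the connect-sum / diagram-replacement argument: fix a diagram $D_0$ of $K_0$ and, in a small disk meeting the diagram in a trivial arc, splice in a tangle $T_k$; let $K_k = K_0 \mathbin{\#} (\text{knot determined by } T_k)$, realized by a diagram in which there is one region $R_k$ whose boundary crossings are exactly those of the spliced-in tangle. Then r.c.c. at $R_k$ on the diagram of $K_k$ changes all the tangle crossings and, by Shimizu's realization of crossing changes via r.c.c. together with the flexibility of choosing equivalent diagrams as in Figure~\ref{fig7}, can be arranged to carry $K_k$ to $K_j$ for the appropriate partner. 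The design constraint is to choose the family $\{T_k\}$ so that (a) all the $K_k$ are mutually non-isotopic, and (b) for each pair $i \neq j$ there is a \emph{single} region whose r.c.c. converts the diagram of $K_i$ to that of $K_j$ — the cleanest way is to make each $K_k$ differ from the unknot (hence from $K_0$ in the reduced picture) by one r.c.c., and then glue the two unknotting regions as in the proof of the remark so that $d_R(K_i, K_j) \le d_R(K_i, U) + 0$? — no: more carefully, I would instead directly build the pairwise moves, following the Hirasawa–Uchida template, by nesting the $k$-th twist family inside the $(k{+}1)$-st so that a single region separates level $i$ from level $j$.

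For the distinctness in (a) I would use the invariant the excerpt advertises: Kawauchi's invariant (the ``warping degree''/Kauffman-type invariant referenced before Theorem~\ref{thm1}), choosing the twist parameters so that this invariant takes infinitely many distinct values on the family; classical invariants such as the determinant or the Alexander polynomial of the spliced tangle would also serve, provided they survive the connect sum with $K_0$ and are insensitive to exactly which r.c.c. region we used. The main obstacle — and the step I expect to require the most care — is (b): verifying that the \emph{minimal} number of r.c.c.'s between $K_i$ and $K_j$ is exactly $1$ and not $0$, i.e., that consecutive members of the family are genuinely non-isotopic yet one region crossing change apart. The ``$\le 1$'' direction is the combinatorial heart: one must exhibit, after passing to suitable equivalent diagrams, a single region whose boundary crossings are precisely the set of crossings that must be flipped to pass from $K_i$ to $K_j$; this is where the planar/checkerboard structure of the diagram and Shimizu's region-crossing-change formalism must be invoked delicately, and where the construction has to be engineered (rather than found). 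Once the family and the single-region moves are in place, the conclusion that $\{K_0, \ldots, K_n\}$ spans an $n$-simplex in $\mathcal{G}_R$ for every $n$ is immediate from the definition.
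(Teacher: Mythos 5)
Your overall strategy coincides with the paper's: reduce to $K_0=U$ by connected sums, construct a family of diagrams in which a region crossing change at a single designated region carries any member to any other member, and certify that the members are pairwise distinct with a polynomial invariant. But as written there is a genuine gap, which you yourself flag: the pairwise condition (b) is never actually established. The assertion that the family can be ``engineered'' by nesting twist families so that a single region separates level $i$ from level $j$ is precisely the content that has to be proved, and nothing in the proposal exhibits such diagrams or verifies that one r.c.c.\ converts a diagram of $K_i$ into a diagram of $K_j$ for every pair $i\neq j$. (Indeed, you briefly try to get the pairwise bound from $d_R(K_i,U)=d_R(U,K_j)=1$, notice mid-sentence that this only yields distance at most two, and then defer to an unspecified nesting.) In the paper this step is carried by an explicit family $K_m$ (Fig.~\ref{fig8}) built from $m$ identical blocks, arranged so that r.c.c.\ at the region $R_i$ of $K_m$ collapses everything from the $i$-th block onward and produces exactly $K_{i-1}$; that concrete picture, with its designated regions, is what your splice-a-tangle description lacks.

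The second gap is distinctness. You propose to ``choose the twist parameters'' so that some invariant (determinant, Alexander polynomial, or ``Kawauchi's invariant'' --- which in this paper means the $0$-th coefficient polynomial $c_0$ of the HOMFLY polynomial, not a warping-degree-type quantity) takes pairwise distinct values, but no computation is offered, and once the diagrams are rigid enough to satisfy (b) you are no longer free to tune parameters: the family is forced, and one must then verify nontriviality of the resulting knots. The paper does this by a skein computation yielding the recursion $c_0(L_m;x)=1-(x-1)^2\,c_0(L_{m-1};x)$ together with the normalization $c_0(L;1)=1$ for every knot $L$, from which the maximal degrees of the $c_0(K_m;x)$ strictly increase in $m$, so the $K_m$ are pairwise distinct. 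Until you specify the diagrams and carry out (or replace) such a distinctness computation, the proposal remains a plan rather than a proof, even though the plan itself is essentially the one the paper executes.
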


\begin{proof}
It is enough to prove the statement for the case the unknot, $K_0 = U$. The general statement will follow by taking connected sums. Consider the family of knots $\sigma_n = \{K_0, K_1, \ldots, K_n \}$ where $K_m$ for $m=1,2, \ldots, n$ is given by Fig.~\ref{fig8}.  
\begin{figure}
\unitlength=.28mm
\centering 
\begin{picture}(0,110)(0,0)
\put(-190,0){\begin{picture}(150,0)
\thicklines
\qbezier(-15,0)(-15,0)(130,0)
\qbezier(-15,0)(-20,0)(-20,5)
\qbezier(-20,5)(-20,5)(-20,40)
\qbezier(-20,60)(-20,60)(-20,95)
\qbezier(-20,95)(-20,100)(-15,100)
\qbezier(-15,100)(-15,100)(85,100)
\qbezier(85,100)(90,100)(90,95)
\qbezier(90,95)(90,95)(90,65)
\qbezier(90,55)(90,55)(90,25)
\qbezier(90,15)(90,10)(95,10)
\qbezier(95,10)(95,10)(130,10)
\qbezier(-20,40)(-20,40)(0,60)
\qbezier(-20,60)(-20,60)(-14,54)
\qbezier(-6,46)(-6,46)(0,40)
\qbezier(0,40)(0,40)(20,60)
\qbezier(0,60)(0,60)(6,54)
\qbezier(14,46)(14,46)(20,40)
\put(6,54){\vector(-1,1){6}}
\put(14,54){\vector(1,1){6}}
\qbezier(20,60)(20,60)(20,85)
\qbezier(20,85)(20,90)(25,90)
\qbezier(25,90)(25,90)(65,90)
\qbezier(65,90)(70,90)(70,85)
\qbezier(70,85)(70,85)(70,65)
\qbezier(20,40)(20,40)(20,15)
\qbezier(20,15)(20,10)(25,10)
\qbezier(25,10)(25,10)(65,10)
\qbezier(65,10)(70,10)(70,15)
\qbezier(70,25)(70,25)(70,55)
\qbezier(130,100)(130,100)(115,100)
\qbezier(115,100)(110,100)(110,95)
\qbezier(110,95)(110,95)(110,85)
\qbezier(110,85)(110,80)(105,80)
\qbezier(105,80)(105,80)(95,80)
\qbezier(85,80)(85,80)(75,80)
\qbezier(65,80)(65,80)(45,80)
\qbezier(45,80)(40,80)(40,75)
\qbezier(40,75)(40,75)(40,60)
\qbezier(40,60)(40,60)(60,40)
\qbezier(60,40)(60,40)(65,40)
\qbezier(75,40)(75,40)(85,40)
\qbezier(95,40)(100,45)(100,45)
\qbezier(100,45)(100,45)(100,55)
\qbezier(100,55)(100,60)(95,60)
\qbezier(95,60)(95,60)(60,60)
\qbezier(60,60)(60,60)(54,54)
\qbezier(46,46)(46,46)(40,40)
\qbezier(40,40)(40,40)(40,25)
\qbezier(40,25)(40,20)(45,20)
\qbezier(45,20)(45,20)(115,20)
\qbezier(115,20)(120,20)(120,25)
\qbezier(120,25)(120,25)(120,85)
\qbezier(120,85)(120,90)(125,90)
\qbezier(125,90)(125,90)(130,90)
\put(80,70){\makebox(0,0)[cc]{$R_{1}$}}
\end{picture}}
\put(-90,0){\begin{picture}(0,100)
\thicklines
\qbezier(20,0)(20,0)(130,0)
\qbezier(20,100)(20,100)(85,100)
\qbezier(85,100)(90,100)(90,95)
\qbezier(90,95)(90,95)(90,65)
\qbezier(90,55)(90,55)(90,25)
\qbezier(90,15)(90,10)(95,10)
\qbezier(95,10)(95,10)(130,10)
\qbezier(20,60)(20,60)(20,85)
\qbezier(20,85)(20,90)(25,90)
\qbezier(25,90)(25,90)(65,90)
\qbezier(65,90)(70,90)(70,85)
\qbezier(70,85)(70,85)(70,65)
\qbezier(20,10)(20,10)(65,10)
\qbezier(65,10)(70,10)(70,15)
\qbezier(70,25)(70,25)(70,55)
\qbezier(130,100)(130,100)(115,100)
\qbezier(115,100)(110,100)(110,95)
\qbezier(110,95)(110,95)(110,85)
\qbezier(110,85)(110,80)(105,80)
\qbezier(105,80)(105,80)(95,80)
\qbezier(85,80)(85,80)(75,80)
\qbezier(65,80)(65,80)(45,80)
\qbezier(45,80)(40,80)(40,75)
\qbezier(40,75)(40,75)(40,60)
\qbezier(40,60)(40,60)(60,40)
\qbezier(60,40)(60,40)(65,40)
\qbezier(75,40)(75,40)(85,40)
\qbezier(95,40)(100,45)(100,45)
\qbezier(100,45)(100,45)(100,55)
\qbezier(100,55)(100,60)(95,60)
\qbezier(95,60)(95,60)(60,60)
\qbezier(60,60)(60,60)(54,54)
\qbezier(46,46)(46,46)(40,40)
\qbezier(40,40)(40,40)(40,25)
\qbezier(40,25)(40,20)(45,20)
\qbezier(45,20)(45,20)(115,20)
\qbezier(115,20)(120,20)(120,25)
\qbezier(120,25)(120,25)(120,85)
\qbezier(120,85)(120,90)(125,90)
\qbezier(125,90)(125,90)(130,90)
\put(80,70){\makebox(0,0)[cc]{$R_{2}$}}
\put(150,100){\makebox(0,0)[cc]{$\cdots$}}
\put(150,90){\makebox(0,0)[cc]{$\cdots$}}
\put(150,10){\makebox(0,0)[cc]{$\cdots$}}
\put(150,0){\makebox(0,0)[cc]{$\cdots$}}
\end{picture}}
\put(60,0){\begin{picture}(0,100)
\thicklines
\qbezier(20,0)(20,0)(130,0)
\qbezier(20,100)(20,100)(85,100)
\qbezier(85,100)(90,100)(90,95)
\qbezier(90,95)(90,95)(90,65)
\qbezier(90,55)(90,55)(90,25)
\qbezier(90,15)(90,10)(95,10)
\qbezier(95,10)(95,10)(125,10)
\qbezier(20,90)(20,90)(65,90)
\qbezier(65,90)(70,90)(70,85)
\qbezier(70,85)(70,85)(70,65)
\qbezier(20,10)(20,10)(65,10)
\qbezier(65,10)(70,10)(70,15)
\qbezier(70,25)(70,25)(70,55)
\qbezier(130,100)(130,100)(115,100)
\qbezier(115,100)(110,100)(110,95)
\qbezier(110,95)(110,95)(110,85)
\qbezier(110,85)(110,80)(105,80)
\qbezier(105,80)(105,80)(95,80)
\qbezier(85,80)(85,80)(75,80)
\qbezier(65,80)(65,80)(45,80)
\qbezier(45,80)(40,80)(40,75)
\qbezier(40,75)(40,75)(40,60)
\qbezier(40,60)(40,60)(60,40)
\qbezier(60,40)(60,40)(65,40)
\qbezier(75,40)(75,40)(85,40)
\qbezier(95,40)(100,45)(100,45)
\qbezier(100,45)(100,45)(100,55)
\qbezier(100,55)(100,60)(95,60)
\qbezier(95,60)(95,60)(60,60)
\qbezier(60,60)(60,60)(54,54)
\qbezier(46,46)(46,46)(40,40)
\qbezier(40,40)(40,40)(40,25)
\qbezier(40,25)(40,20)(45,20)
\qbezier(45,20)(45,20)(115,20)
\qbezier(115,20)(120,20)(120,25)
\qbezier(120,25)(120,25)(120,85)
\qbezier(120,85)(120,90)(125,90)
\qbezier(125,90)(130,90)(130,85)
\put(80,70){\makebox(0,0)[cc]{$R_{m}$}}
\qbezier(125,10)(130,10)(130,15)
\qbezier(130,15)(130,15)(130,85)
\qbezier(130,0)(140,0)(140,10)
\qbezier(140,10)(140,10)(140,90)
\qbezier(130,100)(140,100)(140,90)
\end{picture}}
\end{picture}
\caption{Knot $K_{m}$.} \label{fig8} 
\end{figure}
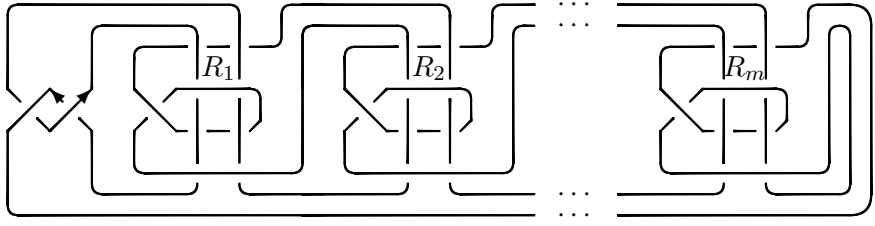 
It is easy to see that the diagram of $K_{m}$ has $m$ equal blocks. 
For any fix $m=1,2, \ldots,n$ observe from the diagram of $K_m$ that, r.c.c at region $R_i$ in $K_m$ results in the knot $K_{i-1}$ for $i=1,2,\ldots,m$ respectively. Therefore $d_R(K_i, K_m) \leq 1$ for $i=0,1, \ldots,m-1$, now varying $m$ from $1$ to $n$ we get $d_R(K_i, K_j) \leq 1$ for $i\neq j \in \{0,1, \ldots,n\}$. To show that $\sigma_n$ is an $n$-simplex in $\mathcal{G}_R$, it is enough to prove that $\{K_m\}_{m=0}^n$ are distinct knots.

In~\cite{j} A.~Kawauchi introduced the sequence of polynomial invariants \linebreak $\{c_n(L; x)\}_{n\geq 0}$ of oriented link $L$ which arise from a series representation of HOMFLY polynomial $P_L(l,m)$ multiplied by $\left(lm\right)^{r-1}$ where $L$ is assumed to be a $r$ component link. $\{c_n(L; x)\}_{n\geq 0}$ are called the \emph{$n$-th coefficient polynomial} of the HOMFLY polynomial $P_L(l,m)$. We compute $c_0(L; x)$ by considering $L$ as the knot $K_m$ and use it to show that knots $K_0, K_1, \ldots, K_n$ are distinct. $c_0(L; x)$ can be computed using the following rules as follows from \cite[Theorem~1.1]{j}.
\begin{enumerate}
\item $c_0(U; x)= 1$.
\item $x  \cdot c_0(L_+;x) - c_0(L_-;x) = c_0(L_0;x)$, where $L_+$, $L_-$ are knots and $L_0$ is the link as shown in the Fig.~\ref{fig9}.
\item For a 2-component link $L = K_1 \cup K_2$ having linking number $\lambda$, $c_0(L;x) = (x-1) x^{-\lambda} \cdot c_0(K_1;x)  \cdot c_0(K_2;x)$.
\end{enumerate}
%%%
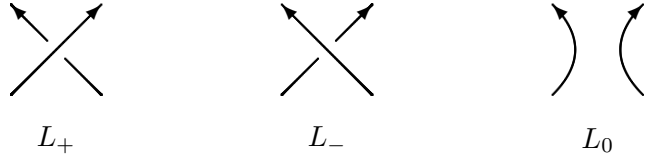
\begin{figure}[!ht]
\centering 
\unitlength=0.6mm
\begin{picture}(0,30)(0,0)
\thicklines
\qbezier(-70,10)(-70,10)(-50,30)
\qbezier(-70,30)(-70,30)(-62,22) 
\qbezier(-50,10)(-50,10)(-58,18)
\put(-65,25){\vector(-1,1){5}}
\put(-55,25){\vector(1,1){5}}
\qbezier(10,10)(10,10)(-10,30)
\qbezier(10,30)(10,30)(2,22) 
\qbezier(-10,10)(-10,10)(-2,18)
\put(-5,25){\vector(-1,1){5}}
\put(5,25){\vector(1,1){5}}
\put(-60,0){\makebox(0,0)[cc]{$L_{+}$}}
\put(0,0){\makebox(0,0)[cc]{$L_{-}$}}
\put(60,0){\makebox(0,0)[cc]{$L_{0}$}}
\qbezier(50,10)(60,20)(50,30)
\qbezier(70,10)(60,20)(70,30) 
\put(55,25){\vector(-1,1){5}}
\put(65,25){\vector(1,1){5}}
\end{picture}
\caption{Skein triple.} \label{fig9}
\end{figure}

Now we consider $K_{m}$ as an oriented knot with the orientation given by vectors presented in Fig.~\ref{fig8}. Consider $K_{m}$ as a knot in the skein triple which with the respect to the crossing indicated by vectors. Two other knots and links form the skein triple, namely knot $K_{m_{-}}$ and link $K_{m_{0}}$ are presented in Fig.~\ref{fig10} and Fig.~\ref{fig11} respectively. Using the rule ($2$) we compute 
$$
c_{0} (K_{m}) = \frac{1}{x} \left( c_{0} (K_{m_{-}}) + c_{0} (K_{m_{0}}) \right). 
$$
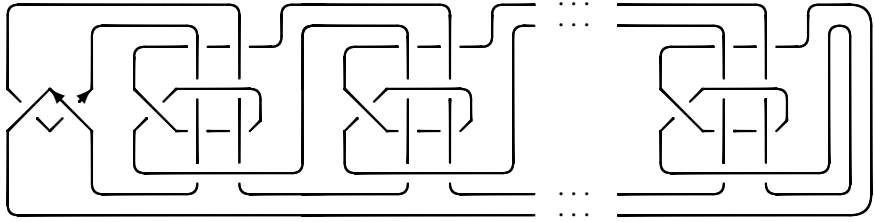
\begin{figure}
\unitlength=.28mm
\centering 
\begin{picture}(0,110)(0,0)
\put(-190,0){\begin{picture}(150,0)
\thicklines
\qbezier(-15,0)(-15,0)(130,0)
\qbezier(-15,0)(-20,0)(-20,5)
\qbezier(-20,5)(-20,5)(-20,40)
\qbezier(-20,60)(-20,60)(-20,95)
\qbezier(-20,95)(-20,100)(-15,100)
\qbezier(-15,100)(-15,100)(85,100)
\qbezier(85,100)(90,100)(90,95)
\qbezier(90,95)(90,95)(90,65)
\qbezier(90,55)(90,55)(90,25)
\qbezier(90,15)(90,10)(95,10)
\qbezier(95,10)(95,10)(130,10)
\qbezier(-20,40)(-20,40)(0,60)
\qbezier(-20,60)(-20,60)(-14,54)
\qbezier(-6,46)(-6,46)(0,40)
\qbezier(0,60)(0,60)(20,40)
\qbezier(0,40)(0,40)(6,46)
\qbezier(14,54)(14,54)(20,60)
\put(6,54){\vector(-1,1){6}}
\put(14,54){\vector(1,1){6}}
\qbezier(20,60)(20,60)(20,85)
\qbezier(20,85)(20,90)(25,90)
\qbezier(25,90)(25,90)(65,90)
\qbezier(65,90)(70,90)(70,85)
\qbezier(70,85)(70,85)(70,65)
\qbezier(20,40)(20,40)(20,15)
\qbezier(20,15)(20,10)(25,10)
\qbezier(25,10)(25,10)(65,10)
\qbezier(65,10)(70,10)(70,15)
\qbezier(70,25)(70,25)(70,55)
\qbezier(130,100)(130,100)(115,100)
\qbezier(115,100)(110,100)(110,95)
\qbezier(110,95)(110,95)(110,85)
\qbezier(110,85)(110,80)(105,80)
\qbezier(105,80)(105,80)(95,80)
\qbezier(85,80)(85,80)(75,80)
\qbezier(65,80)(65,80)(45,80)
\qbezier(45,80)(40,80)(40,75)
\qbezier(40,75)(40,75)(40,60)
\qbezier(40,60)(40,60)(60,40)
\qbezier(60,40)(60,40)(65,40)
\qbezier(75,40)(75,40)(85,40)
\qbezier(95,40)(100,45)(100,45)
\qbezier(100,45)(100,45)(100,55)
\qbezier(100,55)(100,60)(95,60)
\qbezier(95,60)(95,60)(60,60)
\qbezier(60,60)(60,60)(54,54)
\qbezier(46,46)(46,46)(40,40)
\qbezier(40,40)(40,40)(40,25)
\qbezier(40,25)(40,20)(45,20)
\qbezier(45,20)(45,20)(115,20)
\qbezier(115,20)(120,20)(120,25)
\qbezier(120,25)(120,25)(120,85)
\qbezier(120,85)(120,90)(125,90)
\qbezier(125,90)(125,90)(130,90)
%\put(80,70){\makebox(0,0)[cc]{$R_{1}$}}
\end{picture}}
\put(-90,0){\begin{picture}(0,100)
\thicklines
\qbezier(20,0)(20,0)(130,0)
\qbezier(20,100)(20,100)(85,100)
\qbezier(85,100)(90,100)(90,95)
\qbezier(90,95)(90,95)(90,65)
\qbezier(90,55)(90,55)(90,25)
\qbezier(90,15)(90,10)(95,10)
\qbezier(95,10)(95,10)(130,10)
\qbezier(20,60)(20,60)(20,85)
\qbezier(20,85)(20,90)(25,90)
\qbezier(25,90)(25,90)(65,90)
\qbezier(65,90)(70,90)(70,85)
\qbezier(70,85)(70,85)(70,65)
\qbezier(20,10)(20,10)(65,10)
\qbezier(65,10)(70,10)(70,15)
\qbezier(70,25)(70,25)(70,55)
\qbezier(130,100)(130,100)(115,100)
\qbezier(115,100)(110,100)(110,95)
\qbezier(110,95)(110,95)(110,85)
\qbezier(110,85)(110,80)(105,80)
\qbezier(105,80)(105,80)(95,80)
\qbezier(85,80)(85,80)(75,80)
\qbezier(65,80)(65,80)(45,80)
\qbezier(45,80)(40,80)(40,75)
\qbezier(40,75)(40,75)(40,60)
\qbezier(40,60)(40,60)(60,40)
\qbezier(60,40)(60,40)(65,40)
\qbezier(75,40)(75,40)(85,40)
\qbezier(95,40)(100,45)(100,45)
\qbezier(100,45)(100,45)(100,55)
\qbezier(100,55)(100,60)(95,60)
\qbezier(95,60)(95,60)(60,60)
\qbezier(60,60)(60,60)(54,54)
\qbezier(46,46)(46,46)(40,40)
\qbezier(40,40)(40,40)(40,25)
\qbezier(40,25)(40,20)(45,20)
\qbezier(45,20)(45,20)(115,20)
\qbezier(115,20)(120,20)(120,25)
\qbezier(120,25)(120,25)(120,85)
\qbezier(120,85)(120,90)(125,90)
\qbezier(125,90)(125,90)(130,90)
%\put(80,70){\makebox(0,0)[cc]{$R_{2}$}}
%
\put(150,100){\makebox(0,0)[cc]{$\cdots$}}
\put(150,90){\makebox(0,0)[cc]{$\cdots$}}
\put(150,10){\makebox(0,0)[cc]{$\cdots$}}
\put(150,0){\makebox(0,0)[cc]{$\cdots$}}
\end{picture}}
\put(60,0){\begin{picture}(0,100)
\thicklines
\qbezier(20,0)(20,0)(130,0)
\qbezier(20,100)(20,100)(85,100)
\qbezier(85,100)(90,100)(90,95)
\qbezier(90,95)(90,95)(90,65)
\qbezier(90,55)(90,55)(90,25)
\qbezier(90,15)(90,10)(95,10)
\qbezier(95,10)(95,10)(125,10)
\qbezier(20,90)(20,90)(65,90)
\qbezier(65,90)(70,90)(70,85)
\qbezier(70,85)(70,85)(70,65)
\qbezier(20,10)(20,10)(65,10)
\qbezier(65,10)(70,10)(70,15)
\qbezier(70,25)(70,25)(70,55)
\qbezier(130,100)(130,100)(115,100)
\qbezier(115,100)(110,100)(110,95)
\qbezier(110,95)(110,95)(110,85)
\qbezier(110,85)(110,80)(105,80)
\qbezier(105,80)(105,80)(95,80)
\qbezier(85,80)(85,80)(75,80)
\qbezier(65,80)(65,80)(45,80)
\qbezier(45,80)(40,80)(40,75)
\qbezier(40,75)(40,75)(40,60)
\qbezier(40,60)(40,60)(60,40)
\qbezier(60,40)(60,40)(65,40)
\qbezier(75,40)(75,40)(85,40)
\qbezier(95,40)(100,45)(100,45)
\qbezier(100,45)(100,45)(100,55)
\qbezier(100,55)(100,60)(95,60)
\qbezier(95,60)(95,60)(60,60)
\qbezier(60,60)(60,60)(54,54)
\qbezier(46,46)(46,46)(40,40)
\qbezier(40,40)(40,40)(40,25)
\qbezier(40,25)(40,20)(45,20)
\qbezier(45,20)(45,20)(115,20)
\qbezier(115,20)(120,20)(120,25)
\qbezier(120,25)(120,25)(120,85)
\qbezier(120,85)(120,90)(125,90)
\qbezier(125,90)(130,90)(130,85)
%\put(80,70){\makebox(0,0)[cc]{$R_{m}$}}
%
\qbezier(125,10)(130,10)(130,15)
\qbezier(130,15)(130,15)(130,85)
\qbezier(130,0)(140,0)(140,10)
\qbezier(140,10)(140,10)(140,90)
\qbezier(130,100)(140,100)(140,90)
\end{picture}}
\end{picture}
%\end{center}
\caption{Knot $K_{m_{-}}$.} \label{fig10} 
\end{figure}
In is easy to see that $K_{m_{-}}$ is the unknot, and $K_{m_{0}}$ is a 2-component link with the linking number equals to zero. 
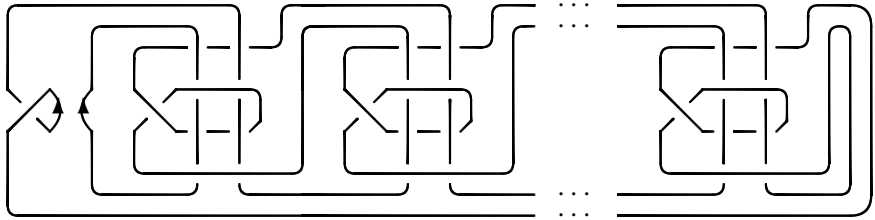
\begin{figure}
\unitlength=.28mm
\centering 
\begin{picture}(0,110)(0,0)
\put(-190,0){\begin{picture}(150,0)
\thicklines
\qbezier(-15,0)(-15,0)(130,0)
\qbezier(-15,0)(-20,0)(-20,5)
\qbezier(-20,5)(-20,5)(-20,40)
\qbezier(-20,60)(-20,60)(-20,95)
\qbezier(-20,95)(-20,100)(-15,100)
\qbezier(-15,100)(-15,100)(85,100)
\qbezier(85,100)(90,100)(90,95)
\qbezier(90,95)(90,95)(90,65)
\qbezier(90,55)(90,55)(90,25)
\qbezier(90,15)(90,10)(95,10)
\qbezier(95,10)(95,10)(130,10)
\qbezier(-20,40)(-20,40)(0,60)
\qbezier(-20,60)(-20,60)(-14,54)
\qbezier(-6,46)(-6,46)(0,40)
\qbezier(0,40)(10,50)(0,60)
\qbezier(20,40)(10,50)(20,60)
\put(4,50){\vector(0,1){6}}
\put(16,50){\vector(0,1){6}}
\qbezier(20,60)(20,60)(20,85)
\qbezier(20,85)(20,90)(25,90)
\qbezier(25,90)(25,90)(65,90)
\qbezier(65,90)(70,90)(70,85)
\qbezier(70,85)(70,85)(70,65)
\qbezier(20,40)(20,40)(20,15)
\qbezier(20,15)(20,10)(25,10)
\qbezier(25,10)(25,10)(65,10)
\qbezier(65,10)(70,10)(70,15)
\qbezier(70,25)(70,25)(70,55)
\qbezier(130,100)(130,100)(115,100)
\qbezier(115,100)(110,100)(110,95)
\qbezier(110,95)(110,95)(110,85)
\qbezier(110,85)(110,80)(105,80)
\qbezier(105,80)(105,80)(95,80)
\qbezier(85,80)(85,80)(75,80)
\qbezier(65,80)(65,80)(45,80)
\qbezier(45,80)(40,80)(40,75)
\qbezier(40,75)(40,75)(40,60)
\qbezier(40,60)(40,60)(60,40)
\qbezier(60,40)(60,40)(65,40)
\qbezier(75,40)(75,40)(85,40)
\qbezier(95,40)(100,45)(100,45)
\qbezier(100,45)(100,45)(100,55)
\qbezier(100,55)(100,60)(95,60)
\qbezier(95,60)(95,60)(60,60)
\qbezier(60,60)(60,60)(54,54)
\qbezier(46,46)(46,46)(40,40)
\qbezier(40,40)(40,40)(40,25)
\qbezier(40,25)(40,20)(45,20)
\qbezier(45,20)(45,20)(115,20)
\qbezier(115,20)(120,20)(120,25)
\qbezier(120,25)(120,25)(120,85)
\qbezier(120,85)(120,90)(125,90)
\qbezier(125,90)(125,90)(130,90)
%\put(80,70){\makebox(0,0)[cc]{$R_{1}$}}
\end{picture}}
\put(-90,0){\begin{picture}(0,100)
\thicklines
\qbezier(20,0)(20,0)(130,0)
\qbezier(20,100)(20,100)(85,100)
\qbezier(85,100)(90,100)(90,95)
\qbezier(90,95)(90,95)(90,65)
\qbezier(90,55)(90,55)(90,25)
\qbezier(90,15)(90,10)(95,10)
\qbezier(95,10)(95,10)(130,10)
\qbezier(20,60)(20,60)(20,85)
\qbezier(20,85)(20,90)(25,90)
\qbezier(25,90)(25,90)(65,90)
\qbezier(65,90)(70,90)(70,85)
\qbezier(70,85)(70,85)(70,65)
\qbezier(20,10)(20,10)(65,10)
\qbezier(65,10)(70,10)(70,15)
\qbezier(70,25)(70,25)(70,55)
\qbezier(130,100)(130,100)(115,100)
\qbezier(115,100)(110,100)(110,95)
\qbezier(110,95)(110,95)(110,85)
\qbezier(110,85)(110,80)(105,80)
\qbezier(105,80)(105,80)(95,80)
\qbezier(85,80)(85,80)(75,80)
\qbezier(65,80)(65,80)(45,80)
\qbezier(45,80)(40,80)(40,75)
\qbezier(40,75)(40,75)(40,60)
\qbezier(40,60)(40,60)(60,40)
\qbezier(60,40)(60,40)(65,40)
\qbezier(75,40)(75,40)(85,40)
\qbezier(95,40)(100,45)(100,45)
\qbezier(100,45)(100,45)(100,55)
\qbezier(100,55)(100,60)(95,60)
\qbezier(95,60)(95,60)(60,60)
\qbezier(60,60)(60,60)(54,54)
\qbezier(46,46)(46,46)(40,40)
\qbezier(40,40)(40,40)(40,25)
\qbezier(40,25)(40,20)(45,20)
\qbezier(45,20)(45,20)(115,20)
\qbezier(115,20)(120,20)(120,25)
\qbezier(120,25)(120,25)(120,85)
\qbezier(120,85)(120,90)(125,90)
\qbezier(125,90)(125,90)(130,90)
%\put(80,70){\makebox(0,0)[cc]{$R_{2}$}}
%
\put(150,100){\makebox(0,0)[cc]{$\cdots$}}
\put(150,90){\makebox(0,0)[cc]{$\cdots$}}
\put(150,10){\makebox(0,0)[cc]{$\cdots$}}
\put(150,0){\makebox(0,0)[cc]{$\cdots$}}
\end{picture}}
\put(60,0){\begin{picture}(0,100)
\thicklines
\qbezier(20,0)(20,0)(130,0)
\qbezier(20,100)(20,100)(85,100)
\qbezier(85,100)(90,100)(90,95)
\qbezier(90,95)(90,95)(90,65)
\qbezier(90,55)(90,55)(90,25)
\qbezier(90,15)(90,10)(95,10)
\qbezier(95,10)(95,10)(125,10)
\qbezier(20,90)(20,90)(65,90)
\qbezier(65,90)(70,90)(70,85)
\qbezier(70,85)(70,85)(70,65)
\qbezier(20,10)(20,10)(65,10)
\qbezier(65,10)(70,10)(70,15)
\qbezier(70,25)(70,25)(70,55)
\qbezier(130,100)(130,100)(115,100)
\qbezier(115,100)(110,100)(110,95)
\qbezier(110,95)(110,95)(110,85)
\qbezier(110,85)(110,80)(105,80)
\qbezier(105,80)(105,80)(95,80)
\qbezier(85,80)(85,80)(75,80)
\qbezier(65,80)(65,80)(45,80)
\qbezier(45,80)(40,80)(40,75)
\qbezier(40,75)(40,75)(40,60)
\qbezier(40,60)(40,60)(60,40)
\qbezier(60,40)(60,40)(65,40)
\qbezier(75,40)(75,40)(85,40)
\qbezier(95,40)(100,45)(100,45)
\qbezier(100,45)(100,45)(100,55)
\qbezier(100,55)(100,60)(95,60)
\qbezier(95,60)(95,60)(60,60)
\qbezier(60,60)(60,60)(54,54)
\qbezier(46,46)(46,46)(40,40)
\qbezier(40,40)(40,40)(40,25)
\qbezier(40,25)(40,20)(45,20)
\qbezier(45,20)(45,20)(115,20)
\qbezier(115,20)(120,20)(120,25)
\qbezier(120,25)(120,25)(120,85)
\qbezier(120,85)(120,90)(125,90)
\qbezier(125,90)(130,90)(130,85)
%\put(80,70){\makebox(0,0)[cc]{$R_{m}$}}
%
\qbezier(125,10)(130,10)(130,15)
\qbezier(130,15)(130,15)(130,85)
\qbezier(130,0)(140,0)(140,10)
\qbezier(140,10)(140,10)(140,90)
\qbezier(130,100)(140,100)(140,90)
\end{picture}}
\end{picture}
\caption{Knot $K_{m_{0}}$.} \label{fig11} 
\end{figure}
Using rule ($3$) we get $c_0(K_{m_{0}})= (x-1) \cdot c_0(L_m)$, where $L_m$ is the knot which is a non-trivial component of the link $K_{m_{0}}$. The knot $L_{m}$ is presented in Fig.~\ref{fig12}. 
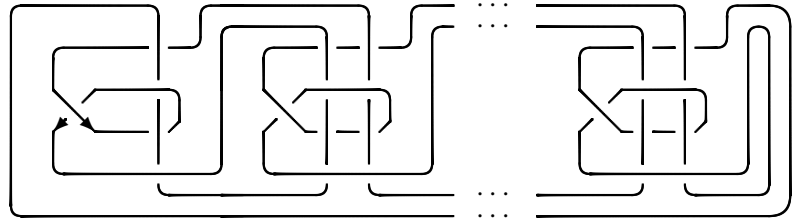
\begin{figure}
\unitlength=.28mm
\centering 
\begin{picture}(0,110)(0,0)
\put(-190,0){\begin{picture}(150,0)
\thicklines
\qbezier(25,0)(25,0)(130,0)
\qbezier(25,0)(20,0)(20,5)
\qbezier(20,5)(20,5)(20,95)
\qbezier(20,95)(20,100)(25,100)
\qbezier(25,100)(25,100)(85,100)
\qbezier(85,100)(90,100)(90,95)
\qbezier(90,95)(90,95)(90,65)
\qbezier(90,55)(90,55)(90,25)
\qbezier(90,15)(90,10)(95,10)
\qbezier(95,10)(95,10)(130,10)
\qbezier(130,100)(130,100)(115,100)
\qbezier(115,100)(110,100)(110,95)
\qbezier(110,95)(110,95)(110,85)
\qbezier(110,85)(110,80)(105,80)
\qbezier(105,80)(105,80)(95,80)
\qbezier(85,80)(85,80)(45,80)
\qbezier(45,80)(40,80)(40,75)
\qbezier(40,75)(40,75)(40,60)
\qbezier(40,60)(40,60)(60,40)
\qbezier(60,40)(60,40)(85,40)
\put(54,46){\vector(1,-1){6}}
\put(46,46){\vector(-1,-1){6}}
\qbezier(95,40)(100,45)(100,45)
\qbezier(100,45)(100,45)(100,55)
\qbezier(100,55)(100,60)(95,60)
\qbezier(95,60)(95,60)(60,60)
\qbezier(60,60)(60,60)(54,54)
\qbezier(46,46)(46,46)(40,40)
\qbezier(40,40)(40,40)(40,25)
\qbezier(40,25)(40,20)(45,20)
\qbezier(45,20)(45,20)(115,20)
\qbezier(115,20)(120,20)(120,25)
\qbezier(120,25)(120,25)(120,85)
\qbezier(120,85)(120,90)(125,90)
\qbezier(125,90)(125,90)(130,90)
%\put(80,70){\makebox(0,0)[cc]{$R_{1}$}}
\end{picture}}
\put(-90,0){\begin{picture}(0,100)
\thicklines
\qbezier(20,0)(20,0)(130,0)
\qbezier(20,100)(20,100)(85,100)
\qbezier(85,100)(90,100)(90,95)
\qbezier(90,95)(90,95)(90,65)
\qbezier(90,55)(90,55)(90,25)
\qbezier(90,15)(90,10)(95,10)
\qbezier(95,10)(95,10)(130,10)
\qbezier(20,60)(20,60)(20,85)
\qbezier(20,85)(20,90)(25,90)
\qbezier(25,90)(25,90)(65,90)
\qbezier(65,90)(70,90)(70,85)
\qbezier(70,85)(70,85)(70,65)
\qbezier(20,10)(20,10)(65,10)
\qbezier(65,10)(70,10)(70,15)
\qbezier(70,25)(70,25)(70,55)
\qbezier(130,100)(130,100)(115,100)
\qbezier(115,100)(110,100)(110,95)
\qbezier(110,95)(110,95)(110,85)
\qbezier(110,85)(110,80)(105,80)
\qbezier(105,80)(105,80)(95,80)
\qbezier(85,80)(85,80)(75,80)
\qbezier(65,80)(65,80)(45,80)
\qbezier(45,80)(40,80)(40,75)
\qbezier(40,75)(40,75)(40,60)
\qbezier(40,60)(40,60)(60,40)
\qbezier(60,40)(60,40)(65,40)
\qbezier(75,40)(75,40)(85,40)
\qbezier(95,40)(100,45)(100,45)
\qbezier(100,45)(100,45)(100,55)
\qbezier(100,55)(100,60)(95,60)
\qbezier(95,60)(95,60)(60,60)
\qbezier(60,60)(60,60)(54,54)
\qbezier(46,46)(46,46)(40,40)
\qbezier(40,40)(40,40)(40,25)
\qbezier(40,25)(40,20)(45,20)
\qbezier(45,20)(45,20)(115,20)
\qbezier(115,20)(120,20)(120,25)
\qbezier(120,25)(120,25)(120,85)
\qbezier(120,85)(120,90)(125,90)
\qbezier(125,90)(125,90)(130,90)
%\put(80,70){\makebox(0,0)[cc]{$R_{2}$}}
%
\put(150,100){\makebox(0,0)[cc]{$\cdots$}}
\put(150,90){\makebox(0,0)[cc]{$\cdots$}}
\put(150,10){\makebox(0,0)[cc]{$\cdots$}}
\put(150,0){\makebox(0,0)[cc]{$\cdots$}}
\end{picture}}
\put(60,0){\begin{picture}(0,100)
\thicklines
\qbezier(20,0)(20,0)(130,0)
\qbezier(20,100)(20,100)(85,100)
\qbezier(85,100)(90,100)(90,95)
\qbezier(90,95)(90,95)(90,65)
\qbezier(90,55)(90,55)(90,25)
\qbezier(90,15)(90,10)(95,10)
\qbezier(95,10)(95,10)(125,10)
\qbezier(20,90)(20,90)(65,90)
\qbezier(65,90)(70,90)(70,85)
\qbezier(70,85)(70,85)(70,65)
\qbezier(20,10)(20,10)(65,10)
\qbezier(65,10)(70,10)(70,15)
\qbezier(70,25)(70,25)(70,55)
\qbezier(130,100)(130,100)(115,100)
\qbezier(115,100)(110,100)(110,95)
\qbezier(110,95)(110,95)(110,85)
\qbezier(110,85)(110,80)(105,80)
\qbezier(105,80)(105,80)(95,80)
\qbezier(85,80)(85,80)(75,80)
\qbezier(65,80)(65,80)(45,80)
\qbezier(45,80)(40,80)(40,75)
\qbezier(40,75)(40,75)(40,60)
\qbezier(40,60)(40,60)(60,40)
\qbezier(60,40)(60,40)(65,40)
\qbezier(75,40)(75,40)(85,40)
\qbezier(95,40)(100,45)(100,45)
\qbezier(100,45)(100,45)(100,55)
\qbezier(100,55)(100,60)(95,60)
\qbezier(95,60)(95,60)(60,60)
\qbezier(60,60)(60,60)(54,54)
\qbezier(46,46)(46,46)(40,40)
\qbezier(40,40)(40,40)(40,25)
\qbezier(40,25)(40,20)(45,20)
\qbezier(45,20)(45,20)(115,20)
\qbezier(115,20)(120,20)(120,25)
\qbezier(120,25)(120,25)(120,85)
\qbezier(120,85)(120,90)(125,90)
\qbezier(125,90)(130,90)(130,85)
%\put(80,70){\makebox(0,0)[cc]{$R_{m}$}}
%
\qbezier(125,10)(130,10)(130,15)
\qbezier(130,15)(130,15)(130,85)
\qbezier(130,0)(140,0)(140,10)
\qbezier(140,10)(140,10)(140,90)
\qbezier(130,100)(140,100)(140,90)
\end{picture}}
\end{picture}
%\end{center}
\caption{Knot $L_{m}$.} \label{fig12} 
\end{figure}

Now consider $L_{m}$ as a knot in a skein triple in respect to the crossing indicated by two vectors in Fig.~\ref{fig12}. This crossing point in $L_{m}$ is negative, hence we get the relation $c_0(L_{m})= x \cdot c_0(L_{m_{+}})- c_0(L_{m_{0}})$. Here $L_{m_{+}}$ is the unknot, and  link $L_{m_{0}}$, presented in Fig.~\ref{fig13}, has two components.  
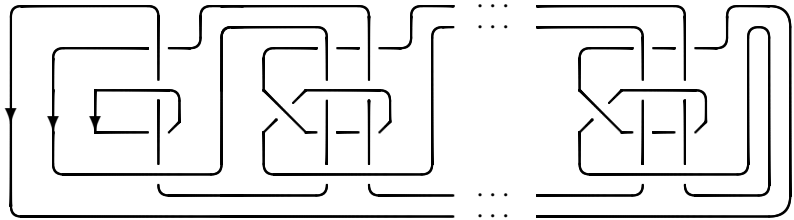
\begin{figure}
\unitlength=.28mm
\centering 
\begin{picture}(0,110)(0,0)
\put(-190,0){\begin{picture}(150,0)
\thicklines
\qbezier(25,0)(25,0)(130,0)
\qbezier(25,0)(20,0)(20,5)
\qbezier(20,5)(20,5)(20,95)
\qbezier(20,95)(20,100)(25,100)
\qbezier(25,100)(25,100)(85,100)
\qbezier(85,100)(90,100)(90,95)
\qbezier(90,95)(90,95)(90,65)
\qbezier(90,55)(90,55)(90,25)
\qbezier(90,15)(90,10)(95,10)
\qbezier(95,10)(95,10)(130,10)
\put(20,50){\vector(0,-1){6}}
\qbezier(130,100)(130,100)(115,100)
\qbezier(115,100)(110,100)(110,95)
\qbezier(110,95)(110,95)(110,85)
\qbezier(110,85)(110,80)(105,80)
\qbezier(105,80)(105,80)(95,80)
\qbezier(85,80)(85,80)(45,80)
\qbezier(45,80)(40,80)(40,75)
\qbezier(40,75)(40,75)(40,60)
\qbezier(40,60)(40,60)(40,40)
\qbezier(60,40)(60,40)(85,40)
\put(60,46){\vector(0,-1){6}}
\put(40,46){\vector(0,-1){6}}
\qbezier(95,40)(100,45)(100,45)
\qbezier(100,45)(100,45)(100,55)
\qbezier(100,55)(100,60)(95,60)
\qbezier(95,60)(95,60)(60,60)
\qbezier(60,60)(60,60)(60,40)
\qbezier(40,40)(40,40)(40,25)
\qbezier(40,25)(40,20)(45,20)
\qbezier(45,20)(45,20)(115,20)
\qbezier(115,20)(120,20)(120,25)
\qbezier(120,25)(120,25)(120,85)
\qbezier(120,85)(120,90)(125,90)
\qbezier(125,90)(125,90)(130,90)
\end{picture}}
\put(-90,0){\begin{picture}(0,100)
\thicklines
\qbezier(20,0)(20,0)(130,0)
\qbezier(20,100)(20,100)(85,100)
\qbezier(85,100)(90,100)(90,95)
\qbezier(90,95)(90,95)(90,65)
\qbezier(90,55)(90,55)(90,25)
\qbezier(90,15)(90,10)(95,10)
\qbezier(95,10)(95,10)(130,10)
\qbezier(20,60)(20,60)(20,85)
\qbezier(20,85)(20,90)(25,90)
\qbezier(25,90)(25,90)(65,90)
\qbezier(65,90)(70,90)(70,85)
\qbezier(70,85)(70,85)(70,65)
\qbezier(20,10)(20,10)(65,10)
\qbezier(65,10)(70,10)(70,15)
\qbezier(70,25)(70,25)(70,55)
\qbezier(130,100)(130,100)(115,100)
\qbezier(115,100)(110,100)(110,95)
\qbezier(110,95)(110,95)(110,85)
\qbezier(110,85)(110,80)(105,80)
\qbezier(105,80)(105,80)(95,80)
\qbezier(85,80)(85,80)(75,80)
\qbezier(65,80)(65,80)(45,80)
\qbezier(45,80)(40,80)(40,75)
\qbezier(40,75)(40,75)(40,60)
\qbezier(40,60)(40,60)(60,40)
\qbezier(60,40)(60,40)(65,40)
\qbezier(75,40)(75,40)(85,40)
\qbezier(95,40)(100,45)(100,45)
\qbezier(100,45)(100,45)(100,55)
\qbezier(100,55)(100,60)(95,60)
\qbezier(95,60)(95,60)(60,60)
\qbezier(60,60)(60,60)(54,54)
\qbezier(46,46)(46,46)(40,40)
\qbezier(40,40)(40,40)(40,25)
\qbezier(40,25)(40,20)(45,20)
\qbezier(45,20)(45,20)(115,20)
\qbezier(115,20)(120,20)(120,25)
\qbezier(120,25)(120,25)(120,85)
\qbezier(120,85)(120,90)(125,90)
\qbezier(125,90)(125,90)(130,90)
\put(150,100){\makebox(0,0)[cc]{$\cdots$}}
\put(150,90){\makebox(0,0)[cc]{$\cdots$}}
\put(150,10){\makebox(0,0)[cc]{$\cdots$}}
\put(150,0){\makebox(0,0)[cc]{$\cdots$}}
\end{picture}}
\put(60,0){\begin{picture}(0,100)
\thicklines
\qbezier(20,0)(20,0)(130,0)
\qbezier(20,100)(20,100)(85,100)
\qbezier(85,100)(90,100)(90,95)
\qbezier(90,95)(90,95)(90,65)
\qbezier(90,55)(90,55)(90,25)
\qbezier(90,15)(90,10)(95,10)
\qbezier(95,10)(95,10)(125,10)
\qbezier(20,90)(20,90)(65,90)
\qbezier(65,90)(70,90)(70,85)
\qbezier(70,85)(70,85)(70,65)
\qbezier(20,10)(20,10)(65,10)
\qbezier(65,10)(70,10)(70,15)
\qbezier(70,25)(70,25)(70,55)
\qbezier(130,100)(130,100)(115,100)
\qbezier(115,100)(110,100)(110,95)
\qbezier(110,95)(110,95)(110,85)
\qbezier(110,85)(110,80)(105,80)
\qbezier(105,80)(105,80)(95,80)
\qbezier(85,80)(85,80)(75,80)
\qbezier(65,80)(65,80)(45,80)
\qbezier(45,80)(40,80)(40,75)
\qbezier(40,75)(40,75)(40,60)
\qbezier(40,60)(40,60)(60,40)
\qbezier(60,40)(60,40)(65,40)
\qbezier(75,40)(75,40)(85,40)
\qbezier(95,40)(100,45)(100,45)
\qbezier(100,45)(100,45)(100,55)
\qbezier(100,55)(100,60)(95,60)
\qbezier(95,60)(95,60)(60,60)
\qbezier(60,60)(60,60)(54,54)
\qbezier(46,46)(46,46)(40,40)
\qbezier(40,40)(40,40)(40,25)
\qbezier(40,25)(40,20)(45,20)
\qbezier(45,20)(45,20)(115,20)
\qbezier(115,20)(120,20)(120,25)
\qbezier(120,25)(120,25)(120,85)
\qbezier(120,85)(120,90)(125,90)
\qbezier(125,90)(130,90)(130,85)
\qbezier(125,10)(130,10)(130,15)
\qbezier(130,15)(130,15)(130,85)
\qbezier(130,0)(140,0)(140,10)
\qbezier(140,10)(140,10)(140,90)
\qbezier(130,100)(140,100)(140,90)
\end{picture}}
\end{picture}
\caption{Link $L_{m_{0}}$.} \label{fig13} 
\end{figure}

Since linking number of $L_{m_{0}}$ is $-1$ and one of its component is trivial, we have $c_0(L_{m_{0}}) = x(x-1) \cdot c_0(L'_m)$ where by $L'_m$ we denoted the non-trivial component of $L_{m_{0}}$. This component diagram is  presented in Fig.~\ref{fig14}.
\begin{figure}
\unitlength=.28mm
\centering 
\begin{picture}(0,110)(0,0)
\put(-190,0){\begin{picture}(150,0)
\thicklines
\qbezier(25,0)(25,0)(130,0)
\qbezier(25,0)(20,0)(20,5)
\qbezier(20,5)(20,5)(20,95)
\qbezier(20,95)(20,100)(25,100)
\qbezier(25,100)(25,100)(85,100)
\qbezier(85,100)(90,100)(90,95)
\qbezier(90,95)(90,95)(90,25)
\qbezier(90,15)(90,10)(95,10)
\qbezier(95,10)(95,10)(130,10)
\put(75,80){\vector(-1,0){5}}
\put(90,90){\vector(0,1){5}}
\qbezier(130,100)(130,100)(115,100)
\qbezier(115,100)(110,100)(110,95)
\qbezier(110,95)(110,95)(110,85)
\qbezier(110,85)(110,80)(105,80)
\qbezier(105,80)(105,80)(95,80)
\qbezier(85,80)(85,80)(45,80)
\qbezier(45,80)(40,80)(40,75)
\qbezier(40,75)(40,75)(40,60)
\qbezier(40,60)(40,60)(40,40)
\qbezier(40,40)(40,40)(40,25)
\qbezier(40,25)(40,20)(45,20)
\qbezier(45,20)(45,20)(115,20)
\qbezier(115,20)(120,20)(120,25)
\qbezier(120,25)(120,25)(120,85)
\qbezier(120,85)(120,90)(125,90)
\qbezier(125,90)(125,90)(130,90)
\end{picture}}
\put(-90,0){\begin{picture}(0,100)
\thicklines
\qbezier(20,0)(20,0)(130,0)
\qbezier(20,100)(20,100)(85,100)
\qbezier(85,100)(90,100)(90,95)
\qbezier(90,95)(90,95)(90,65)
\qbezier(90,55)(90,55)(90,25)
\qbezier(90,15)(90,10)(95,10)
\qbezier(95,10)(95,10)(130,10)
\qbezier(20,60)(20,60)(20,85)
\qbezier(20,85)(20,90)(25,90)
\qbezier(25,90)(25,90)(65,90)
\qbezier(65,90)(70,90)(70,85)
\qbezier(70,85)(70,85)(70,65)
\qbezier(20,10)(20,10)(65,10)
\qbezier(65,10)(70,10)(70,15)
\qbezier(70,25)(70,25)(70,55)
\qbezier(130,100)(130,100)(115,100)
\qbezier(115,100)(110,100)(110,95)
\qbezier(110,95)(110,95)(110,85)
\qbezier(110,85)(110,80)(105,80)
\qbezier(105,80)(105,80)(95,80)
\qbezier(85,80)(85,80)(75,80)
\qbezier(65,80)(65,80)(45,80)
\qbezier(45,80)(40,80)(40,75)
\qbezier(40,75)(40,75)(40,60)
\qbezier(40,60)(40,60)(60,40)
\qbezier(60,40)(60,40)(65,40)
\qbezier(75,40)(75,40)(85,40)
\qbezier(95,40)(100,45)(100,45)
\qbezier(100,45)(100,45)(100,55)
\qbezier(100,55)(100,60)(95,60)
\qbezier(95,60)(95,60)(60,60)
\qbezier(60,60)(60,60)(54,54)
\qbezier(46,46)(46,46)(40,40)
\qbezier(40,40)(40,40)(40,25)
\qbezier(40,25)(40,20)(45,20)
\qbezier(45,20)(45,20)(115,20)
\qbezier(115,20)(120,20)(120,25)
\qbezier(120,25)(120,25)(120,85)
\qbezier(120,85)(120,90)(125,90)
\qbezier(125,90)(125,90)(130,90)
\put(150,100){\makebox(0,0)[cc]{$\cdots$}}
\put(150,90){\makebox(0,0)[cc]{$\cdots$}}
\put(150,10){\makebox(0,0)[cc]{$\cdots$}}
\put(150,0){\makebox(0,0)[cc]{$\cdots$}}
\end{picture}}
\put(60,0){\begin{picture}(0,100)
\thicklines
\qbezier(20,0)(20,0)(130,0)
\qbezier(20,100)(20,100)(85,100)
\qbezier(85,100)(90,100)(90,95)
\qbezier(90,95)(90,95)(90,65)
\qbezier(90,55)(90,55)(90,25)
\qbezier(90,15)(90,10)(95,10)
\qbezier(95,10)(95,10)(125,10)
\qbezier(20,90)(20,90)(65,90)
\qbezier(65,90)(70,90)(70,85)
\qbezier(70,85)(70,85)(70,65)
\qbezier(20,10)(20,10)(65,10)
\qbezier(65,10)(70,10)(70,15)
\qbezier(70,25)(70,25)(70,55)
\qbezier(130,100)(130,100)(115,100)
\qbezier(115,100)(110,100)(110,95)
\qbezier(110,95)(110,95)(110,85)
\qbezier(110,85)(110,80)(105,80)
\qbezier(105,80)(105,80)(95,80)
\qbezier(85,80)(85,80)(75,80)
\qbezier(65,80)(65,80)(45,80)
\qbezier(45,80)(40,80)(40,75)
\qbezier(40,75)(40,75)(40,60)
\qbezier(40,60)(40,60)(60,40)
\qbezier(60,40)(60,40)(65,40)
\qbezier(75,40)(75,40)(85,40)
\qbezier(95,40)(100,45)(100,45)
\qbezier(100,45)(100,45)(100,55)
\qbezier(100,55)(100,60)(95,60)
\qbezier(95,60)(95,60)(60,60)
\qbezier(60,60)(60,60)(54,54)
\qbezier(46,46)(46,46)(40,40)
\qbezier(40,40)(40,40)(40,25)
\qbezier(40,25)(40,20)(45,20)
\qbezier(45,20)(45,20)(115,20)
\qbezier(115,20)(120,20)(120,25)
\qbezier(120,25)(120,25)(120,85)
\qbezier(120,85)(120,90)(125,90)
\qbezier(125,90)(130,90)(130,85)
\qbezier(125,10)(130,10)(130,15)
\qbezier(130,15)(130,15)(130,85)
\qbezier(130,0)(140,0)(140,10)
\qbezier(140,10)(140,10)(140,90)
\qbezier(130,100)(140,100)(140,90)
\end{picture}}
\end{picture}
\caption{Knot $L'_{m}$.} \label{fig14} 
\end{figure}
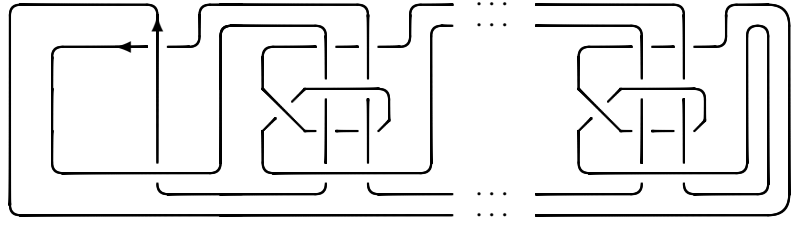

Two vectors in Fig.~\ref{fig14} indicate one of crossings of the diagram of $L'_{m}$. Let us consider the skien triple corresponding to this crossings. Diagram $L'_{m}$ is an element of the skein triple, corresponding to the positive crossings. Denote two other elements of the skein triple by $L'_{m_{-}}$ and $L'_{m_{0}}$. It easy to see than $L'_{m_{-}}$ is the unknot and $L'_{m_{0}}$ is the 2-component link presented in Fig.~\ref{fig15}. One its component is the unknot, and the another component is the same as the knot $L_{m-1}$, compare with the Fig.~\ref{fig12}. Its linking number is equal to $0$.
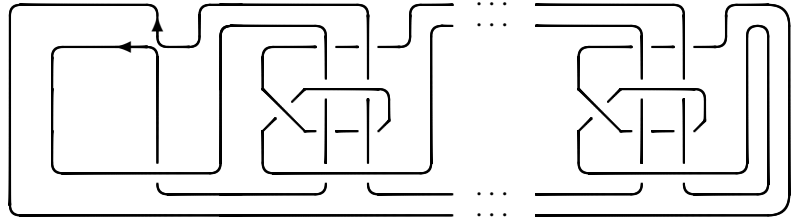
\begin{figure}
\unitlength=.28mm
\centering 
\begin{picture}(0,110)(0,0)
\put(-190,0){\begin{picture}(150,0)
\thicklines
\qbezier(25,0)(25,0)(130,0)
\qbezier(25,0)(20,0)(20,5)
\qbezier(20,5)(20,5)(20,95)
\qbezier(20,95)(20,100)(25,100)
\qbezier(25,100)(25,100)(85,100)
\qbezier(85,100)(90,100)(90,95)
\qbezier(90,90)(90,90)(90,85)
\qbezier(90,85)(90,80)(95,80)
\qbezier(85,80)(90,80)(90,75)
\qbezier(90,75)(90,75)(90,25)
\qbezier(90,15)(90,10)(95,10)
\qbezier(95,10)(95,10)(130,10)
\put(75,80){\vector(-1,0){5}}
\put(90,90){\vector(0,1){5}}
\qbezier(130,100)(130,100)(115,100)
\qbezier(115,100)(110,100)(110,95)
\qbezier(110,95)(110,95)(110,85)
\qbezier(110,85)(110,80)(105,80)
\qbezier(105,80)(105,80)(95,80)
\qbezier(85,80)(85,80)(45,80)
\qbezier(45,80)(40,80)(40,75)
\qbezier(40,75)(40,75)(40,60)
\qbezier(40,60)(40,60)(40,40)
\qbezier(40,40)(40,40)(40,25)
\qbezier(40,25)(40,20)(45,20)
\qbezier(45,20)(45,20)(115,20)
\qbezier(115,20)(120,20)(120,25)
\qbezier(120,25)(120,25)(120,85)
\qbezier(120,85)(120,90)(125,90)
\qbezier(125,90)(125,90)(130,90)
\end{picture}}
\put(-90,0){\begin{picture}(0,100)
\thicklines
\qbezier(20,0)(20,0)(130,0)
\qbezier(20,100)(20,100)(85,100)
\qbezier(85,100)(90,100)(90,95)
\qbezier(90,95)(90,95)(90,65)
\qbezier(90,55)(90,55)(90,25)
\qbezier(90,15)(90,10)(95,10)
\qbezier(95,10)(95,10)(130,10)
\qbezier(20,60)(20,60)(20,85)
\qbezier(20,85)(20,90)(25,90)
\qbezier(25,90)(25,90)(65,90)
\qbezier(65,90)(70,90)(70,85)
\qbezier(70,85)(70,85)(70,65)
\qbezier(20,10)(20,10)(65,10)
\qbezier(65,10)(70,10)(70,15)
\qbezier(70,25)(70,25)(70,55)
\qbezier(130,100)(130,100)(115,100)
\qbezier(115,100)(110,100)(110,95)
\qbezier(110,95)(110,95)(110,85)
\qbezier(110,85)(110,80)(105,80)
\qbezier(105,80)(105,80)(95,80)
\qbezier(85,80)(85,80)(75,80)
\qbezier(65,80)(65,80)(45,80)
\qbezier(45,80)(40,80)(40,75)
\qbezier(40,75)(40,75)(40,60)
\qbezier(40,60)(40,60)(60,40)
\qbezier(60,40)(60,40)(65,40)
\qbezier(75,40)(75,40)(85,40)
\qbezier(95,40)(100,45)(100,45)
\qbezier(100,45)(100,45)(100,55)
\qbezier(100,55)(100,60)(95,60)
\qbezier(95,60)(95,60)(60,60)
\qbezier(60,60)(60,60)(54,54)
\qbezier(46,46)(46,46)(40,40)
\qbezier(40,40)(40,40)(40,25)
\qbezier(40,25)(40,20)(45,20)
\qbezier(45,20)(45,20)(115,20)
\qbezier(115,20)(120,20)(120,25)
\qbezier(120,25)(120,25)(120,85)
\qbezier(120,85)(120,90)(125,90)
\qbezier(125,90)(125,90)(130,90)
\put(150,100){\makebox(0,0)[cc]{$\cdots$}}
\put(150,90){\makebox(0,0)[cc]{$\cdots$}}
\put(150,10){\makebox(0,0)[cc]{$\cdots$}}
\put(150,0){\makebox(0,0)[cc]{$\cdots$}}
\end{picture}}
\put(60,0){\begin{picture}(0,100)
\thicklines
\qbezier(20,0)(20,0)(130,0)
\qbezier(20,100)(20,100)(85,100)
\qbezier(85,100)(90,100)(90,95)
\qbezier(90,95)(90,95)(90,65)
\qbezier(90,55)(90,55)(90,25)
\qbezier(90,15)(90,10)(95,10)
\qbezier(95,10)(95,10)(125,10)
\qbezier(20,90)(20,90)(65,90)
\qbezier(65,90)(70,90)(70,85)
\qbezier(70,85)(70,85)(70,65)
\qbezier(20,10)(20,10)(65,10)
\qbezier(65,10)(70,10)(70,15)
\qbezier(70,25)(70,25)(70,55)
\qbezier(130,100)(130,100)(115,100)
\qbezier(115,100)(110,100)(110,95)
\qbezier(110,95)(110,95)(110,85)
\qbezier(110,85)(110,80)(105,80)
\qbezier(105,80)(105,80)(95,80)
\qbezier(85,80)(85,80)(75,80)
\qbezier(65,80)(65,80)(45,80)
\qbezier(45,80)(40,80)(40,75)
\qbezier(40,75)(40,75)(40,60)
\qbezier(40,60)(40,60)(60,40)
\qbezier(60,40)(60,40)(65,40)
\qbezier(75,40)(75,40)(85,40)
\qbezier(95,40)(100,45)(100,45)
\qbezier(100,45)(100,45)(100,55)
\qbezier(100,55)(100,60)(95,60)
\qbezier(95,60)(95,60)(60,60)
\qbezier(60,60)(60,60)(54,54)
\qbezier(46,46)(46,46)(40,40)
\qbezier(40,40)(40,40)(40,25)
\qbezier(40,25)(40,20)(45,20)
\qbezier(45,20)(45,20)(115,20)
\qbezier(115,20)(120,20)(120,25)
\qbezier(120,25)(120,25)(120,85)
\qbezier(120,85)(120,90)(125,90)
\qbezier(125,90)(130,90)(130,85)
\qbezier(125,10)(130,10)(130,15)
\qbezier(130,15)(130,15)(130,85)
\qbezier(130,0)(140,0)(140,10)
\qbezier(140,10)(140,10)(140,90)
\qbezier(130,100)(140,100)(140,90)
\end{picture}}
\end{picture}
\caption{Link $L'_{m_{0}}$.} \label{fig15} 
\end{figure}

Since the linking number of $L'_{m_{0}}$ is equal to $0$, by rule ($3$) for $L'_m$ we have $c_0(L'_m)= \frac{1}{x} \left( c_0(L'_{m_{-}})+ c_0(L'_{m_{0}}) \right)$. Using that $L'_{m_{0}}$ has linking number $0$ so we get $c_0(L'_{m_{0}})= (x-1)c_0(L_{m-1})$.

Combining all the above steps involved to compute $c_0(K_m; x)$ we obtain 
$$
c_0(K_m;x)= \dfrac{1}{x} \left( 1+ (x-1) \, c_0(L_{m};x) \right),  
$$
where 
\begin{equation}
 \begin{aligned}
          c_0(L_m;x) &= x- x(x-1)\dfrac{1}{x} \left( 1+ (x-1)c_0(L_{m-1};x) \right) \\
                     &= x-(x-1)-(x-1)^2c_0(L_{m-1};x)\\
                     &= 1- (x-1)^2 c_0(L_{m-1};x),
 \end{aligned}
\end{equation}
From rule ($2$) for any skein triple we have  $x c_0(L_+;x) - c_0(L_-;x) = c_0(L_0;x)$ where, $L_0$ is a 2-component link, say  $L_0=L_0^1 \cup L_0^2$ having linking number $\lambda$. From rule ($3$) we get $c_0(L_0;x) = (x-1) x^{-\lambda} c_0(L_0^1;x) c_0(L_0^2;x)$ and putting $x=1$ in both equations gives $c_0(L_+;1) - c_0(L_-;1) = c_0(L_0;1)=0$, i.e., $ c_0(L_+;1) = c_0(L_-;1) $. Since the crossing chamge is an unknotting operation and $c_{0} (U)=1$, we obtain $c_0(L;1)= 1$ for every knot $L$. Therefore, $c_0(L_i;x)$ are non-zero polynomials for all $i=1, 2, \ldots, m$ and hence,
$$
  \operatorname{maxdeg} c_0(L_m;x) > \operatorname{maxdeg} c_0(L_{m-1};x),  
$$
 i.e. all the polynomials $c_0(K_m;x)$ have different degrees which concludes that knots $K_i$'s are distinct, so $\sigma_n$ forms an $n$-simplex in $\mathcal{G}_R$ that completes the proof of Theorem~\ref{thm1}.
\end{proof}

\begin{corollary}\label{corollary2-4}
There exists infinite collection of knots $K_0, K_1, \ldots$ such that $d_R(K_i,K_j)=1$ for $i\neq j$.
\end{corollary}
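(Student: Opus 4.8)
The plan is to read Corollary~\ref{corollary2-4} off directly from the proof of Theorem~\ref{thm1}. In that proof the simplex $\sigma_n=\{K_0,K_1,\ldots,K_n\}$ is built from the diagrams $K_m$ of Fig.~\ref{fig8}, each consisting of $m$ identical blocks; crucially, the diagram $K_m$ depends only on $m$ and not on the ambient dimension $n$. Thus the finite families $\sigma_n$ are nested, and one may simply pass to the infinite family $\{K_m\}_{m\ge 0}$ with $K_0=U$.

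First I would record the ``$\le 1$'' half. For any $i<j$, applying a region crossing change at the region $R_{i+1}$ of the diagram $K_j$ produces the diagram $K_i$ -- this is precisely the observation ``r.c.c.\ at $R_i$ in $K_m$ yields $K_{i-1}$'' already made in the proof of Theorem~\ref{thm1} -- so $d_R(K_i,K_j)\le 1$, including the cases $i=0$.

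Next I would upgrade this to equality. The proof of Theorem~\ref{thm1} shows that the coefficient polynomials obey $c_0(K_m;x)=\frac1x\bigl(1+(x-1)c_0(L_m;x)\bigr)$ with $c_0(L_m;x)=1-(x-1)^2c_0(L_{m-1};x)$, and that $\operatorname{maxdeg}c_0(L_m;x)>\operatorname{maxdeg}c_0(L_{m-1};x)$ for every $m\ge 1$; hence the polynomials $c_0(K_m;x)$ have pairwise distinct degrees and the $K_m$ are pairwise non-isotopic. Since these are exactly the members of the infinite family, $d_R(K_i,K_j)\ge 1$ for all $i\ne j$, and together with the previous paragraph this gives $d_R(K_i,K_j)=1$, proving the corollary.

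Although the statement only asks for the existence of one such family, the same argument produces one through any prescribed knot $K_0$: replace $K_m$ by the connected sum $K_0\#K_m$. A region crossing change on the $K_m$-summand still converts $K_0\#K_j$ into $K_0\#K_i$, and multiplicativity of the HOMFLY polynomial (hence of the coefficient polynomials $c_0$) under connected sum forces the degrees of $c_0(K_0\#K_m;x)$ to stay strictly increasing in $m$, so the connected sums remain pairwise distinct. The only step needing a little care is this last distinctness claim for a general $K_0$; everything else is a direct transcription of Theorem~\ref{thm1}.
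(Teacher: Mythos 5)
Your proposal is correct and is essentially the paper's own proof: the paper simply takes the infinite family $\{K_m\}_{m\geq 0}$ from the proof of Theorem~\ref{thm1}, whose diagrams are independent of $n$, with $d_R(K_i,K_j)=1$ following from the r.c.c.\ observation and the distinctness of the polynomials $c_0(K_m;x)$. Your additional connected-sum remark goes beyond what the corollary asks and is not part of the paper's argument, but it does not affect the correctness of the main proof.
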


\begin{proof}
Consider the infinite family of knots $\{K_0, K_1, \ldots,K_m, \ldots\}$, where $K_m$ for $m \geq 1$ are the knots as seen in the proof of Theorem \ref{thm1}. Family satisfies the condition as required. 
\end{proof}

\begin{remark}
{\rm  As pointed out earlier, pass move and $\overline{\sharp}$-move are specific examples of r.c.c.. Therefore, infinite family of knots constructed in Gordian complexes $\mathcal{G}_{P}$ and $\mathcal{G}_{\overline{\sharp}}$ also serve as example in $\mathcal{G}_{R}$. However the family of knots $\{K_m\}_{m\geq0}$ given in the proof of Theorem~\ref{thm1} does not work in the case of pass-move and $\overline{\sharp}$-move as r.c.c at regions $R_i$ in $K_m$ is different from pass-move and $\overline{\sharp}$-move.} 
\end{remark}

While discussing the Gordian complex of knots by region crossing change we note that $d_R(K,K')$, the distance between any two knots $K, K'$, is always less than or equal to two. It will be interesting to find any pair of knots such that their distance is exactly equal to two. 

\begin{problem}  
Find pair of knots $K, K'$ such that $d_R(K,K')=2$? 
\end{problem}

For a given $0$-simplex $\sigma= \{K_0\}$ in a Gordian complex, the existence of $n$-simplex $\sigma_n = \{K_0, K_1,...,K_n\}$ 
containing $K_0$ for all $n \geq 1$ is well studied. However for a $m$-simplex $\sigma_m$ for $m\geq1$, the question of finding simplexes of higher dimension containing $\sigma_m$ remains unexplored. 

\begin{problem} 
Find whether for a given $m$-simplex $\sigma_m$ for $m\geq1$ there exist $n$-simplex $\sigma_n$ such that $\sigma_m \subset  \sigma_n$ for all $n > m$? 
\end{problem}

For $m=1$ the existence is shown in the Gordian complex by crossing change in~\cite{a}.

\section{Gordian complex of virtual knots}

L.~Kauffman~\cite{d} introduced virtual knots, and initiated the virtual knot theory. A virtual knot diagram is a 4-regular planar graph with each vertex replaced by two types of crossings, classical and virtual crossings(see Fig.~\ref{fig16}). 
\begin{figure}[!ht]
\centering 
\unitlength=0.6mm
\begin{picture}(0,30)(0,5)
\thicklines
\qbezier(-70,10)(-70,10)(-50,30)
\qbezier(-70,30)(-70,30)(-62,22) 
\qbezier(-50,10)(-50,10)(-58,18)
\qbezier(10,10)(10,10)(-10,30)
\qbezier(10,30)(10,30)(2,22) 
\qbezier(-10,10)(-10,10)(-2,18)
\qbezier(70,10)(70,10)(50,30)
\qbezier(70,30)(70,30)(50,10) 
\put(60,20){\circle{4}}
\end{picture}
\caption{Classical and virtual crossings.} \label{fig16}
\end{figure}
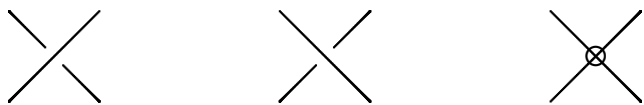
A virtual crossing is depicted by placing a small circle at the vertex. Two virtual knot diagrams correspond to same virtual knot if one can be obtained from the other via a sequence of generalized Reidemeister moves shown in Fig.~\ref{fig17}.
A virtual knot is defined as equivalence class of virtual knot diagrams modulo generalized Reidemeister moves.
\begin{figure}[th]
\centerline{\includegraphics[width=0.9\linewidth]{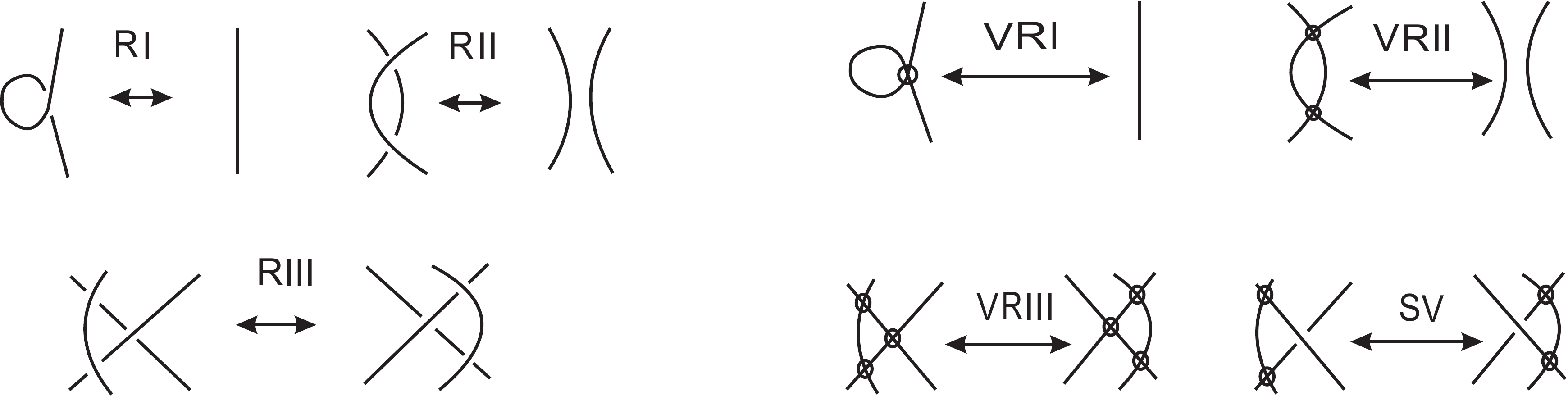}} 
%\centerline{\psfig{file=fig16.eps,width=4.8in}}
\caption{Generalized Reidemeister moves.\label{fig17}}
\end{figure}

The authors of \cite{b} extended the concept of Gordian complex to virtual knots using a local move called $v$-move which changes a classical crossing into virtual crossing. Later, S.~Horiuchi and Y.~Ohyama~\cite{c} also defined Gordian complex of virtual knots using local moves known as forbidden moves. In both Gordian complexes of virtual knots~\cite{b,c}, it was proved that every $0$-simplex $\{K_0\}$ is always contained in a $n$-simplex for each positive integer~$n$.

\emph{Arc shift move} \cite{g} is another local move in virtual knot diagram which results in reversing orientation between two consecutive crossings locally. An arc in a virtual knot diagram $D$ is defined as the segment in the knot diagram passing through exactly one pair of crossings, the crossings can be either classical or virtual. Arc shift move on an arc (a,b) is applied by first cutting the arc at two points near the crossings and then joining the free ends by switching the sides as shown in Fig.~\ref{fig18} with one classical and one virtual crossings. 

\begin{figure}[th]
\centerline{\includegraphics[width=1.0\linewidth]{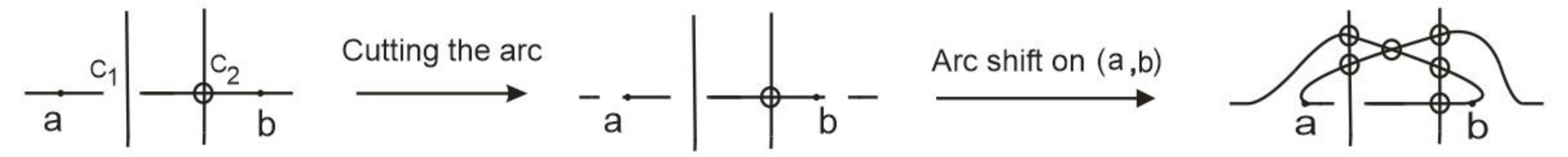}} 
%\centerline{\psfig{file=fig18.eps,width=5in}}
\caption{Arc shift on arc (a,b).\label{fig18}}
\end{figure}

Arc shift move together with generalized Reidemeister moves is an unknotting operation  for virtual knots~\cite{g} and hence, any virtual knot $K$ can be transformed into any other virtual knot $K'$ using arc shift moves. For any two virtual knots $K_1$, $K_2$, the distance $d_{A}(K_1,K_2)$ between $K_1$ and $K_2$ is defined as minimum number of arc shift moves needed to convert a diagram of $K_1$ into a diagram of $K_2$. We define Gordian complex of virtual knots by arc shift move $\mathcal{G_A}$ as follows.
\begin{defi}
The Gordian complex $\mathcal{G_A}$ of virtual knots by arc shift move is a simplicial complex defined by the following;
\begin{enumerate}
\item Each virtual knot $K$ is a vertex of $\mathcal{G_A}$.
\item Any set $\{k_i\}_{i=0}^{n}$ of $n+1$ vertices spans an n-simplex if and only if $d_{A}(K_i,K_j)=1$ for $0 \leq i, j \leq n$, $i\neq j$.
\end{enumerate}
\end{defi}

\begin{example}
Virtual knots shown in Fig.~\ref{fig19} span a 2-simplex in $\mathcal{G_A}$.
\begin{figure}[th]
\centerline{\includegraphics[width=0.6\linewidth]{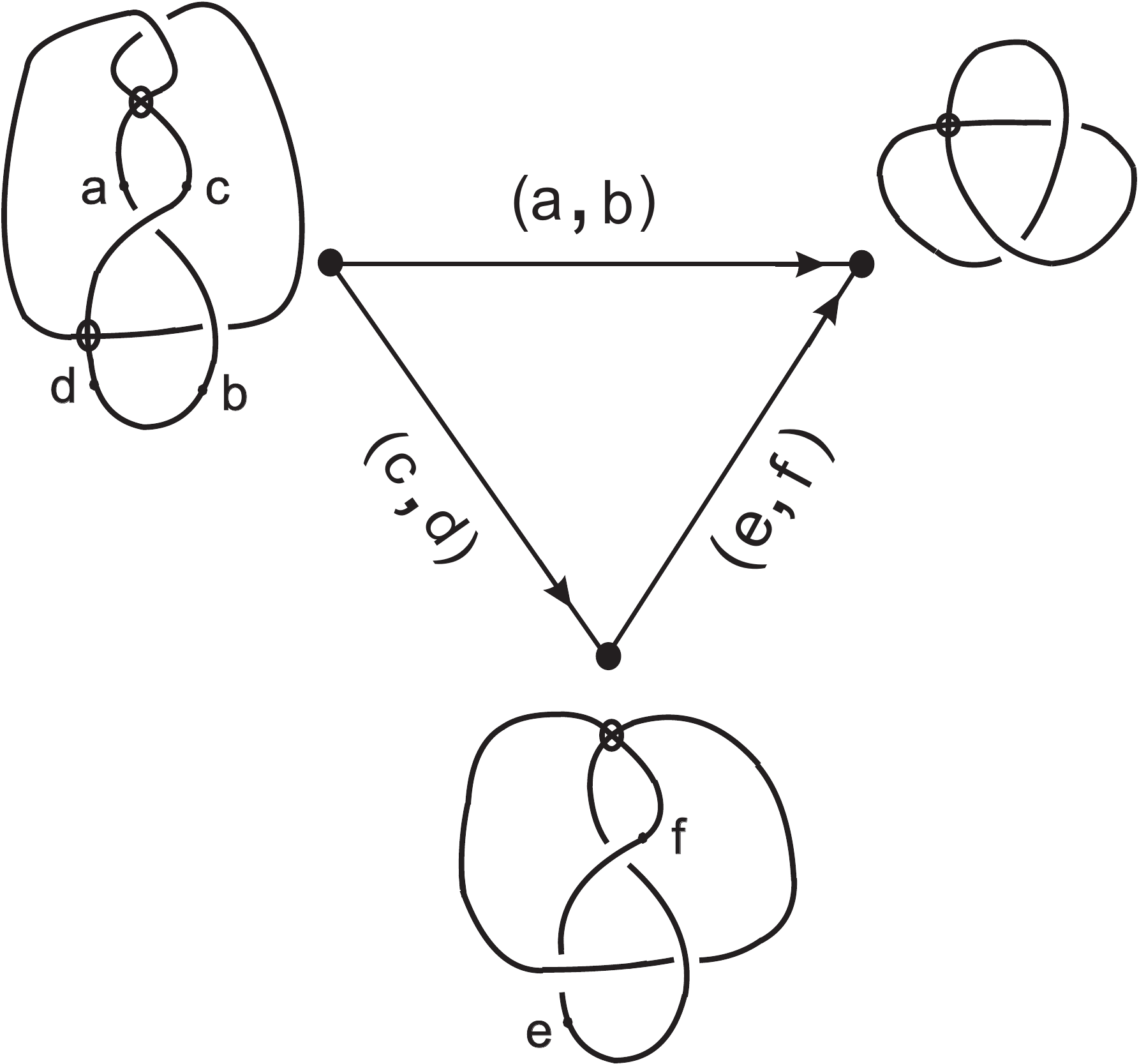}} 
%\centerline{\psfig{file=fig19.eps,width=2.8in}}
\caption{2-simplex in $\mathcal{G_A}$.}    \label{fig19}
\end{figure}

\end{example}
In further exploring the simplexes in $\mathcal{G_A}$ we prove following results which gives us enough examples of simplexes in $\mathcal{G_A}$.

\begin{theorem}\label{1}
There exists an infinite family of virtual knots $\{VK_n\}_{n\geq 1}$ such that $d_{A}(VK_t,VK_s)=1$ for distinct $t,s\geq 1$.
\end{theorem}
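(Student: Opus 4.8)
The plan is to mimic the construction used for the region crossing change Gordian complex in Theorem~\ref{thm1}, but adapted to the virtual setting. First I would exhibit a family of virtual knot diagrams $\{VK_n\}_{n\geq 1}$, each built out of $n$ identical ``blocks'' arranged in a chain, designed so that an arc shift move applied at the arc running through a distinguished crossing of the $i$-th block transforms $VK_n$ into $VK_{i-1}$ (after absorbing a virtual crossing by a detour move and cancelling a kink by a virtual Reidemeister~I move). Concretely, one block should be a twist region through which a single arc $(a,b)$ passes exactly one pair of crossings, so that the arc shift is legal; the local orientation reversal produced by the move together with a Reidemeister~II (or virtualized Reidemeister~II) unclasp should collapse one block and leave a diagram isotopic to the shorter chain. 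This gives $d_A(VK_t, VK_s)\le 1$ for all $t\ne s$ by the same ``varying the block index'' argument: from $VK_n$ one reaches each $VK_0,\dots,VK_{n-1}$ by a single arc shift, and since $n$ is arbitrary, every pair is at distance at most one; since distinct members will be shown to be inequivalent, the distance is exactly one.

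The substantive part is to prove that the $VK_n$ are pairwise distinct virtual knots. For this I would compute a virtual knot invariant along the family, ideally one with a skein-type or recursive behaviour so that the blocks contribute multiplicatively or additively. Natural candidates are the arrow polynomial, the generalized Alexander (sawed-off) polynomial, or the affine index polynomial; given that Section~5 of the paper is devoted to showing the affine index polynomials of these virtual knots coincide, the affine index polynomial will \emph{not} separate them, so I would instead use a finer invariant — for instance the arrow polynomial or an extension of Kawauchi-type coefficient polynomials to virtual links via the virtual HOMFLY or the surface bracket. The goal is to establish, by a recursion on the number of blocks analogous to equation~(1) in the proof of Theorem~\ref{thm1}, that the chosen invariant of $VK_n$ has a strictly increasing complexity parameter (degree, or number of nonzero terms, or span) as $n$ grows, forcing the $VK_n$ to be mutually non-isotopic.

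The steps in order: (1) define the block and the chain diagram $VK_n$ explicitly, marking the arc $(a,b)$ used for the arc shift in each block; (2) verify that the arc shift at the $i$-th block's arc, followed by a fixed sequence of generalized Reidemeister moves, yields $VK_{i-1}$, hence $d_A(VK_i, VK_j)\le 1$ for all $i\ne j$; (3) choose an invariant and set up a skein/recursion relation expressing its value on $VK_n$ in terms of its value on $VK_{n-1}$ and on auxiliary diagrams (unknot, split unions, simpler chains); (4) solve the recursion and read off a strictly monotone complexity statistic; (5) conclude that the $VK_n$ are distinct, so $\{VK_n\}$ spans arbitrarily large simplices and in particular $d_A(VK_t, VK_s)=1$ exactly.

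The main obstacle I anticipate is step~(3)–(4): arc shift is an orientation-sensitive move that is not a simple crossing change, so standard skein relations (Kauffman bracket, HOMFLY) are not directly compatible with it, and one must be careful that the recursion genuinely produces non-isotopic virtual knots rather than, say, a family that stabilizes. Choosing the right invariant — one sensitive to the chained structure but computable block-by-block — and proving the monotonicity of the associated complexity parameter is where the real work lies; the geometric part (steps~1–2) should be routine diagram-chasing once the block is drawn correctly.
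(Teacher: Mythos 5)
Your geometric outline (steps 1--2) coincides with the paper's: each $VK_n$ is a chain of identical blocks, each block a horizontal $2$-strand braid containing two classical and two virtual crossings, and a single arc shift on an arc passing through one classical and one virtual crossing of block $c_i$, followed by RII/VRII moves, makes that braid parallel and collapses everything to the right of it, yielding $VK_{i-1}$; this gives $d_A(VK_t,VK_s)\le 1$ for all $t\ne s$. The genuine gap is the other half of the theorem: you never actually prove the $VK_n$ are pairwise distinct, and the reason you give for steering away from the Kauffman bracket is a misconception. No compatibility between a skein relation and the arc shift move is needed, because the invariant is not being used to control the effect of the move --- it only has to be evaluated on each $VK_n$ separately and shown to take pairwise distinct values. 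This is exactly what the paper does with the normalized bracket ($f$-polynomial): writing $VK_n=N(T+S_n)$ and invoking the tangle-closure formula of Lemma~\ref{3}, one gets a transfer-matrix recursion expressing $\bigl(\langle D(S_n)\rangle,\langle N(S_n)\rangle,\langle X(S_n)\rangle\bigr)$ in terms of the same data for $S_{n-1}$ via a fixed matrix $\mathcal{B}\mathcal{A}$; tracking only maximal degrees and inducting on $n$ gives $\operatorname{maxdeg}\langle VK_n\rangle=10n+6$, and since $w(VK_n)=2n+2$ the top degree of $f_{VK_n}(A)$ is $4n$, which separates all the $VK_n$ at once.

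As written, your steps (3)--(4) name candidate invariants (arrow polynomial, virtual extensions of Kawauchi-type coefficient polynomials) but supply no recursion, no computation, and no monotone degree statement, so the distinctness claim --- and hence the equality $d_A(VK_t,VK_s)=1$ --- is unsupported. The repair is to commit to the bracket/$f$-polynomial and carry out the block-by-block transfer-matrix computation sketched above (this is also where the specific placement of the extra classical and virtual crossings in each block matters: the block must contain an arc meeting exactly one pair of crossings so the arc shift is legal, and the resulting matrix $\mathcal{B}$ must have entries whose maximal degrees make the induction close, which is a concrete calculation, not routine diagram-chasing).
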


As a corollary of Theorem ~\ref{1} we prove the following

\begin{corollary}\label{2}
Given any $0$-simplex $\{L\}$ in $\mathcal{G_A}$, there exists an $n$-simplex containing $L$ for each positive integer $n$.
\end{corollary}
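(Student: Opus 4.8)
The plan is to derive Corollary~\ref{2} from Theorem~\ref{1} by the connected sum trick, exactly as the proof of Theorem~\ref{thm1} reduces the statement for a general knot $K_0$ to the case $K_0=U$. Fix a diagram $D$ of the given virtual knot $L$ and an arc $\alpha$ of $D$ bounding, together with a short subarc of $D$, an embedded disc $\Delta$ whose interior is disjoint from $D$. The family $\{VK_n\}_{n\ge 1}$ built in Section~4 is obtained from a standard diagram by inserting $n$ identical blocks, and the arc shift move realizing $d_{A}(VK_t,VK_s)=1$ is carried out inside one of these blocks; in particular, for each $n$ there is an arc $\beta_n$ of $VK_n$ lying in the complement of every block and untouched by any of these moves. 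Let $VK_n^{L}$ be the virtual knot obtained by running the Section~4 construction on the base diagram $D$ rather than on the trivial one; concretely, $VK_n^{L}$ is the connected sum of $D$ and $VK_n$ formed along $\alpha$ and $\beta_n$. Connected sum of virtual knots is not canonical, but since we only need to exhibit one simplex, fixing these choices throughout is harmless, and the subdiagram coming from $D$ then sits inside a disc disjoint from every block of $VK_n^{L}$.

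First I would check that the pairwise arc shift distances are $1$. The move carrying $VK_t$ to $VK_s$ takes place inside a block, hence away from the grafted copy of $D$, so it also carries $VK_t^{L}$ to $VK_s^{L}$; likewise, for each $k$ there is, by the analogue of the region $R_1$ in Theorem~\ref{thm1}, an arc of $VK_k^{L}$ at which a single arc shift move returns the base diagram $D$, i.e.\ yields $L$. Hence $d_{A}(L,VK_k^{L})\le 1$ and $d_{A}(VK_t^{L},VK_s^{L})\le 1$, and once we know these virtual knots are pairwise distinct the inequalities become equalities. Then $\{L,VK_1^{L},\dots,VK_n^{L}\}$ is a set of $n+1$ virtual knots pairwise at arc shift distance $1$, i.e.\ an $n$-simplex of $\mathcal{G_A}$ with $L$ as a vertex, which is the assertion of Corollary~\ref{2}.

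The step I expect to be the crux is the pairwise distinctness of $L$ and the $VK_n^{L}$. This should be inherited from the pairwise distinctness of the $VK_n$ established in Theorem~\ref{1}: since $D$ occupies a disc meeting the rest of the diagram in a single strand, $VK_n^{L}$ is a genuine connected sum $L\#VK_n$, so if the invariant $\nu$ that separates the $VK_n$ in the proof of Theorem~\ref{1} is additive under connected sum --- as are, for instance, the supporting genus and the relevant polynomial degrees --- then $\nu(L\#VK_n)=\nu(L)+\nu(VK_n)$ takes infinitely many distinct values and the $VK_n^{L}$ are forced to be distinct, and distinct from $L=VK_0^{L}$. By Proposition~\ref{prop51} this invariant $\nu$ cannot be the affine index polynomial, so the argument must be run with whichever invariant does the separating in Theorem~\ref{1}. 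The remaining work --- locating the arc $\beta_n$ and the disc $\Delta$ away from all blocks, and reading off the arc of $VK_k^{L}$ that undoes the base --- is routine from the explicit diagrams of Section~4.
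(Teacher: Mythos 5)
Your proposal is correct and is essentially the paper's own argument: the paper proves Corollary~\ref{2} by taking the family $\{L\sharp VK_0, L\sharp VK_1,\ldots, L\sharp VK_n\}$ of connected sums with the knots of Theorem~\ref{1}, observing that $L\sharp VK_0=L$ since $VK_0$ is trivial. In fact you are more careful than the paper, which leaves implicit both points you flag; they do go through, since the arc shift moves are localized in the blocks away from the grafted copy of $L$, and the separating invariant of Theorem~\ref{1} is the top degree of the Kauffman $f$-polynomial, which is additive under the diagrammatic connected sum (the bracket is multiplicative and the writhe additive), so the $L\sharp VK_n$ are pairwise distinct.
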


\section{Proof of Theorem~\ref{1} and Corollary~\ref{2}}

\begin{proof}
M.~Hirasawa and Y.~Uchida \cite{a} gave a nice construction of a family of knots to show the existence of a $n$-simplex for each $n\in \mathbb{N}$ containing a given 1-simplex in $\mathcal{G}$. \emph{Horiuchi et. al.} \cite{b} modified this family by changing some of the classical crossings into virtual crossings to show the existence of an $n$-simplex for each $n\in \mathbb{N}$ in the Gordian complex of virtual knots by $v$-move. We add some extra classical and virtual crossings to the family obtained in \cite{b} and prove that the collection of virtual knots so obtained satisfies the required conditions to prove Theorem ~\ref{1}. 

Consider the family of virtual knots $\{VK_n\}_{n\geq 1}$, where each $VK_n$ is the virtual knot shown in Fig.~\ref{fig20}. Each $VK_n$ contains $n+1$ blocks separated by dashed lines and denoted as $c_0, c_1 \ldots, c_n$. Each block contains a  horizontal $2$-strand braid with  two virtual and two classical crossings alternately.  
\begin{figure}
\unitlength=.21mm
\centering 
\begin{picture}(0,90)(0,0)
\put(-300,0){\begin{picture}(0,0)
{\thinlines
\qbezier(90,-13)(90,-13)(90,-8)
\qbezier(90,-3)(90,-3)(90,2)
\qbezier(90,7)(90,7)(90,12)
\qbezier(90,17)(90,17)(90,22)
\qbezier(90,27)(90,27)(90,32)
\qbezier(90,37)(90,37)(90,42)
\qbezier(90,47)(90,47)(90,52)
\qbezier(90,57)(90,57)(90,62)
\qbezier(90,67)(90,67)(90,72)
\qbezier(90,77)(90,77)(90,82)
\qbezier(90,87)(90,87)(90,92)
}
{\thinlines
\qbezier(250,-13)(250,-13)(250,-8)
\qbezier(250,-3)(250,-3)(250,2)
\qbezier(250,7)(250,7)(250,12)
\qbezier(250,17)(250,17)(250,22)
\qbezier(250,27)(250,27)(250,32)
\qbezier(250,37)(250,37)(250,42)
\qbezier(250,47)(250,47)(250,52)
\qbezier(250,57)(250,57)(250,62)
\qbezier(250,67)(250,67)(250,72)
\qbezier(250,77)(250,77)(250,82)
\qbezier(250,87)(250,87)(250,92)
\put(45,90){\makebox(0,0)[cc]{$c_{0}$}}
\put(170,90){\makebox(0,0)[cc]{$c_{1}$}}
}
\thicklines
\qbezier(0,0)(0,0)(90,0)
\qbezier(0,0)(0,0)(0,30)
\qbezier(0,50)(0,50)(0,80)
\qbezier(0,80)(0,80)(90,80)
\qbezier(0,30)(0,30)(20,50)
\qbezier(0,50)(0,50)(6,44)
\qbezier(14,36)(20,30)(20,30)
\qbezier(20,30)(20,30)(40,50)
\qbezier(20,50)(20,50)(40,30)
\qbezier(40,30)(40,30)(60,50)
\qbezier(40,50)(40,50)(46,44)
\qbezier(54,36)(60,30)(60,30)
\qbezier(60,30)(60,30)(80,50)
\qbezier(60,50)(60,50)(80,30)
\qbezier(80,50)(80,50)(80,70)
\qbezier(80,70)(80,70)(90,70)
\qbezier(80,30)(80,30)(80,10)
\qbezier(80,10)(80,10)(90,10)
\put(30,40){\circle{6}}
\put(70,40){\circle{6}}
%%%
\qbezier(90,0)(90,0)(250,0)
\qbezier(90,10)(90,10)(120,10)
\qbezier(120,10)(120,10)(120,16)
\qbezier(120,24)(120,24)(120,26)
\qbezier(120,34)(120,34)(120,70)
\qbezier(120,70)(120,70)(90,70)
\qbezier(80,80)(80,80)(130,80)
\qbezier(130,80)(130,80)(130,34)
\qbezier(130,26)(130,26)(130,24)
\qbezier(130,16)(130,16)(130,10)
\qbezier(130,10)(130,10)(250,10)
\qbezier(100,20)(100,20)(100,60)
\qbezier(100,20)(100,20)(240,20)
\qbezier(240,20)(240,20)(240,70)
\qbezier(240,70)(240,70)(250,70)
\qbezier(100,20)(100,20)(100,60)
\qbezier(100,60)(100,60)(220,60)
\qbezier(110,30)(110,30)(110,50)
\qbezier(110,30)(110,30)(140,30)
\qbezier(110,50)(110,50)(140,50)
\put(120,50){\circle{6}}
\put(120,60){\circle{6}}
\put(130,50){\circle{6}}
\put(130,60){\circle{6}}
\qbezier(140,30)(140,30)(160,50)
\qbezier(140,50)(140,50)(160,30)
\qbezier(160,30)(160,30)(180,50)
\qbezier(160,50)(160,50)(166,44)
\qbezier(174,36)(180,30)(180,30)
\qbezier(180,30)(180,30)(200,50)
\qbezier(180,50)(180,50)(200,30)
\qbezier(200,30)(200,30)(220,50)
\qbezier(200,50)(200,50)(206,44)
\qbezier(214,36)(220,30)(220,30)
\put(150,40){\circle{6}}
\put(190,40){\circle{6}}
\qbezier(220,50)(220,50)(220,60)
\qbezier(220,30)(220,30)(230,30)
\qbezier(230,30)(230,30)(230,80)
\qbezier(230,80)(230,80)(250,80)
\end{picture}}
\put(-140,0){\begin{picture}(0,100)
\thicklines
{\thinlines
\qbezier(250,-13)(250,-13)(250,-8)
\qbezier(250,-3)(250,-3)(250,2)
\qbezier(250,7)(250,7)(250,12)
\qbezier(250,17)(250,17)(250,22)
\qbezier(250,27)(250,27)(250,32)
\qbezier(250,37)(250,37)(250,42)
\qbezier(250,47)(250,47)(250,52)
\qbezier(250,57)(250,57)(250,62)
\qbezier(250,67)(250,67)(250,72)
\qbezier(250,77)(250,77)(250,82)
\qbezier(250,87)(250,87)(250,92)
\put(170,90){\makebox(0,0)[cc]{$c_{2}$}}
}
\qbezier(90,0)(90,0)(250,0)
\qbezier(90,10)(90,10)(120,10)
\qbezier(120,10)(120,10)(120,16)
\qbezier(120,24)(120,24)(120,26)
\qbezier(120,34)(120,34)(120,70)
\qbezier(120,70)(120,70)(90,70)
\qbezier(80,80)(80,80)(130,80)
\qbezier(130,80)(130,80)(130,34)
\qbezier(130,26)(130,26)(130,24)
\qbezier(130,16)(130,16)(130,10)
\qbezier(130,10)(130,10)(250,10)
\qbezier(100,20)(100,20)(100,60)
\qbezier(100,20)(100,20)(240,20)
\qbezier(240,20)(240,20)(240,70)
\qbezier(240,70)(240,70)(250,70)
\qbezier(100,20)(100,20)(100,60)
\qbezier(100,60)(100,60)(220,60)
\qbezier(110,30)(110,30)(110,50)
\qbezier(110,30)(110,30)(140,30)
\qbezier(110,50)(110,50)(140,50)
\put(120,50){\circle{6}}
\put(120,60){\circle{6}}
\put(130,50){\circle{6}}
\put(130,60){\circle{6}}
\qbezier(140,30)(140,30)(160,50)
\qbezier(140,50)(140,50)(160,30)
\qbezier(160,30)(160,30)(180,50)
\qbezier(160,50)(160,50)(166,44)
\qbezier(174,36)(180,30)(180,30)
\qbezier(180,30)(180,30)(200,50)
\qbezier(180,50)(180,50)(200,30)
\qbezier(200,30)(200,30)(220,50)
\qbezier(200,50)(200,50)(206,44)
\qbezier(214,36)(220,30)(220,30)
\put(150,40){\circle{6}}
\put(190,40){\circle{6}}
\qbezier(220,50)(220,50)(220,60)
\qbezier(220,30)(220,30)(230,30)
\qbezier(230,30)(230,30)(230,80)
\qbezier(230,80)(230,80)(250,80)
\put(270,80){\makebox(0,0)[cc]{$\cdots$}}
\put(270,70){\makebox(0,0)[cc]{$\cdots$}}
\put(270,10){\makebox(0,0)[cc]{$\cdots$}}
\put(270,0){\makebox(0,0)[cc]{$\cdots$}}
\end{picture}}
\put(50,0){\begin{picture}(0,100)
{\thinlines
\qbezier(90,-13)(90,-13)(90,-8)
\qbezier(90,-3)(90,-3)(90,2)
\qbezier(90,7)(90,7)(90,12)
\qbezier(90,17)(90,17)(90,22)
\qbezier(90,27)(90,27)(90,32)
\qbezier(90,37)(90,37)(90,42)
\qbezier(90,47)(90,47)(90,52)
\qbezier(90,57)(90,57)(90,62)
\qbezier(90,67)(90,67)(90,72)
\qbezier(90,77)(90,77)(90,82)
\qbezier(90,87)(90,87)(90,92)
\put(170,90){\makebox(0,0)[cc]{$c_{n}$}}
}
\thicklines
\qbezier(90,0)(90,0)(250,0)
\qbezier(90,10)(90,10)(120,10)
\qbezier(120,10)(120,10)(120,16)
\qbezier(120,24)(120,24)(120,26)
\qbezier(120,34)(120,34)(120,70)
\qbezier(120,70)(120,70)(90,70)
\qbezier(90,80)(90,80)(130,80)
\qbezier(130,80)(130,80)(130,34)
\qbezier(130,26)(130,26)(130,24)
\qbezier(130,16)(130,16)(130,10)
\qbezier(130,10)(130,10)(250,10)
\qbezier(100,20)(100,20)(100,60)
\qbezier(100,20)(100,20)(240,20)
\qbezier(240,20)(240,20)(240,70)
\qbezier(240,70)(240,70)(250,70)
\qbezier(100,20)(100,20)(100,60)
\qbezier(100,60)(100,60)(220,60)
\qbezier(110,30)(110,30)(110,50)
\qbezier(110,30)(110,30)(140,30)
\qbezier(110,50)(110,50)(140,50)
\put(120,50){\circle{6}}
\put(120,60){\circle{6}}
\put(130,50){\circle{6}}
\put(130,60){\circle{6}}
\qbezier(140,30)(140,30)(160,50)
\qbezier(140,50)(140,50)(160,30)
\qbezier(160,30)(160,30)(180,50)
\qbezier(160,50)(160,50)(166,44)
\qbezier(174,36)(180,30)(180,30)
\qbezier(180,30)(180,30)(200,50)
\qbezier(180,50)(180,50)(200,30)
\qbezier(200,30)(200,30)(220,50)
\qbezier(200,50)(200,50)(206,44)
\qbezier(214,36)(220,30)(220,30)
\put(150,40){\circle{6}}
\put(190,40){\circle{6}}
\qbezier(220,50)(220,50)(220,60)
\qbezier(220,30)(220,30)(230,30)
\qbezier(230,30)(230,30)(230,80)
\qbezier(230,80)(230,80)(250,80)
\qbezier(250,10)(250,10)(250,70)
\qbezier(250,0)(250,0)(260,0)
\qbezier(250,80)(250,80)(260,80)
\qbezier(260,0)(260,0)(260,80)
\end{picture}}
\end{picture}
\caption{Virtual knot $VK_{n}$.} \label{fig20} 
\end{figure}
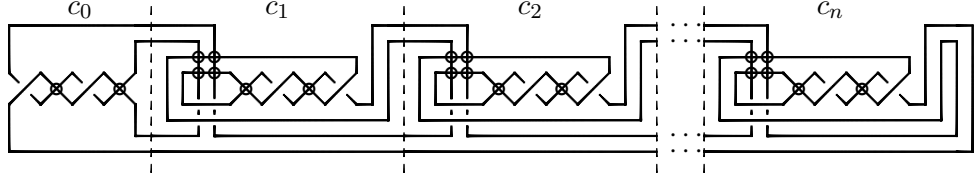
%%% 

One such block of horizontal $2$-strands braid containing two virtual and two classical crossings in alternation is shown in Fig.~\ref{fig21}. On one of the strand in $2$-braid choose an arc $(a,b)$ passing through a classical and virtual crossing and apply an arc shift move on the arc $(a,b)$, as shown in the Fig.~\ref{fig21}. Resulting virtual knot diagram obtained by arc shift move can be simplified further using VRII and RII moves such that this local $2$-strand forming the braid becomes parallel with no crossings. This whole process of making $2$-strand in any block $c_i$ parallel and free from crossings need a single arc shift move and some generalized Reidemeister moves.
\begin{figure}[th]
\centerline{\includegraphics[width=0.8\linewidth]{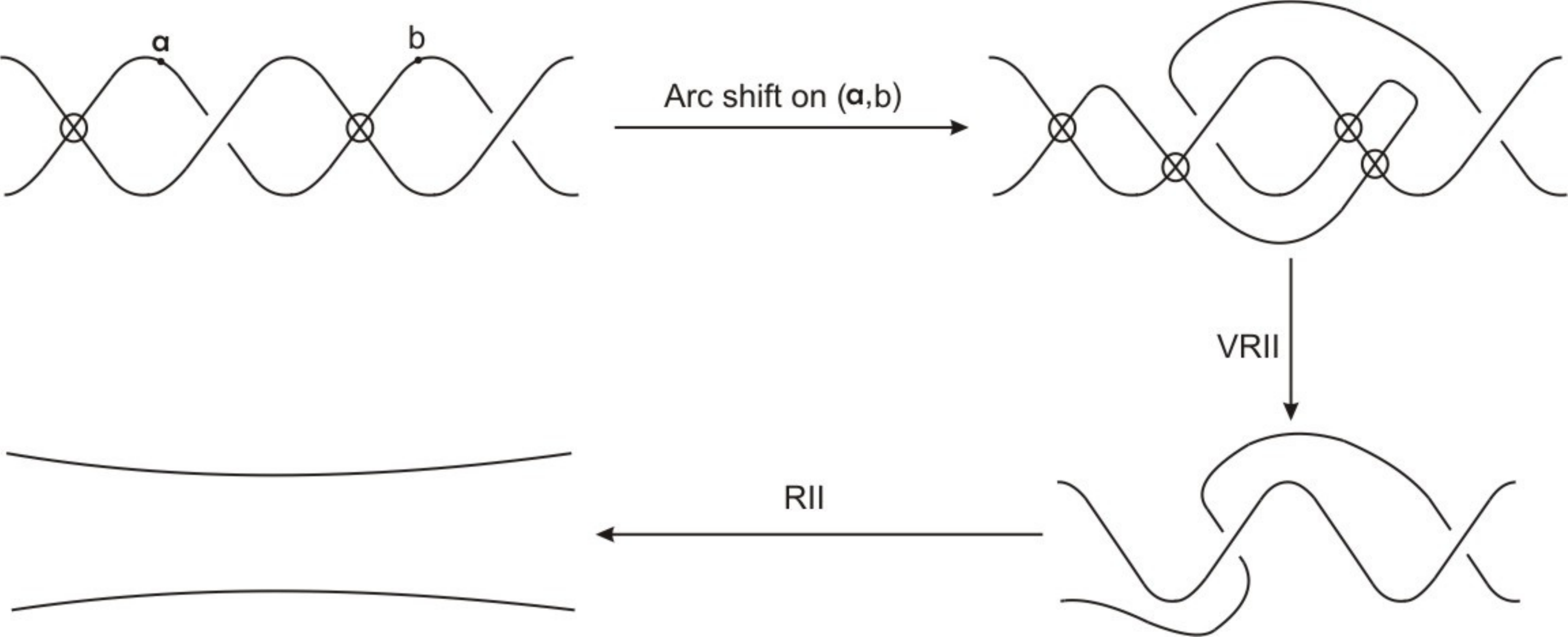}} 
%\centerline{\psfig{file=fig21.eps,width=4in}}
\caption{Turning $2$-strand braid free of crossings.} \label{fig21}
\end{figure}

It is easy to observe from the diagram of $VK_n$(see Fig.~\ref{fig20}) that, if $2$-strand braid in the block $c_j$ becomes parallel, then crossings in the $VK_n$ to the right side of block $c_j$ will vanish by applying the moves RII and VRII for multiple times. Thus by applying a single arc shift move in the blocks $c_1, c_2, \ldots, c_n$ in $VK_n$, we can obtain the knots $VK_0, VK_1, VK_2, \ldots, VK_{n-1}$ respectively, where $VK_0$ is assumed as trivial knot. Thus $d_{A}(VK_t,VK_s) \leq 1$ for distinct $t,s \geq 0$. In order to show the equality, it is enough to ensure that all the $VK_n's$ are distinct virtual knots. We use the Kauffman $f$-polynomial to show that virtual knots $VK_n$ are distinct and follow the method of tangles to compute maximal degree of $f$-polynomial as given in \cite{b}.

For a reader convenience we recall the definition of Kauffman's $f$-polynomial. 
A state of a virtual knot diagram $D$ is a collection of loops obtained by splitting each classical crossing as either a $A$-split or a $B$-split as shown in Fig.~\ref{fig22}.
\begin{figure}[!ht]
\centering 
\unitlength=0.6mm
\begin{picture}(0,20)(0,10)
\thicklines
\qbezier(-70,10)(-60,20)(-70,30)
\qbezier(-50,10)(-60,20)(-50,30) 
\put(-20,20){\vector(-1,0){20}}
\qbezier(-10,10)(-10,10)(10,30)
\qbezier(-10,30)(-10,30)(-2,22) 
\qbezier(10,10)(10,10)(2,18)
\put(20,20){\vector(1,0){20}}
\put(-30,25){\makebox(0,0)[cc]{$A$-split}}
\put(30,25){\makebox(0,0)[cc]{$B$-split}}
\qbezier(50,30)(60,20)(70,30)
\qbezier(50,10)(60,20)(70,10) 
\end{picture}
\caption{$A$- and $B$-splits.} \label{fig22}
\end{figure}
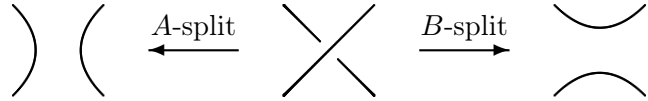

 Loops in a state might have intersections in virtual crossings. L. Kauffman \cite{d} defined bracket polynomial $\langle D \rangle$ for a diagram $D$ of virtual knot $K$ as follows:
 \[\langle D \rangle = \sum \limits_ { S } A ^ { a ( S ) - b ( S ) } \left( - A ^ { 2 } - A ^ { - 2 } \right) ^ { ||S|| - 1 },\]
 where sum runs over all the states $S$ of $D$, $a(S)$ and $b(S)$ denotes number of $A$-splits and  $B$-splits respectively in the state $S$. $||S||$ denotes number of closed loops in the state $S$.
%\begin{figure}[th]
%\centerline{\psfig{file=ABSPLITS.eps,width=2.5in}}
%\vspace*{8pt}
%\caption{$A$ and $B$-splits.\label{SPLIT}}
%\end{figure}

Bracket polynomial $\langle D \rangle$ is invariant under all generalized Reidemeister moves except the RI move. $\langle D \rangle$ is modified into \emph{f-polynomial} $f_{D}(A)$ by normalizing it using writhe $w(D)$ of the diagram. $f_{D}(A)$ is given as follows:
\[f _ { D } ( A ) = \left( - A ^ { 3 } \right) ^ { - w ( D ) } \langle D \rangle.\]
Here, writhe $w(D)$ is the sum of signs of all the classical crossings of $D$. This \emph{f-polynomial} is an invariant of virtual knots and polynomial obtained by change of variable $A\rightarrow t^{-1/4}$ in \emph{f} is also known as Jones polynomial $V_{D}(t)$ of $D$.\\
A (2,2)-\emph{tangle} in a virtual knot diagram $D$ is local part of the knot lying inside a circle that intersects $D$ in exactly four points. By joining four boundary points of a (2,2)-tangle $T$ in all possible ways we get virtual links which are denoted by $D(T)$, $N(T)$ and $X(T)$ as shown in the Fig.~\ref{fig23}. We call them respectively $D$, $N$ and $X$-closures of tangle $T$.
\begin{figure}[!ht]
\centering 
\unitlength=0.6mm
\begin{picture}(0,40)(0,0)
\thicklines
\put(-60,20){\circle{20}}
\put(60,20){\circle{20}}
\qbezier(-70,15)(-75,15)(-75,20)
\qbezier(-70,25)(-75,25)(-75,20)
\qbezier(-50,15)(-45,15)(-45,20) 
\qbezier(-50,25)(-45,25)(-45,20)
\put(-60,20){\makebox(0,0)[cc]{$T$}}
\put(0,20){\circle{20}}
\qbezier(-5,11)(-5,5)(0,5)
\qbezier(5,11 )(5,5)(0,5) 
\qbezier(-5,29)(-5,35)(0,35)
\qbezier(5,29 )(5,35)(0,35)
\put(0,20){\makebox(0,0)[cc]{$T$}}
\put(-60,0){\makebox(0,0)[cc]{$D(T)$}}
\put(0,0){\makebox(0,0)[cc]{$N(T)$}}
\put(60,0){\makebox(0,0)[cc]{$X(T)$}}
\qbezier(55,29)(55,35)(70,35)
\qbezier(70,35)(85,35)(85,30)
\qbezier(85,30)(85,30)(85,25)
\qbezier(55,11)(55,5)(70,5) 
\qbezier(70,5)(85,5)(85,10)
\qbezier(85,10)(85,10)(85,15) 
\qbezier(65,29)(70,35)(75,25)
\qbezier(65,11)(70,5)(75,15) 
\put(60,20){\makebox(0,0)[cc]{$T$}}
\qbezier(75,25)(75,25)(85,15)
\qbezier(75,15)(75,15)(85,25)
\put(80,20){\circle{3}}
\end{picture}
\caption{Closures of tangle $T$.} \label{fig23}
\end{figure}
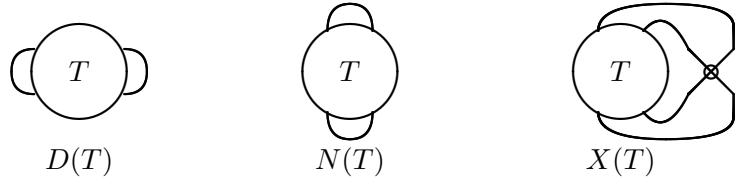

Sum of two (2,2)-tangles $T$ and $S$ is defined as the tangle shown in Fig.~\ref{fig24}.
%\begin{figure}[th]
%\centerline{\psfig{file=TANGLESUM.eps,width=1in}}
%\vspace*{8pt}
%\caption{Sum $T + S$ of tangles $T,S$.\label{TANGLESUM}}
%\end{figure}
\begin{figure}[!ht]
\centering 
\unitlength=0.6mm
\begin{picture}(0,20)(0,10)
\thicklines
\put(-20,20){\circle{20}}
\put(20,20){\circle{20}}
\put(-20,20){\makebox(0,0)[cc]{$T$}}
\put(20,20){\makebox(0,0)[cc]{$S$}}
\qbezier(-30,15)(-30,15)(-35,15)
\qbezier(-30,25)(-30,25)(-35,25)
\qbezier(-10,15)(-10,15)(10,15) 
\qbezier(-10,25)(-10,25)(10,25) 
\qbezier(30,25)(30,25)(35,25)
\qbezier(30,15)(30,15)(35,15)
\end{picture}
\caption{Sum $T+S$ of tangles $T$ and $S$.} \label{fig24}
\end{figure}
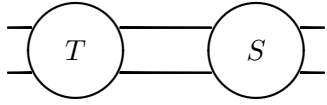
In~\cite{b} there is presented an alternative way to calculate bracket polynomial $\langle D \rangle$, where $D$ can be written as $D= N(T+S)$ for some tangles $T$ and $S$.

\begin{lemma} \cite{b} \label{3} The following relation holds: 
$$
\langle N ( T + S ) \rangle = ( \langle D ( T ) \rangle , \langle N ( T ) \rangle , \langle X ( T ) \rangle )  \cdot \mathcal { A } \cdot  ( \langle D ( S ) \rangle,  \langle N ( S ) \rangle, \langle X ( S ) \rangle)^{T}, 
$$ 
where $T$ denotes the transpose of vector and $\mathcal A$ is $3\times3$-matrix: 
$$\mathcal { A } = \left( \begin{array} { l l l } { \alpha } & { \beta } & { \beta } \\ { \beta } & { \alpha } & { \beta } \\ { \beta } & { \beta } & { \alpha } \end{array} \right),
$$
where 
$$
\alpha = \frac { \mu + 1 } { ( \mu - 1 ) ( \mu + 2 ) } , \quad \beta = \frac { - 1 } { ( \mu - 1 ) ( \mu + 2 ) } \quad \text { and } 
\quad \mu = - A ^ { 2 } - A ^ { - 2 }.
$$
\end{lemma}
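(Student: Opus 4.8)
The plan is to identify both sides of the identity with bilinear forms on the Kauffman bracket skein module of a four–endpoint tangle, and then reduce the whole statement to inverting one $3\times3$ matrix over $\mathbb{Q}(A)$. First I would record the structural fact that this skein module is free of rank three. Resolving every classical crossing inside a $(2,2)$–tangle $T$ by $\langle\,\cdot\,\rangle=A\langle A\text{-split}\rangle+A^{-1}\langle B\text{-split}\rangle$ leaves a $\mathbb{Z}[A^{\pm1}]$–combination of classically crossingless tangle diagrams, each of which carries only virtual crossings together with some disjoint loops; using the detour move one pushes all virtual crossings off the tangle disk and sees that every such diagram reduces, modulo the loop value $\mu=-A^{2}-A^{-2}$, to one of the three elementary tangles $e_{0}$ (two parallel horizontal strands), $e_{\infty}$ (the cup--cap tangle), $e_{v}$ (a single virtual crossing), according to which of the three pairings of the four endpoints it realizes. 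Thus $\langle T\rangle=t_{0}e_{0}+t_{\infty}e_{\infty}+t_{v}e_{v}$ and $\langle S\rangle=s_{0}e_{0}+s_{\infty}e_{\infty}+s_{v}e_{v}$ for suitable $t_{i},s_{j}$.

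Next I would exploit bilinearity of the bracket. The classical crossings of $N(T+S)$ split as those of $T$ together with those of $S$, so the state sum factorizes: summing over all states whose $T$–part reduces to $e_{i}$ and whose $S$–part reduces to $e_{j}$ yields, after accounting for all loops created in the closure, the term $t_{i}s_{j}\langle N(e_{i}+e_{j})\rangle$. Hence $\langle N(T+S)\rangle=\sum_{i,j}t_{i}s_{j}M_{ij}$ with $M_{ij}=\langle N(e_{i}+e_{j})\rangle$, and in the same way $\bigl(\langle D(T)\rangle,\langle N(T)\rangle,\langle X(T)\rangle\bigr)^{T}=P\,(t_{0},t_{\infty},t_{v})^{T}$, where the $i$–th column of $P$ is $\bigl(\langle D(e_{i})\rangle,\langle N(e_{i})\rangle,\langle X(e_{i})\rangle\bigr)^{T}$. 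I would then compute $M$ and $P$ by pure loop–counting: each diagram $e_{i}+e_{j}$ and each closure $D(e_{i}),N(e_{i}),X(e_{i})$ is a disjoint union of circles (some carrying an irrelevant virtual self–crossing), and with $\langle k\text{ circles}\rangle=\mu^{k-1}$ this gives
\[
M=\begin{pmatrix}\mu&1&1\\ 1&\mu&1\\ 1&1&\mu\end{pmatrix},\qquad
P=\begin{pmatrix}1&\mu&1\\ \mu&1&1\\ 1&1&\mu\end{pmatrix};
\]
equivalently $M=Q$ and $P=Q\Pi$, where $Q=(\mu-1)I+J$ with $J$ the all–ones matrix and $\Pi$ the permutation matrix swapping the $e_{0}$– and $e_{\infty}$–coordinates.

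The conclusion is then linear algebra. Write $c_{T}=\bigl(\langle D(T)\rangle,\langle N(T)\rangle,\langle X(T)\rangle\bigr)^{T}$ and $c_{S}$ analogously, so that the coordinate vectors are $(t_{0},t_{\infty},t_{v})^{T}=P^{-1}c_{T}$ and $(s_{0},s_{\infty},s_{v})^{T}=P^{-1}c_{S}$. Then
\[
\langle N(T+S)\rangle=(P^{-1}c_{T})^{T}\,M\,(P^{-1}c_{S})=c_{T}^{T}\,\bigl(P^{-T}MP^{-1}\bigr)\,c_{S}.
\]
Since $Q$ is symmetric and $\Pi^{T}=\Pi^{-1}=\Pi$, and since $\Pi Q\Pi=Q$ (both $I$ and $J$ are fixed under conjugation by $\Pi$), we get $P^{-T}MP^{-1}=Q^{-1}\Pi\,Q\,\Pi\,Q^{-1}=Q^{-1}$. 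Finally $Q^{-1}=\tfrac{1}{\mu-1}I-\tfrac{1}{(\mu-1)(\mu+2)}J$, whose diagonal entries equal $\tfrac{\mu+1}{(\mu-1)(\mu+2)}=\alpha$ and whose off–diagonal entries equal $\tfrac{-1}{(\mu-1)(\mu+2)}=\beta$; hence $P^{-T}MP^{-1}=\mathcal{A}$, which is exactly the asserted identity.

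I expect the real obstacle to be the rank–three reduction in the first step: making precise that, after absorbing disjoint loops as powers of $\mu$, the bracket of an arbitrary virtual $(2,2)$–tangle always lies in the span of $e_{0},e_{\infty},e_{v}$ requires a careful use of the detour move to remove all virtual crossings not forced by the boundary pairing. The loop counts that produce $M$ and $P$ are elementary, but they must be carried out with the conventions for the closures $D,N,X$ and for the tangle sum $T+S$ fixed consistently; everything after that is the routine inversion of $(\mu-1)I+J$.
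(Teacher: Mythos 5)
Your proposal is correct. Note that the paper itself gives no proof of this lemma: it is quoted from Horiuchi--Komura--Ohyama--Shimozawa \cite{b}, so there is nothing internal to compare against; your argument is essentially the standard skein-theoretic derivation underlying that reference. The two pillars check out: (i) the state-sum for $\langle\,\cdot\,\rangle$ is bilinear in the tangle factors, and each classically crossingless state of a $(2,2)$-tangle contributes only through its boundary pairing (one of $e_0,e_\infty,e_v$) and its number of internal loops, each loop giving a factor $\mu$; (ii) the loop counts give exactly $M=(\mu-1)I+J$ and $P=Q\Pi$ with $Q=M$, and since $P$ is symmetric and $\Pi Q\Pi=Q$, one gets $P^{-T}MP^{-1}=Q^{-1}=\frac{1}{\mu-1}I-\frac{1}{(\mu-1)(\mu+2)}J=\mathcal{A}$, whose entries are precisely $\alpha$ and $\beta$ (I verified the closure brackets of $e_0,e_\infty,e_v$ and the products $N(e_i+e_j)$ against Figs.~\ref{fig23}--\ref{fig24}, and spot-checked the identity on $T=S=e_0$ and $T=e_0$, $S=e_\infty$). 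Two small remarks: your phrase about pushing \emph{all} virtual crossings off the tangle disk is an overstatement when the boundary pairing is the crossed one, but this is harmless because for the bracket only the pairing type and the loop count matter, which is all your factorization uses; and any residual ambiguity in which closure Fig.~\ref{fig23} calls $D$, $N$ or $X$ is also benign, since $\mathcal{A}$ is invariant under simultaneous permutation of the three coordinates, though the matrix $M$ itself does depend on using the $N$-closure of $T+S$, so that convention must be fixed as you did.
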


Now coming back to the proof of Theorem~\ref{1}, observe that $VK_n$ can be written as $N(T+S_n)$, where $T$ and $S_n$ are the tangles shown in Fig.~\ref{fig25} and Fig.~\ref{fig26} respectively.
\begin{figure}
\unitlength=.24mm
\centering 
\begin{picture}(90,90)(0,0)
\thicklines
\qbezier(0,0)(0,0)(90,0)
\qbezier(0,0)(0,0)(0,30)
\qbezier(0,50)(0,50)(0,80)
\qbezier(0,80)(0,80)(90,80)
\qbezier(0,30)(0,30)(20,50)
\qbezier(0,50)(0,50)(6,44)
\qbezier(14,36)(20,30)(20,30)
\qbezier(20,30)(20,30)(40,50)
\qbezier(20,50)(20,50)(40,30)
\qbezier(40,30)(40,30)(60,50)
\qbezier(40,50)(40,50)(46,44)
\qbezier(54,36)(60,30)(60,30)
\qbezier(60,30)(60,30)(80,50)
\qbezier(60,50)(60,50)(80,30)
\qbezier(80,50)(80,50)(80,70)
\qbezier(80,70)(80,70)(90,70)
\qbezier(80,30)(80,30)(80,10)
\qbezier(80,10)(80,10)(90,10)
\put(30,40){\circle{6}}
\put(70,40){\circle{6}}
%%%
\end{picture}
\caption{Tangle $T$.} \label{fig25} 
\end{figure}
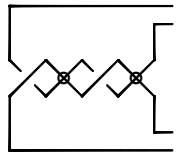
%%%

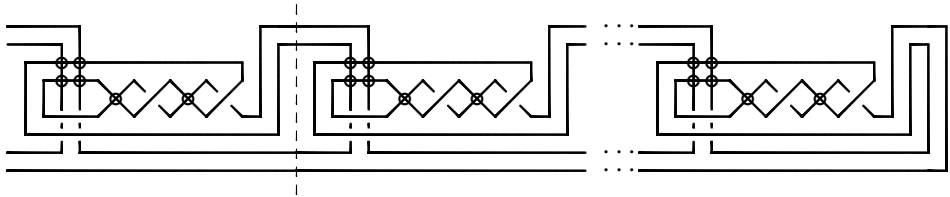
\begin{figure}
\unitlength=.24mm
\centering 
\begin{picture}(0,90)(0,0)
\put(-345,0){\begin{picture}(0,0)
\thicklines 
%%%
{\thinlines
\qbezier(250,-13)(250,-13)(250,-8)
\qbezier(250,-3)(250,-3)(250,2)
\qbezier(250,7)(250,7)(250,12)
\qbezier(250,17)(250,17)(250,22)
\qbezier(250,27)(250,27)(250,32)
\qbezier(250,37)(250,37)(250,42)
\qbezier(250,47)(250,47)(250,52)
\qbezier(250,57)(250,57)(250,62)
\qbezier(250,67)(250,67)(250,72)
\qbezier(250,77)(250,77)(250,82)
\qbezier(250,87)(250,87)(250,92)
}
\qbezier(90,0)(90,0)(250,0)
\qbezier(90,10)(90,10)(120,10)
\qbezier(120,10)(120,10)(120,16)
\qbezier(120,24)(120,24)(120,26)
\qbezier(120,34)(120,34)(120,70)
\qbezier(120,70)(120,70)(90,70)
\qbezier(90,80)(90,80)(130,80)
\qbezier(130,80)(130,80)(130,34)
\qbezier(130,26)(130,26)(130,24)
\qbezier(130,16)(130,16)(130,10)
\qbezier(130,10)(130,10)(250,10)
\qbezier(100,20)(100,20)(100,60)
\qbezier(100,20)(100,20)(240,20)
\qbezier(240,20)(240,20)(240,70)
\qbezier(240,70)(240,70)(250,70)
\qbezier(100,20)(100,20)(100,60)
\qbezier(100,60)(100,60)(220,60)
\qbezier(110,30)(110,30)(110,50)
\qbezier(110,30)(110,30)(140,30)
\qbezier(110,50)(110,50)(140,50)
\put(120,50){\circle{6}}
\put(120,60){\circle{6}}
\put(130,50){\circle{6}}
\put(130,60){\circle{6}}
\qbezier(140,30)(140,30)(160,50)
\qbezier(140,50)(140,50)(160,30)
\qbezier(160,30)(160,30)(180,50)
\qbezier(160,50)(160,50)(166,44)
\qbezier(174,36)(180,30)(180,30)
\qbezier(180,30)(180,30)(200,50)
\qbezier(180,50)(180,50)(200,30)
\qbezier(200,30)(200,30)(220,50)
\qbezier(200,50)(200,50)(206,44)
\qbezier(214,36)(220,30)(220,30)
\put(150,40){\circle{6}}
\put(190,40){\circle{6}}
\qbezier(220,50)(220,50)(220,60)
\qbezier(220,30)(220,30)(230,30)
\qbezier(230,30)(230,30)(230,80)
\qbezier(230,80)(230,80)(250,80)
\end{picture}}
\put(-185,0){\begin{picture}(0,100)
\thicklines
\qbezier(90,0)(90,0)(250,0)
\qbezier(90,10)(90,10)(120,10)
\qbezier(120,10)(120,10)(120,16)
\qbezier(120,24)(120,24)(120,26)
\qbezier(120,34)(120,34)(120,70)
\qbezier(120,70)(120,70)(90,70)
\qbezier(80,80)(80,80)(130,80)
\qbezier(130,80)(130,80)(130,34)
\qbezier(130,26)(130,26)(130,24)
\qbezier(130,16)(130,16)(130,10)
\qbezier(130,10)(130,10)(250,10)
\qbezier(100,20)(100,20)(100,60)
\qbezier(100,20)(100,20)(240,20)
\qbezier(240,20)(240,20)(240,70)
\qbezier(240,70)(240,70)(250,70)
\qbezier(100,20)(100,20)(100,60)
\qbezier(100,60)(100,60)(220,60)
\qbezier(110,30)(110,30)(110,50)
\qbezier(110,30)(110,30)(140,30)
\qbezier(110,50)(110,50)(140,50)
\put(120,50){\circle{6}}
\put(120,60){\circle{6}}
\put(130,50){\circle{6}}
\put(130,60){\circle{6}}
\qbezier(140,30)(140,30)(160,50)
\qbezier(140,50)(140,50)(160,30)
\qbezier(160,30)(160,30)(180,50)
\qbezier(160,50)(160,50)(166,44)
\qbezier(174,36)(180,30)(180,30)
\qbezier(180,30)(180,30)(200,50)
\qbezier(180,50)(180,50)(200,30)
\qbezier(200,30)(200,30)(220,50)
\qbezier(200,50)(200,50)(206,44)
\qbezier(214,36)(220,30)(220,30)
\put(150,40){\circle{6}}
\put(190,40){\circle{6}}
\qbezier(220,50)(220,50)(220,60)
\qbezier(220,30)(220,30)(230,30)
\qbezier(230,30)(230,30)(230,80)
\qbezier(230,80)(230,80)(250,80)
\put(270,80){\makebox(0,0)[cc]{$\cdots$}}
\put(270,70){\makebox(0,0)[cc]{$\cdots$}}
\put(270,10){\makebox(0,0)[cc]{$\cdots$}}
\put(270,0){\makebox(0,0)[cc]{$\cdots$}}
\end{picture}}
\put(5,0){\begin{picture}(0,100)
\thicklines
\qbezier(90,0)(90,0)(250,0)
\qbezier(90,10)(90,10)(120,10)
\qbezier(120,10)(120,10)(120,16)
\qbezier(120,24)(120,24)(120,26)
\qbezier(120,34)(120,34)(120,70)
\qbezier(120,70)(120,70)(90,70)
\qbezier(90,80)(90,80)(130,80)
\qbezier(130,80)(130,80)(130,34)
\qbezier(130,26)(130,26)(130,24)
\qbezier(130,16)(130,16)(130,10)
\qbezier(130,10)(130,10)(250,10)
\qbezier(100,20)(100,20)(100,60)
\qbezier(100,20)(100,20)(240,20)
\qbezier(240,20)(240,20)(240,70)
\qbezier(240,70)(240,70)(250,70)
\qbezier(100,20)(100,20)(100,60)
\qbezier(100,60)(100,60)(220,60)
\qbezier(110,30)(110,30)(110,50)
\qbezier(110,30)(110,30)(140,30)
\qbezier(110,50)(110,50)(140,50)
\put(120,50){\circle{6}}
\put(120,60){\circle{6}}
\put(130,50){\circle{6}}
\put(130,60){\circle{6}} 
\qbezier(140,30)(140,30)(160,50)
\qbezier(140,50)(140,50)(160,30)
\qbezier(160,30)(160,30)(180,50)
\qbezier(160,50)(160,50)(166,44)
\qbezier(174,36)(180,30)(180,30)
\qbezier(180,30)(180,30)(200,50)
\qbezier(180,50)(180,50)(200,30)
\qbezier(200,30)(200,30)(220,50)
\qbezier(200,50)(200,50)(206,44)
\qbezier(214,36)(220,30)(220,30)
\put(150,40){\circle{6}}
\put(190,40){\circle{6}}
\qbezier(220,50)(220,50)(220,60)
\qbezier(220,30)(220,30)(230,30)
\qbezier(230,30)(230,30)(230,80)
\qbezier(230,80)(230,80)(250,80)
\qbezier(250,10)(250,10)(250,70)
\qbezier(250,0)(250,0)(260,0)
\qbezier(250,80)(250,80)(260,80)
\qbezier(260,0)(260,0)(260,80)
\end{picture}}
\end{picture}
\caption{Tangle $S_{n}$.} \label{fig26} 
\end{figure}
%%% 

From Lemma~\ref{3}, we have
\begin{eqnarray}\label{10} 
\langle VK_n \rangle & = & \langle N ( T + S_n ) \rangle   \cr 
&= & ( \langle D ( T ) \rangle , \langle N ( T ) \rangle , \langle X ( T ) \rangle ) \cdot \mathcal { A } \cdot ( \langle D ( S_n ) \rangle, \langle N ( S_n ) \rangle,  \langle X ( S_n ) \rangle)^{T},
\end{eqnarray}
where $\mathcal{A}$ is the matrix given in Lemma~\ref{3}. Further, $D(S_n)$ can be seen as $N$-closure of sum of two tangles by breaking $S_n$ at the dashed line shown in the diagram (Fig.~\ref{fig26}). Thus we have 
\begin{equation}\label{6}
\langle D ( S _ { n })\rangle = ( \langle D_{1} \rangle , \langle D_{2}\rangle, \langle D_{3}\rangle ) \cdot \mathcal { A } \cdot (  \langle D ( S_{n-1})\rangle, \langle N (S_{n-1}) \rangle, \langle X(S_{n-1}) \rangle )^{T},
\end{equation}
where $D_1, D_2$ and $ D_3$ are the virtual links shown in Fig.~\ref{fig27}.
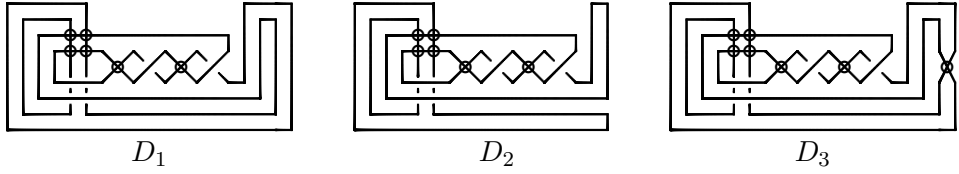
\begin{figure}
\unitlength=.21mm
\centering 
\begin{picture}(0,100)(0,-10)
\put(-220,0){\begin{picture}(140,0)(170,-10)
\thicklines 
%%%
\put(170,-15){\makebox(0,0)[cc]{$D_{1}$}}
\qbezier(80,0)(80,0)(80,80)
\qbezier(90,10)(90,10)(90,70)
\qbezier(80,0)(80,0)(250,0)
\qbezier(90,10)(90,10)(120,10)
\qbezier(120,10)(120,10)(120,16)
\qbezier(120,24)(120,24)(120,26)
\qbezier(120,34)(120,34)(120,70)
\qbezier(120,70)(120,70)(90,70)
\qbezier(80,80)(80,80)(130,80)
\qbezier(130,80)(130,80)(130,34)
\qbezier(130,26)(130,26)(130,24)
\qbezier(130,16)(130,16)(130,10)
\qbezier(130,10)(130,10)(250,10)
\qbezier(100,20)(100,20)(100,60)
\qbezier(100,20)(100,20)(240,20)
\qbezier(240,20)(240,20)(240,70)
\qbezier(240,70)(240,70)(250,70)
\qbezier(100,20)(100,20)(100,60)
\qbezier(100,60)(100,60)(220,60)
\qbezier(110,30)(110,30)(110,50)
\qbezier(110,30)(110,30)(140,30)
\qbezier(110,50)(110,50)(140,50)
\put(120,50){\circle{6}}
\put(120,60){\circle{6}}
\put(130,50){\circle{6}}
\put(130,60){\circle{6}}
\qbezier(140,30)(140,30)(160,50)
\qbezier(140,50)(140,50)(160,30)
\qbezier(160,30)(160,30)(180,50)
\qbezier(160,50)(160,50)(166,44)
\qbezier(174,36)(180,30)(180,30)
\qbezier(180,30)(180,30)(200,50)
\qbezier(180,50)(180,50)(200,30)
\qbezier(200,30)(200,30)(220,50)
\qbezier(200,50)(200,50)(206,44)
\qbezier(214,36)(220,30)(220,30)
\put(150,40){\circle{6}}
\put(190,40){\circle{6}}
\qbezier(220,50)(220,50)(220,60)
\qbezier(220,30)(220,30)(230,30)
\qbezier(230,30)(230,30)(230,80)
\qbezier(230,80)(230,80)(250,80)
\qbezier(250,10)(250,10)(250,70)
\qbezier(250,0)(250,0)(260,0)
\qbezier(250,80)(250,80)(260,80)
\qbezier(260,0)(260,0)(260,80)
\end{picture}}
\put(0,0){\begin{picture}(140,0)(170,-10)
\put(170,-15){\makebox(0,0)[cc]{$D_{2}$}}
\thicklines 
\qbezier(80,0)(80,0)(80,80)
\qbezier(90,10)(90,10)(90,70)
\qbezier(80,0)(80,0)(240,0)
\qbezier(240,0)(240,0)(240,10)
\qbezier(90,10)(90,10)(120,10)
\qbezier(120,10)(120,10)(120,16)
\qbezier(120,24)(120,24)(120,26)
\qbezier(120,34)(120,34)(120,70)
\qbezier(120,70)(120,70)(90,70)
\qbezier(80,80)(80,80)(130,80)
\qbezier(130,80)(130,80)(130,34)
\qbezier(130,26)(130,26)(130,24)
\qbezier(130,16)(130,16)(130,10)
\qbezier(130,10)(130,10)(240,10)
\qbezier(100,20)(100,20)(100,60)
\qbezier(100,20)(100,20)(240,20)
\qbezier(240,20)(240,20)(240,80)
\qbezier(100,20)(100,20)(100,60)
\qbezier(100,60)(100,60)(220,60)
\qbezier(110,30)(110,30)(110,50)
\qbezier(110,30)(110,30)(140,30)
\qbezier(110,50)(110,50)(140,50)
%%%
\put(120,50){\circle{6}}
\put(120,60){\circle{6}}
\put(130,50){\circle{6}}
\put(130,60){\circle{6}} 
\qbezier(140,30)(140,30)(160,50)
\qbezier(140,50)(140,50)(160,30)
\qbezier(160,30)(160,30)(180,50)
\qbezier(160,50)(160,50)(166,44)
\qbezier(174,36)(180,30)(180,30)
\qbezier(180,30)(180,30)(200,50)
\qbezier(180,50)(180,50)(200,30)
\qbezier(200,30)(200,30)(220,50)
\qbezier(200,50)(200,50)(206,44)
\qbezier(214,36)(220,30)(220,30)
\put(150,40){\circle{6}}
\put(190,40){\circle{6}}
\qbezier(220,50)(220,50)(220,60)
\qbezier(220,30)(220,30)(230,30)
\qbezier(230,30)(230,30)(230,80)
\qbezier(230,80)(230,80)(240,80)
\end{picture}}
\put(200,0){\begin{picture}(140,0)(170,-10)
\put(170,-15){\makebox(0,0)[cc]{$D_{3}$}}
\thicklines 
%%%
\qbezier(80,0)(80,0)(80,80)
\qbezier(90,10)(90,10)(90,70)
\qbezier(80,0)(80,0)(250,0)
\qbezier(90,10)(90,10)(120,10)
\qbezier(120,10)(120,10)(120,16)
\qbezier(120,24)(120,24)(120,26)
\qbezier(120,34)(120,34)(120,70)
\qbezier(120,70)(120,70)(90,70)
\qbezier(80,80)(80,80)(130,80)
\qbezier(130,80)(130,80)(130,34)
\qbezier(130,26)(130,26)(130,24)
\qbezier(130,16)(130,16)(130,10)
\qbezier(130,10)(130,10)(250,10)
\qbezier(100,20)(100,20)(100,60)
\qbezier(100,20)(100,20)(240,20)
\qbezier(240,20)(240,20)(240,70)
\qbezier(240,70)(240,70)(250,70)
\qbezier(100,20)(100,20)(100,60)
\qbezier(100,60)(100,60)(220,60)
\qbezier(110,30)(110,30)(110,50)
\qbezier(110,30)(110,30)(140,30)
\qbezier(110,50)(110,50)(140,50)
\put(120,50){\circle{6}}
\put(120,60){\circle{6}}
\put(130,50){\circle{6}}
\put(130,60){\circle{6}}
\qbezier(140,30)(140,30)(160,50)
\qbezier(140,50)(140,50)(160,30)
\qbezier(160,30)(160,30)(180,50)
\qbezier(160,50)(160,50)(166,44)
\qbezier(174,36)(180,30)(180,30)
\qbezier(180,30)(180,30)(200,50)
\qbezier(180,50)(180,50)(200,30)
\qbezier(200,30)(200,30)(220,50)
\qbezier(200,50)(200,50)(206,44)
\qbezier(214,36)(220,30)(220,30)
\put(150,40){\circle{6}}
\put(190,40){\circle{6}}
\qbezier(220,50)(220,50)(220,60)
\qbezier(220,30)(220,30)(230,30)
\qbezier(230,30)(230,30)(230,80)
\qbezier(230,80)(230,80)(250,80)
\qbezier(250,10)(250,10)(250,30)
\qbezier(250,50)(250,50)(250,70)
\qbezier(250,0)(250,0)(260,0)
\qbezier(250,80)(250,80)(260,80)
\qbezier(260,80)(260,80)(260,50)
\qbezier(260,0)(260,0)(260,30)
\qbezier(250,30)(250,30)(260,50)
\qbezier(250,50)(250,50)(260,30)
\put(255,40){\circle{6}}
\end{picture}}
\end{picture}
\caption{Virtual links $D_{1}$, $D_{2}$ and $D_{3}$.} \label{fig27} 
\end{figure}
%%% 
$N(S_n)$ can be similarly seen as $N$-closure of sum of two tangles and we have
\begin{equation}\label{7}
\langle N( S_{n}) \rangle = (\langle D_{4} \rangle,\langle D_{5} \rangle, \langle D_{6} \rangle ) \cdot \mathcal { A } \cdot (\langle D ( S _{n-1}) \rangle, \langle N (S_{n-1}) \rangle, \langle X (S_{n-1}) \rangle)^{T},
\end{equation}
where $ D_4$, $D_5$ and $ D_6$ are the virtual links shown in the Fig.~\ref{fig28}.
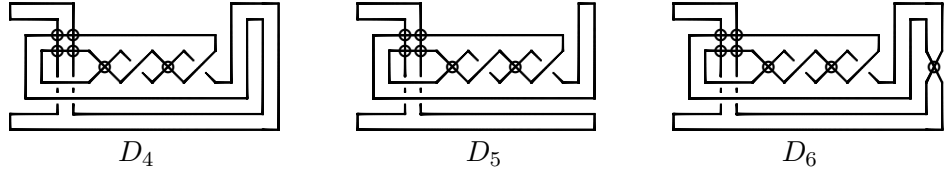
\begin{figure}
\unitlength=.21mm
\centering 
\begin{picture}(0,100)(0,-10)
\put(-220,0){\begin{picture}(140,0)(170,-10)
\thicklines 
%%%
\put(170,-15){\makebox(0,0)[cc]{$D_{4}$}}
\qbezier(90,0)(90,0)(90, 10)
\qbezier(90,70)(90,70)(90,80)
\qbezier(90,0)(90,0)(250,0)
\qbezier(90,10)(90,10)(120,10)
\qbezier(120,10)(120,10)(120,16)
\qbezier(120,24)(120,24)(120,26)
\qbezier(120,34)(120,34)(120,70)
\qbezier(120,70)(120,70)(90,70)
\qbezier(90,80)(90,80)(130,80)
\qbezier(130,80)(130,80)(130,34)
\qbezier(130,26)(130,26)(130,24)
\qbezier(130,16)(130,16)(130,10)
\qbezier(130,10)(130,10)(250,10)
\qbezier(100,20)(100,20)(100,60)
\qbezier(100,20)(100,20)(240,20)
\qbezier(240,20)(240,20)(240,70)
\qbezier(240,70)(240,70)(250,70)
\qbezier(100,20)(100,20)(100,60)
\qbezier(100,60)(100,60)(220,60)
\qbezier(110,30)(110,30)(110,50)
\qbezier(110,30)(110,30)(140,30)
\qbezier(110,50)(110,50)(140,50)
\put(120,50){\circle{6}}
\put(120,60){\circle{6}}
\put(130,50){\circle{6}}
\put(130,60){\circle{6}}
\qbezier(140,30)(140,30)(160,50)
\qbezier(140,50)(140,50)(160,30)
\qbezier(160,30)(160,30)(180,50)
\qbezier(160,50)(160,50)(166,44)
\qbezier(174,36)(180,30)(180,30)
\qbezier(180,30)(180,30)(200,50)
\qbezier(180,50)(180,50)(200,30)
\qbezier(200,30)(200,30)(220,50)
\qbezier(200,50)(200,50)(206,44)
\qbezier(214,36)(220,30)(220,30)
\put(150,40){\circle{6}}
\put(190,40){\circle{6}}
\qbezier(220,50)(220,50)(220,60)
\qbezier(220,30)(220,30)(230,30)
\qbezier(230,30)(230,30)(230,80)
\qbezier(230,80)(230,80)(250,80)
\qbezier(250,10)(250,10)(250,70)
\qbezier(250,0)(250,0)(260,0)
\qbezier(250,80)(250,80)(260,80)
\qbezier(260,0)(260,0)(260,80)
\end{picture}}
\put(0,0){\begin{picture}(140,0)(170,-10)
\put(170,-15){\makebox(0,0)[cc]{$D_{5}$}}
\thicklines 
\qbezier(90,0)(90,0)(90, 10)
\qbezier(90,70)(90,70)(90,80)
\qbezier(90,0)(90,0)(240,0)
\qbezier(240,0)(240,0)(240,10)
\qbezier(90,10)(90,10)(120,10)
\qbezier(120,10)(120,10)(120,16)
\qbezier(120,24)(120,24)(120,26)
\qbezier(120,34)(120,34)(120,70)
\qbezier(120,70)(120,70)(90,70)
\qbezier(90,80)(90,80)(130,80)
\qbezier(130,80)(130,80)(130,34)
\qbezier(130,26)(130,26)(130,24)
\qbezier(130,16)(130,16)(130,10)
\qbezier(130,10)(130,10)(240,10)
\qbezier(100,20)(100,20)(100,60)
\qbezier(100,20)(100,20)(240,20)
\qbezier(240,20)(240,20)(240,80)
\qbezier(100,20)(100,20)(100,60)
\qbezier(100,60)(100,60)(220,60)
\qbezier(110,30)(110,30)(110,50)
\qbezier(110,30)(110,30)(140,30)
\qbezier(110,50)(110,50)(140,50)
%%%
\put(120,50){\circle{6}}
\put(120,60){\circle{6}}
\put(130,50){\circle{6}}
\put(130,60){\circle{6}}
\qbezier(140,30)(140,30)(160,50)
\qbezier(140,50)(140,50)(160,30)
\qbezier(160,30)(160,30)(180,50)
\qbezier(160,50)(160,50)(166,44)
\qbezier(174,36)(180,30)(180,30)
\qbezier(180,30)(180,30)(200,50)
\qbezier(180,50)(180,50)(200,30)
\qbezier(200,30)(200,30)(220,50)
\qbezier(200,50)(200,50)(206,44)
\qbezier(214,36)(220,30)(220,30)
\put(150,40){\circle{6}}
\put(190,40){\circle{6}}
\qbezier(220,50)(220,50)(220,60)
\qbezier(220,30)(220,30)(230,30)
\qbezier(230,30)(230,30)(230,80)
\qbezier(230,80)(230,80)(240,80)
\end{picture}}
\put(200,0){\begin{picture}(140,0)(170,-10)
\put(170,-15){\makebox(0,0)[cc]{$D_{6}$}}
\thicklines 
\qbezier(90,0)(90,0)(90, 10)
\qbezier(90,70)(90,70)(90,80)
\qbezier(90,0)(90,0)(250,0)
\qbezier(90,10)(90,10)(120,10)
\qbezier(120,10)(120,10)(120,16)
\qbezier(120,24)(120,24)(120,26)
\qbezier(120,34)(120,34)(120,70)
\qbezier(120,70)(120,70)(90,70)
\qbezier(90,80)(90,80)(130,80)
\qbezier(130,80)(130,80)(130,34)
\qbezier(130,26)(130,26)(130,24)
\qbezier(130,16)(130,16)(130,10)
\qbezier(130,10)(130,10)(250,10)
\qbezier(100,20)(100,20)(100,60)
\qbezier(100,20)(100,20)(240,20)
\qbezier(240,20)(240,20)(240,70)
\qbezier(240,70)(240,70)(250,70)
\qbezier(100,20)(100,20)(100,60)
\qbezier(100,60)(100,60)(220,60)
\qbezier(110,30)(110,30)(110,50)
\qbezier(110,30)(110,30)(140,30)
\qbezier(110,50)(110,50)(140,50)
\put(120,50){\circle{6}}
\put(120,60){\circle{6}}
\put(130,50){\circle{6}}
\put(130,60){\circle{6}}
\qbezier(140,30)(140,30)(160,50)
\qbezier(140,50)(140,50)(160,30)
\qbezier(160,30)(160,30)(180,50)
\qbezier(160,50)(160,50)(166,44)
\qbezier(174,36)(180,30)(180,30)
\qbezier(180,30)(180,30)(200,50)
\qbezier(180,50)(180,50)(200,30)
\qbezier(200,30)(200,30)(220,50)
\qbezier(200,50)(200,50)(206,44)
\qbezier(214,36)(220,30)(220,30)
\put(150,40){\circle{6}}
\put(190,40){\circle{6}}
\qbezier(220,50)(220,50)(220,60)
\qbezier(220,30)(220,30)(230,30)
\qbezier(230,30)(230,30)(230,80)
\qbezier(230,80)(230,80)(250,80)
\qbezier(250,10)(250,10)(250,30)
\qbezier(250,50)(250,50)(250,70)
\qbezier(250,0)(250,0)(260,0)
\qbezier(250,80)(250,80)(260,80)
\qbezier(260,80)(260,80)(260,50)
\qbezier(260,0)(260,0)(260,30)
\qbezier(250,30)(250,30)(260,50)
\qbezier(250,50)(250,50)(260,30)
\put(255,40){\circle{6}}
\end{picture}}
\end{picture}
\caption{Virtual links $D_{4}$, $D_{5}$ and $D_{6}$.} \label{fig28} 
\end{figure}

Similarly for $X(S_n)$ we have
\begin{equation}\label{8}
\langle X (S_{n}) \rangle = ( \langle D_{7} \rangle, \langle D_{8} \rangle, \langle D_{9} \rangle) \cdot \mathcal {A} \cdot (\langle D (S_{n-1}) \rangle, \langle N(S_{n-1})\rangle, \langle X(S_{n-1}) \rangle)^{T},
\end{equation}
where $ D_7, D_8$ and $D_9$ are the virtual links shown in the Fig.~\ref{fig29}.
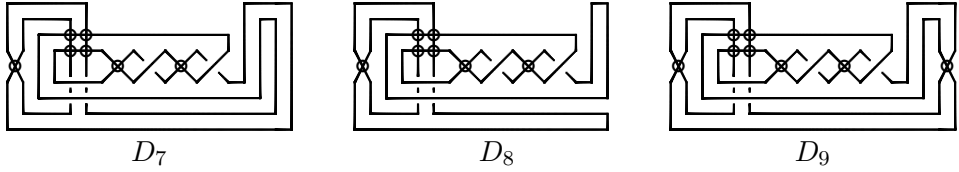
\begin{figure}
\unitlength=.21mm
\centering 
\begin{picture}(0,100)(0,-10)
\put(-220,0){\begin{picture}(140,0)(170,-10)
\thicklines 
%%%
\put(170,-15){\makebox(0,0)[cc]{$D_{7}$}}
\qbezier(80,0)(80,0)(80,30)
\qbezier(80,50)(80,50)(80,80)
\qbezier(90,10)(90,10)(90,30)
\qbezier(90,50)(90,50)(90,70)
\qbezier(80,30)(80,30)(90,50)
\qbezier(80,50)(80,50)(90,30)
\put(85,40){\circle{6}}
\qbezier(80,0)(80,0)(250,0)
\qbezier(90,10)(90,10)(120,10)
\qbezier(120,10)(120,10)(120,16)
\qbezier(120,24)(120,24)(120,26)
\qbezier(120,34)(120,34)(120,70)
\qbezier(120,70)(120,70)(90,70)
\qbezier(80,80)(80,80)(130,80)
\qbezier(130,80)(130,80)(130,34)
\qbezier(130,26)(130,26)(130,24)
\qbezier(130,16)(130,16)(130,10)
\qbezier(130,10)(130,10)(250,10)
\qbezier(100,20)(100,20)(100,60)
\qbezier(100,20)(100,20)(240,20)
\qbezier(240,20)(240,20)(240,70)
\qbezier(240,70)(240,70)(250,70)
\qbezier(100,20)(100,20)(100,60)
\qbezier(100,60)(100,60)(220,60)
\qbezier(110,30)(110,30)(110,50)
\qbezier(110,30)(110,30)(140,30)
\qbezier(110,50)(110,50)(140,50)
\put(120,50){\circle{6}}
\put(120,60){\circle{6}}
\put(130,50){\circle{6}}
\put(130,60){\circle{6}}
\qbezier(140,30)(140,30)(160,50)
\qbezier(140,50)(140,50)(160,30)
\qbezier(160,30)(160,30)(180,50)
\qbezier(160,50)(160,50)(166,44)
\qbezier(174,36)(180,30)(180,30)
\qbezier(180,30)(180,30)(200,50)
\qbezier(180,50)(180,50)(200,30)
\qbezier(200,30)(200,30)(220,50)
\qbezier(200,50)(200,50)(206,44)
\qbezier(214,36)(220,30)(220,30)
\put(150,40){\circle{6}}
\put(190,40){\circle{6}}
\qbezier(220,50)(220,50)(220,60)
\qbezier(220,30)(220,30)(230,30)
\qbezier(230,30)(230,30)(230,80)
\qbezier(230,80)(230,80)(250,80)
\qbezier(250,10)(250,10)(250,70)
\qbezier(250,0)(250,0)(260,0)
\qbezier(250,80)(250,80)(260,80)
\qbezier(260,0)(260,0)(260,80)
\end{picture}}
\put(0,0){\begin{picture}(140,0)(170,-10)
\put(170,-15){\makebox(0,0)[cc]{$D_{8}$}}
\thicklines 
\qbezier(80,0)(80,0)(80,30)
\qbezier(80,50)(80,50)(80,80)
\qbezier(90,10)(90,10)(90,30)
\qbezier(90,50)(90,50)(90,70)
\qbezier(80,30)(80,30)(90,50)
\qbezier(80,50)(80,50)(90,30)
\put(85,40){\circle{6}}
\qbezier(80,0)(80,0)(240,0)
\qbezier(240,0)(240,0)(240,10)
\qbezier(90,10)(90,10)(120,10)
\qbezier(120,10)(120,10)(120,16)
\qbezier(120,24)(120,24)(120,26)
\qbezier(120,34)(120,34)(120,70)
\qbezier(120,70)(120,70)(90,70)
\qbezier(80,80)(80,80)(130,80)
\qbezier(130,80)(130,80)(130,34)
\qbezier(130,26)(130,26)(130,24)
\qbezier(130,16)(130,16)(130,10)
\qbezier(130,10)(130,10)(240,10)
\qbezier(100,20)(100,20)(100,60)
\qbezier(100,20)(100,20)(240,20)
\qbezier(240,20)(240,20)(240,80)
\qbezier(100,20)(100,20)(100,60)
\qbezier(100,60)(100,60)(220,60)
\qbezier(110,30)(110,30)(110,50)
\qbezier(110,30)(110,30)(140,30)
\qbezier(110,50)(110,50)(140,50)
%%%
\put(120,50){\circle{6}}
\put(120,60){\circle{6}}
\put(130,50){\circle{6}}
\put(130,60){\circle{6}}
\qbezier(140,30)(140,30)(160,50)
\qbezier(140,50)(140,50)(160,30)
\qbezier(160,30)(160,30)(180,50)
\qbezier(160,50)(160,50)(166,44)
\qbezier(174,36)(180,30)(180,30)
\qbezier(180,30)(180,30)(200,50)
\qbezier(180,50)(180,50)(200,30)
\qbezier(200,30)(200,30)(220,50)
\qbezier(200,50)(200,50)(206,44)
\qbezier(214,36)(220,30)(220,30)
\put(150,40){\circle{6}}
\put(190,40){\circle{6}}
\qbezier(220,50)(220,50)(220,60)
\qbezier(220,30)(220,30)(230,30)
\qbezier(230,30)(230,30)(230,80)
\qbezier(230,80)(230,80)(240,80)
\end{picture}}
\put(200,0){\begin{picture}(140,0)(170,-10)
\put(170,-15){\makebox(0,0)[cc]{$D_{9}$}}
\thicklines 
%%%
\qbezier(80,0)(80,0)(80,30)
\qbezier(80,50)(80,50)(80,80)
\qbezier(90,10)(90,10)(90,30)
\qbezier(90,50)(90,50)(90,70)
\qbezier(80,30)(80,30)(90,50)
\qbezier(80,50)(80,50)(90,30)
\put(85,40){\circle{6}}
\qbezier(80,0)(80,0)(250,0)
\qbezier(90,10)(90,10)(120,10)
\qbezier(120,10)(120,10)(120,16)
\qbezier(120,24)(120,24)(120,26)
\qbezier(120,34)(120,34)(120,70)
\qbezier(120,70)(120,70)(90,70)
\qbezier(80,80)(80,80)(130,80)
\qbezier(130,80)(130,80)(130,34)
\qbezier(130,26)(130,26)(130,24)
\qbezier(130,16)(130,16)(130,10)
\qbezier(130,10)(130,10)(250,10)
\qbezier(100,20)(100,20)(100,60)
\qbezier(100,20)(100,20)(240,20)
\qbezier(240,20)(240,20)(240,70)
\qbezier(240,70)(240,70)(250,70)
\qbezier(100,20)(100,20)(100,60)
\qbezier(100,60)(100,60)(220,60)
\qbezier(110,30)(110,30)(110,50)
\qbezier(110,30)(110,30)(140,30)
\qbezier(110,50)(110,50)(140,50)
\put(120,50){\circle{6}}
\put(120,60){\circle{6}}
\put(130,50){\circle{6}}
\put(130,60){\circle{6}}
\qbezier(140,30)(140,30)(160,50)
\qbezier(140,50)(140,50)(160,30)
\qbezier(160,30)(160,30)(180,50)
\qbezier(160,50)(160,50)(166,44)
\qbezier(174,36)(180,30)(180,30)
\qbezier(180,30)(180,30)(200,50)
\qbezier(180,50)(180,50)(200,30)
\qbezier(200,30)(200,30)(220,50)
\qbezier(200,50)(200,50)(206,44)
\qbezier(214,36)(220,30)(220,30)
\put(150,40){\circle{6}}
\put(190,40){\circle{6}}
\qbezier(220,50)(220,50)(220,60)
\qbezier(220,30)(220,30)(230,30)
\qbezier(230,30)(230,30)(230,80)
\qbezier(230,80)(230,80)(250,80)
\qbezier(250,10)(250,10)(250,30)
\qbezier(250,50)(250,50)(250,70)
\qbezier(250,0)(250,0)(260,0)
\qbezier(250,80)(250,80)(260,80)
\qbezier(260,80)(260,80)(260,50)
\qbezier(260,0)(260,0)(260,30)
\qbezier(250,30)(250,30)(260,50)
\qbezier(250,50)(250,50)(260,30)
\put(255,40){\circle{6}}
\end{picture}}
\end{picture}
\caption{Virtual links $D_{7}$, $D_{8}$ and $D_{9}$.} \label{fig29} 
\end{figure}
%%% 

Using equations~(\ref{6}),~(\ref{7}) and~(\ref{8}), we can write
\begin{equation}\label{9}
  \begin{aligned}
        \left( \begin{array} { c } { \left\langle D \left( S _ { n } \right) \right\rangle } \\ { \left\langle N \left( S _ { n } \right) \right\rangle } \\ { \left\langle X \left( S _ { n } \right) \right\rangle } \end{array} \right)  & = \left( \begin{array} { c c c } { \left\langle D _ { 1 } \right\rangle } & { \left\langle D _ { 2 } \right\rangle } & { \left\langle D _ { 3 } \right\rangle } \\ { \left\langle D _ { 4 } \right\rangle } & { \left\langle D _ { 5 } \right\rangle } & { \left\langle D _ { 6 } \right\rangle } \\ { \left\langle D _ { 7 } \right\rangle } & { \left\langle D _ { 8 } \right\rangle } & { \left\langle D _ { 9 } \right\rangle } \end{array} \right) \mathcal { A } \left( \begin{array} { c } { \left\langle D \left( S _ { n - 1 } \right) \right\rangle } \\ { \left\langle N \left( S _ { n - 1 } \right) \right\rangle } \\ { \left\langle X \left( S _ { n - 1 } \right) \right\rangle } \end{array} \right) \\
               & = \mathcal { B } \mathcal { A } \left( \begin{array} { c } { \left\langle D \left( S _ { n - 1 } \right) \right\rangle } \\ { \left\langle N \left( S _ { n - 1 } \right) \right\rangle } \\ { \left\langle X \left( S _ { n - 1 } \right) \right\rangle } \end{array} \right),
 \end{aligned}
\end{equation}
where $\mathcal{B}$ is the $3\times3$-matrix,

\[\mathcal { B } = \left( \begin{array} { c c c } { \left\langle D _ { 1 } \right\rangle } & { \left\langle D _ { 2 } \right\rangle } & { \left\langle D _ { 3 } \right\rangle } \\ { \left\langle D _ { 4 } \right\rangle } & { \left\langle D _ { 5 } \right\rangle } & { \left\langle D _ { 6 } \right\rangle } \\ { \left\langle D _ { 7 } \right\rangle } & { \left\langle D _ { 8 } \right\rangle } & { \left\langle D _ { 9 } \right\rangle } \end{array} \right).\]

We compute the values of $\langle D_j\rangle$ for all $j=1, 2, \ldots, 9$ and we have  
$$
\mathcal { B } = \left( \begin{array} {ccc} 
b_{11} & b_{12} & b_{13} \cr 
b_{21} & b_{22} & b_{23} \cr 
b_{31} & b_{22} & b_{33}  
\end{array} \right), 
$$
where
\begin{eqnarray*} 
b_{11} & = & -A^{12}+1-A^{4}-A^{- 4}, \cr 
b_{12} & = & A^{10}+3A^6+A^2-A^{-2}, \cr 
b_{13} & = & A^{10}-A^6-3A^4-A^2+1+A^{-2},  \cr 
b_{21} & = & A^6, \cr 
b_{22} & = &  -A^{8}-A^4, \cr 
b_{23} & = & A^6, \cr  
b_{31} & = & A^{10}+A^8 -2A^4-A^2+1+A^{-2}, \cr 
b_{32 }& = &  -A^8-A^4-1, \cr 
b_{33} & = & -A^4+A^2+2+A^{-2}-A^{-4}-A^{- 6}.
\end{eqnarray*} 
Computing the values of $\langle D(S_1) \rangle, \langle N(S_1) \rangle$ and $\langle X(S_1) \rangle$ we get
$$
\left( \begin{array} { c } { \left\langle D \left( S _ { 1 } \right) \right\rangle } \\ { \left\langle N \left( S _ { 1 } \right) \right\rangle } \\ { \left\langle X \left( S _ { 1 } \right) \right\rangle } \end{array} \right) = \left( \begin{array} { c } { - A ^ { 12 } + 1 - A ^ { 4 } - A ^ { - 4 } } \\ {  A ^ { 6 } } \\ { A ^ { 10} + A^ {8}-2 A^{4}-A^2 + 1+ A^{-2} } \end{array} \right) 
$$
and
$$
( \langle D ( T ) \rangle , \langle N ( T ) \rangle , \langle X ( T ) \rangle ) = ( A^6 ,-A^4 - A^{-4} ,-A^4 +1+ A^{-2} ).
$$
Also 
$$
\mathcal A  = \frac{1}{A^4 - A^2 -A^{-2} + A^{-4}} \left( \begin{array} {ccc} 
a_{11} & a_{12} & a_{13} \\ 
a_{21} & a_{22} & a_{23} \\ 
a_{31} & a_{32} & a_{33}
 \end{array} \right). 
$$
where 
$$
\begin{gathered} 
a_{11}  =  a_{22} = a_{33} = - A^{2}+1-A^{-2},  \cr 
a_{12} = a_{13} = a_{21} = a_{23} = a_{31} = a_{32} = -1.
\end{gathered} 
$$
Denote 
$$
\kappa = \frac{1}{A^4 - A^2 -A^{-2} + A^{-4}}. 
$$
By computations, we get
$$
\mathcal {BA} = \kappa \left( \begin{array} {ccc} f_1&f_2&f_3 \\ 
0&f_4&0 \\f_5 &f_6&f_7\end{array} \right),
$$
where
\begin{eqnarray*}
f_1&=&A^ {14}-A^{12}-A^{10}-A^6+2A^4-A^{-4}+A^{-6}, \cr  
f_2 & = & 2(-2A^{8} + 2 A^6 + A ^ { 2 } -1 - A ^ { - 2 } +A^{-4}), \cr 
f_3 &=&  - A ^ { 6 } + A^{-2}, \cr  
f_4 &=& A^{10}- A^{8}- A^4 + A^2, \cr 
f_5 &=& -A^{12} + A ^ { 8 } + A ^ { 6 } + A ^ { 4}-A ^ {2}- A^{-2}+A^{-6}, \cr  
f_6 &=& -2A^8 +2A^6 +2A^4+2A^2-4-A^{-2}+A^{-4}+A^{-6}, \cr 
f_7 &=& -A ^ { 10} + A^6 + A^4 +A^2 - A^{-2} -A^{-4} +A^{-8}  
\end{eqnarray*} 
are Laurent polynomials in the ring $\mathbb{Z} [A,A^{-1}]$. Further we get,
$$ 
\begin{gathered} 
\langle D ( T ) \rangle , \langle N ( T ) \rangle , \langle X ( T ) \rangle) \mathcal{A} \cr = \kappa ( -A^8 +A^6+A^4 -1 -A^{-2} +A^{-4} , A^2 -1 - A^{-4} +A^{-6} , 0 ).
\end{gathered} 
$$
Now for a given matrix $H=(h_{ij})$ where entries $h_{ij} \in \mathbb{Q} [A,A^{-1}]$, construct a new matrix $H_{max}= (\overline{h_{ij}^{*}})$ with its $(i,j)^{th}$ entry defined as,
$$
\begin{array} { c }  {h_{ij}^{*} = \left\{ \begin{array} { l l } \text{max. deg.} f - \text{max. deg.} g, & { \text { if $h_{ij}$}= \frac {f}{g} \hspace{.2cm} \text{for some } f,g \in \mathbb{Z} [A,A^{-1}] } \\ 0,  & { \text { if } h_{ij}= 0 } \end{array} \right. }\end{array}
$$
Whenever $h_{ij}= 0$ in matrix $H$, we put corresponding  $\overline{h_{ij}^{*}}=0$ in the matrix $H_{max}$. We have,
\begin{equation}\label{11}
( \mathcal { B A } ) _ { \mathrm { max } } = \left( \begin{array} { c c c } { \overline { 10 } } & { \overline { 4 } } & {\overline{2}} \\ { 0 } & { \overline { 6} } & { 0 } \\ {\overline{8}} & { \overline { 4 } } & { \overline{6} } \end{array} \right),
\end{equation}
\begin{equation}\label{12}
 (( \langle D ( T ) \rangle , \langle N ( T ) \rangle , \langle X ( T ) \rangle ) \mathcal{A}) _ { \max } = ( \overline { 4 } , \overline { -2 } ,  { 0 } ) 
\end{equation}
and 
\begin{equation}\label{5}
\left( \begin{array} { c } { \left\langle D \left( S _ { 1 } \right) \right\rangle } \\ { \left\langle N \left( S _ { 1 } \right) \right\rangle } \\ { \left\langle X \left( S _ { 1 } \right) \right\rangle } \end{array} \right) _ { \max } = \left( \begin{array} { c } {\overline { 12 }  } \\ { \overline { 6 } } \\ { \overline { 10 } } \end{array} \right).
\end{equation}

\begin{proposition}\label{4}
For each $n \in \mathbb{N}$ we have, 
\[\left( \begin{array} { c } { \left\langle D \left( S _ { n } \right) \right\rangle } \\ { \left\langle N \left( S _ { n } \right) \right\rangle } \\ { \left\langle X \left( S _ { n } \right) \right\rangle } \end{array} \right) _ { \max } = \left( \begin{array} { c } {\overline { 10n + 2 }  } \\ { \overline { 6n } } \\ { \overline { 10n } } \end{array} \right).\]
\end{proposition}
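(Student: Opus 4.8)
The plan is to prove Proposition~\ref{4} by induction on $n$, feeding the one-step recursion~(\ref{9}) together with the already computed matrix $(\mathcal{BA})_{\max}$ from~(\ref{11}) into a bookkeeping of maximal degrees. The base case $n=1$ is nothing but equation~(\ref{5}), which records $\left(\langle D(S_1)\rangle,\langle N(S_1)\rangle,\langle X(S_1)\rangle\right)_{\max}=(\overline{12},\overline{6},\overline{10})$, exactly the asserted triple $(\overline{10n+2},\overline{6n},\overline{10n})$ at $n=1$.

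For the inductive step I would assume the statement for $n-1$, so that $\langle D(S_{n-1})\rangle$, $\langle N(S_{n-1})\rangle$, $\langle X(S_{n-1})\rangle$ are nonzero with maximal degrees $10n-8$, $6n-6$, $10n-10$ respectively. By~(\ref{9}), row $i$ of the product expresses the corresponding entry of $(\langle D(S_n)\rangle,\langle N(S_n)\rangle,\langle X(S_n)\rangle)^{T}$ as $(\mathcal{BA})_{i1}\langle D(S_{n-1})\rangle+(\mathcal{BA})_{i2}\langle N(S_{n-1})\rangle+(\mathcal{BA})_{i3}\langle X(S_{n-1})\rangle$, a sum of at most three products of Laurent polynomials (or ratios thereof). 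Here I would use the two standard properties of $\operatorname{maxdeg}$: it is additive under multiplication, and $\operatorname{maxdeg}(p+q)\le\max\{\operatorname{maxdeg}(p),\operatorname{maxdeg}(q)\}$, with equality whenever the two maximal degrees differ, since then the leading terms cannot cancel. Consequently the maximal degree of the $j$-th summand in row $i$ equals the $(i,j)$-entry of $(\mathcal{BA})_{\max}$ added to the $j$-th entry of the inductive vector $(\overline{10n-8},\overline{6n-6},\overline{10n-10})$.

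Running this through row by row using~(\ref{11}): in the first row the three candidate degrees are $10+(10n-8)=10n+2$, $4+(6n-6)=6n-2$ and $2+(10n-10)=10n-8$, and the first exceeds the other two by $4n+4$ and by $10$, both positive for $n\ge 1$, so the sum has maximal degree exactly $10n+2$; in the second row only one summand survives, since the $(2,1)$ and $(2,3)$ entries of $\mathcal{BA}$ were found to vanish identically, leaving degree $6+(6n-6)=6n$; and in the third row the candidates are $8+(10n-8)=10n$, $4+(6n-6)=6n-2$ and $6+(10n-10)=10n-4$, the first exceeding the others by $4n+2$ and by $4$, so the sum has maximal degree exactly $10n$. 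This is precisely $(\overline{10n+2},\overline{6n},\overline{10n})$, which closes the induction.

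The step I expect to be the main obstacle is the control of leading-term cancellation when the three products in a given row are summed: the entire argument rests on the observation that in every nonzero row one product strictly dominates the others in degree, which is exactly what the elementary inequalities above guarantee. A minor companion point is that the inductive hypothesis must also carry the nonvanishing of all three bracket polynomials of $S_{n-1}$ — needed so that $\operatorname{maxdeg}$ is a genuine integer and the product rule applies — but this is automatic from the finiteness of the asserted maximal degrees.
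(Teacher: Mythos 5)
Your proposal is correct and follows essentially the same route as the paper: induction on $n$ with base case given by equation~(\ref{5}) and inductive step obtained by applying the recursion~(\ref{9}) together with the matrix $(\mathcal{BA})_{\max}$ from~(\ref{11}). The only difference is that you spell out the row-by-row dominance argument ruling out leading-term cancellation (and note the zero entries in the second row), a justification the paper leaves implicit when it multiplies the ``max'' data directly.
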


\begin{proof}
To prove this result we follow induction on $n$. For $n=1$, result follows from equation~(\ref{5}), assume the result is true for $n=k$, i.e.,
\[\left( \begin{array} { c } { \left\langle D \left( S _ { k } \right) \right\rangle } \\ { \left\langle N \left( S _ { k } \right) \right\rangle } \\ { \left\langle X \left( S _ { k } \right) \right\rangle } \end{array} \right) _ { \max } = \left( \begin{array} { c } {\overline { 10k + 2 }  } \\ { \overline { 6k } } \\ { \overline { 10k } } \end{array} \right).\]
For $n={k+1}$ from equation~(\ref{9}), we have 
$$\left( \begin{array} {c} { \left\langle D \left( S _ { k + 1 } \right) \right\rangle } \\ { \left\langle N \left( S _ { k + 1 } \right) \right\rangle } \\ { \left\langle X \left( S _ { k + 1 } \right) \right\rangle } \end{array} \right) = \mathcal { B } \mathcal { A } \left( \begin{array} { c } { \left\langle D \left( S _ { k } \right) \right\rangle } \\ { \left\langle N \left( S _ { k } \right) \right\rangle } \\ { \left\langle X \left( S _ { k } \right) \right\rangle } \end{array} \right).$$
From equation~(\ref{11}),
$$
( \mathcal { B A } ) _ { \mathrm { max } } = \left( \begin{array} { c c c } { \overline { 10 } } & { \overline { 4 } } & {\overline{2}} \\ { 0 } & { \overline { 6} } & { 0 } \\ {\overline{8}} & { \overline { 4 } } & { \overline{6} } \end{array} \right),$$
and from induction hypothesis for the case $n=k$, we have
$$\left( \begin{array} { l } { \left\langle D \left( S _ { k + 1 } \right) \right\rangle } \\ { \left\langle N \left( S _ { k + 1 } \right) \right\rangle } \\ { \left\langle X \left( S _ { k + 1 } \right) \right\rangle } \end{array} \right) _ { \max } = \left( \begin{array} { c } \overline{10+10k+ 2}\\ { \overline { 6+6k } } \\ { \overline { 10+10k } } \end{array} \right)=\left( \begin{array} { c } \overline{10(k+1)+ 2}\\ { \overline { 6(k+1) } } \\ { \overline {10(k+1) } } \end{array} \right).$$
Thus, the result holds true for $n={k+1}$ and hence the proof is complete by induction.
\end{proof} 

Now from equation~(\refeq{10}), we have
$$
\langle VK_n \rangle = (( \langle D ( T ) \rangle , \langle N ( T ) \rangle , \langle X ( T ) \rangle ) \cdot \mathcal { A } ) \cdot   \left( \begin{array} { c } { \langle D ( S_n ) \rangle } \\ { \langle N ( S_n ) \rangle } \\ { \langle X ( S_n ) \rangle } \end{array} \right).$$
Thus, using equation~(\ref{12}) and proposition~(\ref{4}), we have
\begin{equation}
 \begin{aligned}
 (\langle VK_n \rangle)_{max} & = ( ( \langle D ( T ) \rangle , \langle N ( T ) \rangle , \langle X ( T ) \rangle ) \mathcal { A })_{max} \left( \begin{array} { c } { \langle D ( S_n ) \rangle } \\ { \langle N ( S_n ) \rangle } \\ { \langle X ( S_n ) \rangle } \end{array} \right)_{max}\\
 & = ( \overline { 4 } , \overline { -2 } ,  { 0 } ) \left( \begin{array} { c } {\overline { 10n + 2 }  } \\ { \overline { 6n } } \\ { \overline { 10n } } \end{array} \right).
 \end{aligned}
\end{equation}
Therefore, maximum degree of $\langle VK_n \rangle$ is $10n+6$. Also as $f_{VK_n}(A) = \left( - A ^ { 3 } \right) ^ { - w ( VK_n ) } \langle VK_n \rangle$ and writhe $w(VK_n)= 2n+2$, so highest power of $f _ { VK_n } ( A )$ is $10n+6-3(2n+2)$, i.e., $4n$ for each $n \geq 1$. Hence, all virtual knots $VK_n$ for $n \geq 0$ are distinct and thus follows the Theorem.\hfill $\square$
\end{proof}

\smallskip

\noindent \textbf{Proof of Corollary ~\ref{2}. }
For any given virtual knot $L$, consider the family of virtual knots $\{L\sharp VK_0, L\sharp VK_1, \ldots, L\sharp VK_n\}$ obtained by taking connected sum of $L$ with each virtual knot $VK_i$ for $i=0,1, \ldots, n$ constructed in Theorem~\ref{1}. It can be observed that since $VK_0$ is trivial, so $L\sharp VK_0$ is equivalent to $L$ and hence the result follows.\hfill $\square$

\section{Affine index polynomial for the virtual knots $VK_n$}

Let $D$ be an oriented virtual knot diagram and $C(D)$ denotes the set of all classical crossings in $D$. By an \emph{arc} we mean an edge between two consecutive classical crossings along the orientation. Note that the notion of \emph{arc} here is slightly different from the way \emph{arc} was defined while discussing \emph{arc shift move}. The sign of classical crossing $c \in C(D)$, denoted by $\operatorname{sgn}(c)$, is defined as in Fig.~\ref{fig30}. Now assign an integer value to each arc in $D$ in such a way that the labeling around each crossing point of $D$ follows the rule as shown in Fig.~\ref{fig31}.
\begin{figure}[!ht]
\centering 
\unitlength=0.6mm
\begin{picture}(0,30)
\thicklines
\qbezier(-40,10)(-40,10)(-20,30)
\qbezier(-40,30)(-40,30)(-32,22) 
\qbezier(-20,10)(-20,10)(-28,18)
\put(-35,25){\vector(-1,1){5}}
\put(-25,25){\vector(1,1){5}}
\put(-30,0){\makebox(0,0)[cc]{$\operatorname{sgn}(c)=+1$}}
\qbezier(40,10)(40,10)(20,30)
\qbezier(40,30)(40,30)(32,22) 
\qbezier(20,10)(20,10)(28,18)
\put(25,25){\vector(-1,1){5}}
\put(35,25){\vector(1,1){5}}
\put(30,0){\makebox(0,0)[cc]{$\operatorname{sgn}(c)=-1$}}
\end{picture}
\caption{Sign of crossings.} \label{fig30}
\end{figure}
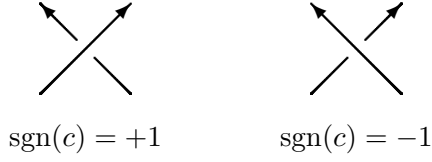

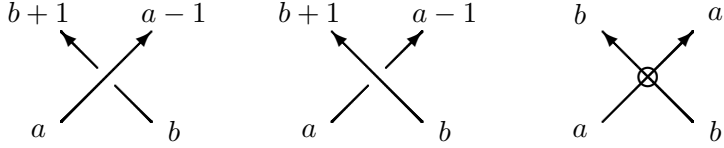
\begin{figure}[!ht]
\centering 
\unitlength=0.6mm
\begin{picture}(0,30)(0,5)
\thicklines
\qbezier(-70,10)(-70,10)(-50,30)
\qbezier(-70,30)(-70,30)(-62,22) 
\qbezier(-50,10)(-50,10)(-58,18)
\put(-65,25){\vector(-1,1){5}}
\put(-55,25){\vector(1,1){5}}
\put(-75,34){\makebox(0,0)[cc]{$b+1$}}
\put(-75,8){\makebox(0,0)[cc]{$a$}}
\put(-45,8){\makebox(0,0)[cc]{$b$}}
\put(-45,34){\makebox(0,0)[cc]{$a-1$}}
\qbezier(10,10)(10,10)(-10,30)
\qbezier(10,30)(10,30)(2,22) 
\qbezier(-10,10)(-10,10)(-2,18)
\put(-5,25){\vector(-1,1){5}}
\put(5,25){\vector(1,1){5}}
\put(-15,34){\makebox(0,0)[cc]{$b+1$}}
\put(-15,8){\makebox(0,0)[cc]{$a$}}
\put(15,8){\makebox(0,0)[cc]{$b$}}
\put(15,34){\makebox(0,0)[cc]{$a-1$}}
\qbezier(70,10)(70,10)(50,30)
\qbezier(70,30)(70,30)(50,10) 
\put(55,25){\vector(-1,1){5}}
\put(65,25){\vector(1,1){5}}
\put(60,20){\circle{4}}
\put(45,34){\makebox(0,0)[cc]{$b$}}
\put(45,8){\makebox(0,0)[cc]{$a$}}
\put(75,8){\makebox(0,0)[cc]{$b$}}
\put(75,34){\makebox(0,0)[cc]{$a$}}
\end{picture}
\caption{Labeling of arcs.} \label{fig31}
\end{figure}

After labeling assign a weight $W_{D}(c)$ to each classical crossing $c$ defined in \cite{p} as 
$$
W_{D}(c) = \operatorname{sgn} (c)(a-b-1). 
$$ 
Then the Kauffman's \emph{affine index polynomial} \cite{p} of virtual knot diagram $D$ is defined as
\begin{equation}
P_{D}(t) = \sum _{c \in C(D)} \operatorname{sgn}(c)(t^{W_{D}(c)}-1) \label{eqn1}
\end{equation} 
where the summation runs over the set $C(D)$ of classical crossings of $D$. The weight $W_{D}(c)$ assigned to each classical crossing $c$ is also called as \emph{index value} of $c$ and denoted by $\operatorname{Ind}(c)$.  

\begin{proposition} \label{prop51} For any $n \geq 1$ affine index polynomial of the virtual knot $VK_{n}$ is equal to $2 - t^{2} - t^{-2}$. 
\end{proposition}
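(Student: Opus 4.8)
The plan is to compute the affine index polynomial $P_{VK_{n}}(t)$ directly from its definition~(\ref{eqn1}). First I would fix the orientation of $VK_{n}$ read off from Fig.~\ref{fig20} and list the classical crossings of $VK_{n}$ together with their signs (Fig.~\ref{fig30}). By construction all classical crossings lie inside the $n+1$ nearly identical blocks $c_{0},c_{1},\dots,c_{n}$ — those in the horizontal $2$-strand braids together with those in the part joining consecutive blocks — so, apart from a bounded collection near the block $c_{0}$ and the closure arcs, they are permuted by the evident translation $c_{i}\mapsto c_{i+1}$.

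The heart of the argument is the integer labeling of the arcs of $VK_{n}$ prescribed by Fig.~\ref{fig31}. I would build this labeling block by block, using that a virtual crossing preserves the two labels while a classical crossing shifts the over/under labels by $\pm 1$, and then verify that the labels on corresponding arcs of $c_{i}$ and $c_{i+1}$ differ by one and the same integer, independent of $i$ — a translational symmetry of the coloring reflecting the fact that, as seen from the two strands running through it, each block realizes a translation. Granting this, the local picture (crossing signs and incident labels, hence $\operatorname{Ind}(c)=a-b-1$) at every classical crossing inside the repeated blocks is the same for all $i$; a single computation on one block then shows that the contribution of that block to the sum in~(\ref{eqn1}) is $0$ (the relevant weights either vanish or cancel within the block). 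Consequently only the bounded set of crossings near $c_{0}$ and the closure survives in~(\ref{eqn1}), and its contribution is manifestly independent of $n$.

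It remains to evaluate that $n$-independent contribution, which I would do by a direct computation of the labeling on $VK_{1}$: there one finds that the surviving crossings contribute $2-t^{2}-t^{-2}$ in total (note this already differs from $P_{VK_{0}}(t)=0$, since $VK_{0}$ is the unknot, so the value $2-t^{2}-t^{-2}$ first appears at $n=1$ and is then constant). Equivalently, the whole statement can be phrased as an induction on $n\ge 1$: the base case $n=1$ is the above direct computation, and the inductive step is exactly the observation that inserting a further block changes neither the indices of the old crossings nor~(\ref{eqn1}), since the new classical crossings contribute $0$.

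The main obstacle will be the labeling bookkeeping. The blocks carry many virtual crossings and the closure routes several strands around the whole diagram, so determining the integer attached to each arc — and in particular proving the translational periodicity that kills the interior crossings — is the genuine work. Once the labeling is known, the vanishing of the repeated contributions and the evaluation of the few surviving crossings are short, and the $n$-independence of $P_{VK_{n}}(t)$ falls out automatically.
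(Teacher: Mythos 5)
Your plan is essentially the paper's proof: the paper builds the arc labeling block by block, observes that the labels (hence indices and signs) are identical in all blocks $c_i$ with $i\geq 2$, where the two index-$2$ crossings have opposite signs and cancel, so only the crossings adjacent to $c_0$ survive (in the paper these are $c_1^2,c_1^3$ with indices $\pm 2$ and sign $-1$), giving $2-t^2-t^{-2}$ independently of $n$. The labeling bookkeeping you defer is exactly what the paper carries out in its figures, so your approach is correct and matches the published argument.
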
 

\begin{proof} 
To compute $P_{VK_n}(t)$ we will use the labeling of diagram $VK_n$ as shown in Fig.~\ref{fig32} and Fig.~\ref{fig33} for parts of it. We start with some arc labelled by $a$ and obtain induced labeling of all other arc by the rule presented in Fig.~\ref{fig31}. Because of the size of the pictures some labels are not presented in Figs.~\ref{fig32} and~\ref{fig33}, but we are sure they can be easy reconstructed by a reader. Note that labels of arcs are identical in all the blocks $c_i$ for $i= 2, \ldots, n$ in $VK_n$.
\begin{figure}
\unitlength=.5mm
\centering 
\begin{picture}(250,90)(0,0)
\put(0,60){\vector(0,-1){5}}
\put(5,35){\vector(-1,-1){5}}
\put(15,35){\vector(1,-1){5}}
\put(45,35){\vector(-1,-1){5}}
\put(55,35){\vector(1,-1){5}}
\put(5,55){\makebox(0,0)[cc]{\tiny ${a}$}}
\put(20,55){\makebox(0,0)[cc]{\tiny ${a+1}$}}
\put(40,55){\makebox(0,0)[cc]{\tiny $a+1$}}
\put(60,55){\makebox(0,0)[cc]{\tiny $a+2$}}
\put(5,25){\makebox(0,0)[cc]{\tiny ${a}$}}
\put(20,25){\makebox(0,0)[cc]{\tiny ${a+1}$}}
\put(40,25){\makebox(0,0)[cc]{\tiny $a+1$}}
\put(60,25){\makebox(0,0)[cc]{\tiny $a+2$}}
\thicklines
\qbezier(0,0)(0,0)(90,0)
\qbezier(0,0)(0,0)(0,30)
\qbezier(0,50)(0,50)(0,80)
\qbezier(0,80)(0,80)(90,80)
\qbezier(0,30)(0,30)(20,50)
\qbezier(0,50)(0,50)(6,44)
\qbezier(14,36)(20,30)(20,30)
\qbezier(20,30)(20,30)(40,50)
\qbezier(20,50)(20,50)(40,30)
\qbezier(40,30)(40,30)(60,50)
\qbezier(40,50)(40,50)(46,44)
\qbezier(54,36)(60,30)(60,30)
\qbezier(60,30)(60,30)(80,50)
\qbezier(60,50)(60,50)(80,30)
\qbezier(80,50)(80,50)(80,70)
\qbezier(80,70)(80,70)(90,70)
\qbezier(80,30)(80,30)(80,10)
\qbezier(80,10)(80,10)(90,10)
\put(30,40){\circle{6}}
\put(70,40){\circle{6}}
%%%
\put(120,40){\vector(0,-1){5}}
\put(130,40){\vector(0,1){5}}
\put(100,10){\vector(-1,0){5}}
\put(190,10){\vector(-1,0){5}}
\qbezier(90,0)(90,0)(250,0)
\qbezier(90,10)(90,10)(120,10)
\qbezier(120,10)(120,10)(120,16)
\qbezier(120,24)(120,24)(120,26)
\qbezier(120,34)(120,34)(120,70)
\qbezier(120,70)(120,70)(90,70)
\qbezier(80,80)(80,80)(130,80)
\qbezier(130,80)(130,80)(130,34)
\qbezier(130,26)(130,26)(130,24)
\qbezier(130,16)(130,16)(130,10)
\qbezier(130,10)(130,10)(250,10)
\qbezier(100,20)(100,20)(100,60)
\qbezier(100,20)(100,20)(240,20)
\qbezier(240,20)(240,20)(240,70)
\qbezier(240,70)(240,70)(250,70)
\qbezier(100,20)(100,20)(100,60)
\qbezier(100,60)(100,60)(220,60)
\qbezier(110,30)(110,30)(110,50)
\qbezier(110,30)(110,30)(140,30)
\qbezier(110,50)(110,50)(140,50)
\put(120,50){\circle{6}}
\put(120,60){\circle{6}}
\put(130,50){\circle{6}}
\put(130,60){\circle{6}}
\put(160,55){\makebox(0,0)[cc]{\tiny $a+2$}}
\put(180,55){\makebox(0,0)[cc]{\tiny ${a+1}$}}
\put(200,55){\makebox(0,0)[cc]{\tiny $a+1$}}
\put(215,55){\makebox(0,0)[cc]{\tiny $a$}} 
\put(160,25){\makebox(0,0)[cc]{\tiny $a+2$}}
\put(180,25){\makebox(0,0)[cc]{\tiny ${a+1}$}}
\put(200,25){\makebox(0,0)[cc]{\tiny $a+1$}}
\put(215,25){\makebox(0,0)[cc]{\tiny $a$}}
\put(140,25){\makebox(0,0)[cc]{\tiny $a+2$}}
\put(110,25){\makebox(0,0)[cc]{\tiny $a+2$}}
\put(100,15){\makebox(0,0)[cc]{\tiny $a+2$}}
\put(190,15){\makebox(0,0)[cc]{\tiny $a$}}
\put(235,25){\makebox(0,0)[cc]{\tiny $a$}}
\put(245,85){\makebox(0,0)[cc]{\tiny $a$}}
\put(245,75){\makebox(0,0)[cc]{\tiny $a$}}
\put(245,15){\makebox(0,0)[cc]{\tiny $a$}}
\put(245,5){\makebox(0,0)[cc]{\tiny $a$}}
\put(165,45){\vector(-1,1){5}}
\put(175,45){\vector(1,1){5}}
\put(205,45){\vector(-1,1){5}}
\put(215,45){\vector(1,1){5}}
\put(140,30){\vector(-1,0){5}}
\put(235,20){\vector(1,0){5}}
\put(245,80){\vector(-1,0){5}}
\put(245,70){\vector(1,0){5}}
\put(245,10){\vector(-1,0){5}}
\put(245,0){\vector(1,0){5}}
\put(15,40){\makebox(0,0)[cc]{\tiny $c_{0}^{1}$}}
\put(55,40){\makebox(0,0)[cc]{\tiny $c_{0}^{2}$}}
\put(175,40){\makebox(0,0)[cc]{\tiny $c_{1}^{5}$}}
\put(215,40){\makebox(0,0)[cc]{\tiny $c_{1}^{6}$}}
\put(115,35){\makebox(0,0)[cc]{\tiny $c_{1}^{1}$}}
\put(135,35){\makebox(0,0)[cc]{\tiny $c_{1}^{2}$}}
\put(115,15){\makebox(0,0)[cc]{\tiny $c_{1}^{3}$}}
\put(135,15){\makebox(0,0)[cc]{\tiny $c_{1}^{4}$}}
\qbezier(140,30)(140,30)(160,50)
\qbezier(140,50)(140,50)(160,30)
\qbezier(160,30)(160,30)(180,50)
\qbezier(160,50)(160,50)(166,44)
\qbezier(174,36)(180,30)(180,30)
\qbezier(180,30)(180,30)(200,50)
\qbezier(180,50)(180,50)(200,30)
\qbezier(200,30)(200,30)(220,50)
\qbezier(200,50)(200,50)(206,44)
\qbezier(214,36)(220,30)(220,30)
\put(150,40){\circle{6}}
\put(190,40){\circle{6}}
\qbezier(220,50)(220,50)(220,60)
\qbezier(220,30)(220,30)(230,30)
\qbezier(230,30)(230,30)(230,80)
\qbezier(230,80)(230,80)(250,80)
\end{picture}
\caption{Labelling in blocks $c_{0}$ and $c_{1}$.} \label{fig32} 
\end{figure}
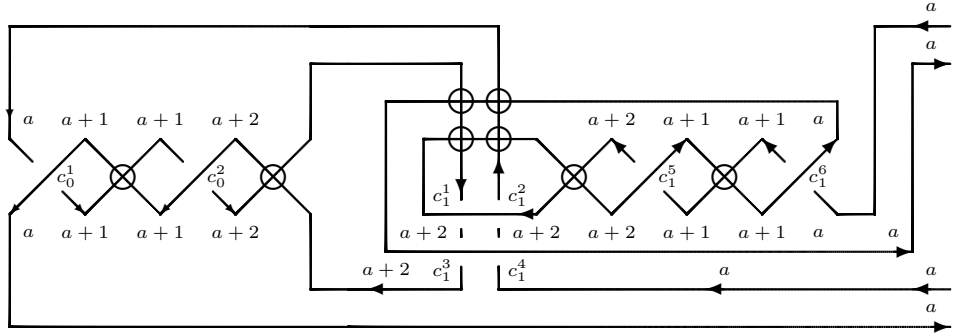
%%%
Denote the classical crossings lying in blocks $c_i$ for $i=0,\ldots, n $ as shown in Figs.~\ref{fig32} and~\ref{fig33} and calculate signs of classical crossings. 

%%%
\begin{figure}
\unitlength=.4mm
\centering 
\begin{picture}(0,90)(0,0)
\put(-245,0){\begin{picture}(0,100)
\thicklines
\put(95,85){\makebox(0,0)[cc]{\tiny $a$}}
\put(95,75){\makebox(0,0)[cc]{\tiny $a$}}
\put(95,15){\makebox(0,0)[cc]{\tiny $a$}}
\put(95,5){\makebox(0,0)[cc]{\tiny $a$}}
\put(95,80){\vector(-1,0){5}} 
\put(95,70){\vector(1,0){5}} 
\put(95,10){\vector(-1,0){5}} 
\put(95,0){\vector(1,0){5}} 
\put(160,55){\makebox(0,0)[cc]{\tiny $a+2$}}
\put(180,55){\makebox(0,0)[cc]{\tiny ${a+1}$}}
\put(200,55){\makebox(0,0)[cc]{\tiny $a+1$}}
\put(215,55){\makebox(0,0)[cc]{\tiny $a$}} 
\put(160,25){\makebox(0,0)[cc]{\tiny $a+2$}}
\put(180,25){\makebox(0,0)[cc]{\tiny ${a+1}$}}
\put(200,25){\makebox(0,0)[cc]{\tiny $a+1$}}
\put(215,25){\makebox(0,0)[cc]{\tiny $a$}}
\put(140,25){\makebox(0,0)[cc]{\tiny $a+2$}}
\put(110,25){\makebox(0,0)[cc]{\tiny $a+2$}}
\put(190,15){\makebox(0,0)[cc]{\tiny $a$}}
\put(235,25){\makebox(0,0)[cc]{\tiny $a$}}
\put(245,85){\makebox(0,0)[cc]{\tiny $a$}}
\put(245,75){\makebox(0,0)[cc]{\tiny $a$}}
\put(245,15){\makebox(0,0)[cc]{\tiny $a$}}
\put(245,5){\makebox(0,0)[cc]{\tiny $a$}}
\put(165,45){\vector(-1,1){5}}
\put(175,45){\vector(1,1){5}}
\put(205,45){\vector(-1,1){5}}
\put(215,45){\vector(1,1){5}}
\put(140,30){\vector(-1,0){5}}
\put(235,20){\vector(1,0){5}}
\put(245,80){\vector(-1,0){5}}
\put(245,70){\vector(1,0){5}}
\put(245,10){\vector(-1,0){5}}
\put(245,0){\vector(1,0){5}}
\put(175,40){\makebox(0,0)[cc]{\tiny $c_{2}^{5}$}}
\put(215,40){\makebox(0,0)[cc]{\tiny $c_{2}^{6}$}}
\put(115,35){\makebox(0,0)[cc]{\tiny $c_{2}^{1}$}}
\put(135,35){\makebox(0,0)[cc]{\tiny $c_{2}^{2}$}}
\put(115,15){\makebox(0,0)[cc]{\tiny $c_{2}^{3}$}}
\put(135,15){\makebox(0,0)[cc]{\tiny $c_{2}^{4}$}}
\qbezier(90,0)(90,0)(250,0)
\qbezier(90,10)(90,10)(120,10)
\qbezier(120,10)(120,10)(120,16)
\qbezier(120,24)(120,24)(120,26)
\qbezier(120,34)(120,34)(120,70)
\qbezier(120,70)(120,70)(90,70)
\qbezier(90,80)(90,80)(130,80)
\qbezier(130,80)(130,80)(130,34)
\qbezier(130,26)(130,26)(130,24)
\qbezier(130,16)(130,16)(130,10)
\qbezier(130,10)(130,10)(250,10)
\qbezier(100,20)(100,20)(100,60)
\qbezier(100,20)(100,20)(240,20)
\qbezier(240,20)(240,20)(240,70)
\qbezier(240,70)(240,70)(250,70)
\qbezier(100,20)(100,20)(100,60)
\qbezier(100,60)(100,60)(220,60)
\qbezier(110,30)(110,30)(110,50)
\qbezier(110,30)(110,30)(140,30)
\qbezier(110,50)(110,50)(140,50)
\put(120,50){\circle{6}}
\put(120,60){\circle{6}}
\put(130,50){\circle{6}}
\put(130,60){\circle{6}}
\qbezier(140,30)(140,30)(160,50)
\qbezier(140,50)(140,50)(160,30)
\qbezier(160,30)(160,30)(180,50)
\qbezier(160,50)(160,50)(166,44)
\qbezier(174,36)(180,30)(180,30)
\qbezier(180,30)(180,30)(200,50)
\qbezier(180,50)(180,50)(200,30)
\qbezier(200,30)(200,30)(220,50)
\qbezier(200,50)(200,50)(206,44)
\qbezier(214,36)(220,30)(220,30)
\put(150,40){\circle{6}}
\put(190,40){\circle{6}}
\qbezier(220,50)(220,50)(220,60)
\qbezier(220,30)(220,30)(230,30)
\qbezier(230,30)(230,30)(230,80)
\qbezier(230,80)(230,80)(250,80)
\put(260,80){\makebox(0,0)[cc]{$\cdots$}}
\put(260,70){\makebox(0,0)[cc]{$\cdots$}}
\put(260,10){\makebox(0,0)[cc]{$\cdots$}}
\put(260,0){\makebox(0,0)[cc]{$\cdots$}}
\end{picture}}
\put(-65,0){\begin{picture}(0,100)
\thicklines
\put(95,85){\makebox(0,0)[cc]{\tiny $a$}}
\put(95,75){\makebox(0,0)[cc]{\tiny $a$}}
\put(95,15){\makebox(0,0)[cc]{\tiny $a$}}
\put(95,5){\makebox(0,0)[cc]{\tiny $a$}}
\put(95,80){\vector(-1,0){5}} 
\put(95,70){\vector(1,0){5}} 
\put(95,10){\vector(-1,0){5}} 
\put(95,0){\vector(1,0){5}} 
\put(160,55){\makebox(0,0)[cc]{\tiny $a+2$}}
\put(180,55){\makebox(0,0)[cc]{\tiny ${a+1}$}}
\put(200,55){\makebox(0,0)[cc]{\tiny $a+1$}}
\put(215,55){\makebox(0,0)[cc]{\tiny $a$}} 
\put(160,25){\makebox(0,0)[cc]{\tiny $a+2$}}
\put(180,25){\makebox(0,0)[cc]{\tiny ${a+1}$}}
\put(200,25){\makebox(0,0)[cc]{\tiny $a+1$}}
\put(215,25){\makebox(0,0)[cc]{\tiny $a$}}
\put(140,25){\makebox(0,0)[cc]{\tiny $a+2$}}
\put(110,25){\makebox(0,0)[cc]{\tiny $a+2$}}
\put(190,15){\makebox(0,0)[cc]{\tiny $a$}}
\put(165,45){\vector(-1,1){5}}
\put(175,45){\vector(1,1){5}}
\put(205,45){\vector(-1,1){5}}
\put(215,45){\vector(1,1){5}}
\put(140,30){\vector(-1,0){5}}
\put(190,10){\vector(-1,0){5}}
\put(175,40){\makebox(0,0)[cc]{\tiny $c_{n}^{5}$}}
\put(215,40){\makebox(0,0)[cc]{\tiny $c_{n}^{6}$}}
\put(115,35){\makebox(0,0)[cc]{\tiny $c_{n}^{1}$}}
\put(135,35){\makebox(0,0)[cc]{\tiny $c_{n}^{2}$}}
\put(115,15){\makebox(0,0)[cc]{\tiny $c_{n}^{3}$}}
\put(135,15){\makebox(0,0)[cc]{\tiny $c_{n}^{4}$}}
\qbezier(90,0)(90,0)(230,0)
\qbezier(90,10)(90,10)(120,10)
\qbezier(120,10)(120,10)(120,16)
\qbezier(120,24)(120,24)(120,26)
\qbezier(120,34)(120,34)(120,70)
\qbezier(120,70)(120,70)(90,70)
\qbezier(90,80)(90,80)(130,80)
\qbezier(130,80)(130,80)(130,34)
\qbezier(130,26)(130,26)(130,24)
\qbezier(130,16)(130,16)(130,10)
\qbezier(130,10)(130,10)(220,10)
\qbezier(100,20)(100,20)(100,60)
\qbezier(100,20)(100,20)(220,20)
\qbezier(100,20)(100,20)(100,60)
\qbezier(100,60)(100,60)(220,60)
\qbezier(110,30)(110,30)(110,50)
\qbezier(110,30)(110,30)(140,30)
\qbezier(110,50)(110,50)(140,50)
\put(120,50){\circle{6}}
\put(120,60){\circle{6}}
\put(130,50){\circle{6}}
\put(130,60){\circle{6}} 
\qbezier(140,30)(140,30)(160,50)
\qbezier(140,50)(140,50)(160,30)
\qbezier(160,30)(160,30)(180,50)
\qbezier(160,50)(160,50)(166,44)
\qbezier(174,36)(180,30)(180,30)
\qbezier(180,30)(180,30)(200,50)
\qbezier(180,50)(180,50)(200,30)
\qbezier(200,30)(200,30)(220,50)
\qbezier(200,50)(200,50)(206,44)
\qbezier(214,36)(220,30)(220,30)
\put(150,40){\circle{6}}
\put(190,40){\circle{6}}
\qbezier(220,50)(220,50)(220,60)
\qbezier(220,30)(220,30)(230,30)
\qbezier(220,10)(220,10)(220,20)
\qbezier(230,0)(230,0)(230,30)
\end{picture}}
\end{picture}
\caption{Labelling in blocks $c_{2}, \ldots, c_{n}$.} \label{fig33} 
\end{figure}
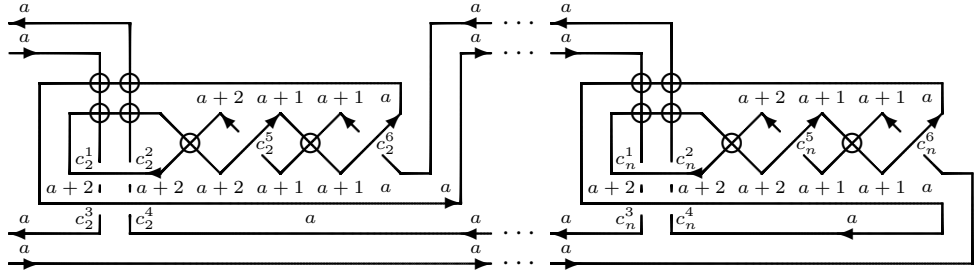
%%%

Since not all labels around crossings $c_{i}^{k}$, $k=1, \ldots, 4$, $i=1, \ldots, n$. are presented in Figs.~\ref{fig32} and~\ref{fig33}, we present labels around these crossings in Figs.~\ref{fig101} and~\ref{fig102}. 
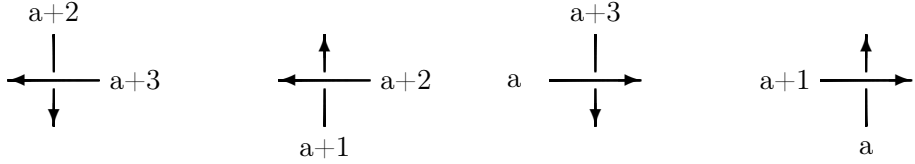
\begin{figure}[!ht]
\centering 
\unitlength=0.6mm
\begin{picture}(0,30)(0,5)
\put(-90,0){\begin{picture}(0,30) 
\thicklines
\qbezier(-10,20)(-10,20)(10,20)
\qbezier(0,30)(0,30)(0,22) 
\qbezier(0,10)(0,10)(0,18)
\put(-5,20){\vector(-1,0){5}}
\put(0,15){\vector(0,-1){5}}
\put(0,35){\makebox(0,0)[cc]{a+2}}
\put(18,20){\makebox(0,0)[cc]{a+3}}
\end{picture}}
\put(-30,0){\begin{picture}(0,30) 
\thicklines
\qbezier(-10,20)(-10,20)(10,20)
\qbezier(0,30)(0,30)(0,22) 
\qbezier(0,10)(0,10)(0,18)
\put(-5,20){\vector(-1,0){5}}
\put(0,25){\vector(0,1){5}}
\put(0,5){\makebox(0,0)[cc]{a+1}}
\put(18,20){\makebox(0,0)[cc]{a+2}}
\end{picture}}
\put(30,0){\begin{picture}(0,30) 
\thicklines
\qbezier(-10,20)(-10,20)(10,20)
\qbezier(0,30)(0,30)(0,22) 
\qbezier(0,10)(0,10)(0,18)
\put(5,20){\vector(1,0){5}}
\put(0,15){\vector(0,-1){5}}
\put(0,35){\makebox(0,0)[cc]{a+3}}
\put(-18,20){\makebox(0,0)[cc]{a}}
\end{picture}}
\put(90,0){\begin{picture}(0,30) 
\thicklines
\qbezier(-10,20)(-10,20)(10,20)
\qbezier(0,30)(0,30)(0,22) 
\qbezier(0,10)(0,10)(0,18)
\put(5,20){\vector(1,0){5}}
\put(0,25){\vector(0,1){5}}
\put(0,5){\makebox(0,0)[cc]{a}}
\put(-18,20){\makebox(0,0)[cc]{a+1}}
\end{picture}}
\end{picture}
\caption{Labels around crossings $c_{1}^{1}$, $c_{1}^{2}$, $c_{1}^{3}$ and $c_{1}^{4}$.} \label{fig101}
\end{figure}

\begin{figure}[!ht]
\centering 
\unitlength=0.6mm
\begin{picture}(0,30)(0,5)
\put(-90,0){\begin{picture}(0,30) 
\thicklines
\qbezier(-10,20)(-10,20)(10,20)
\qbezier(0,30)(0,30)(0,22) 
\qbezier(0,10)(0,10)(0,18)
\put(-5,20){\vector(-1,0){5}}
\put(0,15){\vector(0,-1){5}}
\put(0,35){\makebox(0,0)[cc]{a}}
\put(18,20){\makebox(0,0)[cc]{a+3}}
\end{picture}}
\put(-30,0){\begin{picture}(0,30) 
\thicklines
\qbezier(-10,20)(-10,20)(10,20)
\qbezier(0,30)(0,30)(0,22) 
\qbezier(0,10)(0,10)(0,18)
\put(-5,20){\vector(-1,0){5}}
\put(0,25){\vector(0,1){5}}
\put(0,5){\makebox(0,0)[cc]{a+1}}
\put(18,20){\makebox(0,0)[cc]{a+2}}
\end{picture}}
\put(30,0){\begin{picture}(0,30) 
\thicklines
\qbezier(-10,20)(-10,20)(10,20)
\qbezier(0,30)(0,30)(0,22) 
\qbezier(0,10)(0,10)(0,18)
\put(5,20){\vector(1,0){5}}
\put(0,15){\vector(0,-1){5}}
\put(0,35){\makebox(0,0)[cc]{a+1}}
\put(-18,20){\makebox(0,0)[cc]{a}}
\end{picture}}
\put(90,0){\begin{picture}(0,30) 
\thicklines
\qbezier(-10,20)(-10,20)(10,20)
\qbezier(0,30)(0,30)(0,22) 
\qbezier(0,10)(0,10)(0,18)
\put(5,20){\vector(1,0){5}}
\put(0,25){\vector(0,1){5}}
\put(0,5){\makebox(0,0)[cc]{a}}
\put(-18,20){\makebox(0,0)[cc]{a+1}}
\end{picture}}
\end{picture}
\caption{Labels around crossings $c_{i}^{1}$, $c_{i}^{2}$, $c_{i}^{3}$ and $c_{i}^{4}$ for $i=2, \ldots, n$.} \label{fig102}
\end{figure}
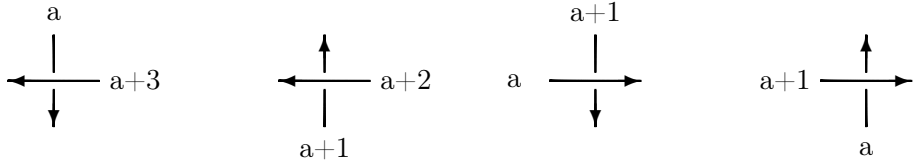

It is easy to see that for the block $c_{0}$ we get  $\operatorname{Ind}(c_0^1)=  \operatorname{Ind}(c_0^2)= 0$. For the block $c_{1}$ we get $\operatorname{Ind}(c_1^1)=  \operatorname{Ind}(c_1^4)=  \operatorname{Ind}(c_1^5)=  \operatorname{Ind}(c_1^6)= 0$ and $\operatorname{Ind}(c_1^2)= 2,  \operatorname{Ind}(c_1^3)= -2$. Analogously, for the blocks $c_i$, $i=2, \ldots, n$, we get $\operatorname{Ind}(c_i^1)=  \operatorname{Ind}(c_i^2)= 2$ and $\operatorname{Ind}(c_i^3)=  \operatorname{Ind}(c_i^4)=  \operatorname{Ind}(c_i^5)=  \operatorname{Ind}(c_i^6)= 0$.

%$i=2,\ldots, n$, have  $\operatorname{sgn}(c_i^1)=  \operatorname{sgn}(c_i^4)=  \operatorname{sgn}(c_i^5)=  \operatorname{sgn}(c_i^6)= +1$ and $\operatorname{sgn}(c_i^2)=  \operatorname{sgn}(c_i^3)= -1$

Summarizing, from all index values computed for classical crossings of $VK_n$ we obtain the affine index polynomial $P_{VK_n}(t)$: 
\begin{equation}
P_{VK_n}(t) = \operatorname{sgn}(c_0^1)(t^0-1)+ \operatorname{sgn}(c_0^2)(t^0-1)+ \sum _{\substack{1\leq i\leq n\\
                  1\leq j\leq 6
                  }} \operatorname{sgn}(c_i^j)(t^{\operatorname{Ind}(c_i^j)}-1)
\end{equation} 
Since crossings with $\operatorname{Ind}(c)=0$ contributes zero to the polynomial, we have 
\begin{equation}
 \begin{aligned}
P_{VK_n}(t) &= \operatorname{sgn}(c_1^2)(t^2-1)+ \operatorname{sgn}(c_1^3)(t^{-2}-1)+ \sum _{\substack{2\leq i\leq n\\
                  j=1,2
                  }} \operatorname{sgn}(c_i^j)(t^{\operatorname{Ind}(c_i^j)}-1)\\
            &= (-1)(t^2-1)+ (-1)(t^{-2}-1)+ \sum _{\substack{2\leq i\leq n
                  }} \left(1(t^2-1)+(-1)(t^2-1)\right) \\
            &= 2-t^2-t^{-2}, 
 \end{aligned}                  
\end{equation} 
where we used $ \operatorname{sgn}(c_1^2) = \operatorname{sgn}(c_1^3) -=1$ and  $\operatorname{sgn}(c_i^1)=1$,  $\operatorname{sgn}(c_i^2) = -1$ for $i=2, \ldots, n$. 

Therefore for all $n= 1,2,\ldots$ virtual knots $VK_n$ have the same affine index polynomial $P_{VK_n}(t) = 2-t^2-t^{-2}$. 
\end{proof}

%\section{Conclusion}

%Affine index polynomial \cite{p} $P_K(t)$ defined by L.~Kauffman is a well known virtual knot invariant. Recently 

As a generalization of the affine index polynomial, in~\cite{o} there were defined two sequences of two-variable polynomial invariants, $L_{K}^{m}(t, \ell)$ and $F_{K}^{m}(t, \ell)$, of virtual knots. These polynomial were called $L$ and $F$ for shortest. They arise from flat virtual knot invariants. $F$-polynomials are stronger invariants than $L$-polynomials; replacing negative powers of $\ell$ in $F$-polynomials by absolute values gives back $L$-polynomials. Except for finitely many values of $m$, $F$-polynomials $F_{K}^{m}(t, \ell)$ turns out to be identical with $P_K(t)$. It was also shown in \cite{o} that for $\ell =1$ both $L_{VK}^{m}(t, 1)$ and $F_{K}^{m}(t, 1)$ for all $m \geq 1$ coincide with affine index polynomial $P_K(t)$. 

%While working on showing the family of virtual knots $\{VK_n\}_{n\geq 0}$ distinct in the proof of Theorem~\ref{1}, authors tried to use Affine index polynomial $P_K(t)$ and $F$-polynomials as well. However it turned out that $P_VK(t)$ fails completely to distinguish $K_n's$ and $F$-polynomials only distinguished knots up to $K_3$.

Since the affine index polynomial didn't distinguish our virtual knots, we tried to do it by its generalization. 
We calculated $F$-Polynomials for the virtual knots $VK_n$,  $n= 1. \ldots, 10$.  It was obtained  that $F_{VK_n}^{2}(t, \ell)$, $F_{VK_n}^{4}(t, \ell)$ and $F_{VK_n}^{6}(t, \ell)$ are the only $F$-polynomials that turn out to be different from $P_{VK_n}(t)$. Polynomials  $F_{VK_n}^{2}(t, \ell)$, $F_{VK_n}^{4}(t, \ell)$ and $F_{VK_n}^{6}(t, \ell)$ for $n=1, 2, 3$ are presented in~Table~\ref{table1}.  For all $n =4, \ldots, 10$ the polynomials are the same as for the case $n=3$. 

\begin{table}[ht]
\caption{$F$-Polynomials for virtual knots $VK_1, \ldots, VK_{10}$.}
{\begin{tabular}{ |p{1.4cm}|p{3.2cm}|p{3.2cm}|p{2.2cm}|} 
\hline
$n$  &$F_{VK_n}^{2}(t, \ell)$  & $F_{VK_n}^{4}(t, \ell)$&$F_{VK_n}^{6}(t, \ell)$ \\
\hline
$1$ & $ \ell^{-2}+\ell^2-t^2\ell^{-2}-\ell^2t^{-2} $ & $ \ell-\ell t^2+\ell^{-1}-\ell^{-1}t^{-2} $  & $2-t^2-t^{-2}$ \\
\hline
$2$ &$\ell^{-1}+\ell^{-2}+2\ell^2-t^{-2}-t^2+t^2\ell^{-1}-t^2\ell^{-4}-2 $ & $\ell+3 \ell^{-1}-t^{-2}-t^2-2$  & $\ell-t^{-2}-2t^2+t^2\ell^{-1}+1$   \\
\hline
$3,\ldots,10$ &$\ell^{-1}+\ell^{-2}+2\ell^2-t^{-2}-t^2\ell^{-2}+t^2\ell^{-3}-t^2\ell^{-4}-2$  &$\ell-\ell t^2+3\ell^{-1}-t^{-2}-\ell^2t^2+\ell^3t^2-2$  & $\ell-t^{-2}-2t^2+t^2\ell^{-1}+1$  \\
\hline
\end{tabular}}
\label{table1}
\end{table}

%Both $F$-polynomials and $P_K(t)$ are defined as a finite sum over all classical crossings of a virtual knot where each classical crossing contributes its part to the polynomial. There are many other polynomial invariants of virtual knots which are defined in similar way by taking finite sum of weights corresponding to each classical crossing in the knot. It remains to be checked whether the family of virtual knots $\{VK_n\}_{n\geq 0}$ pose similar problem to these polynomials as well.

\begin{problem}
Are virtual knots $VK_n$ distinguishable by other polynomial invariants? 
\end{problem}

\begin{problem} 
Does there exist an infinite family of virtual knots $\{K_n\}_{n\geq 0}$ as required in Theorem~\ref{1} which are distinguishable by $F$-polynomials or Affine index polynomial?
 \end{problem}

\end{document}